\newtheorem{thm}{Theorem}[section]
\newtheorem{lem}[thm]{Lemma}
\newtheorem{prop}[thm]{Proposition}
\newtheorem{cor}[thm]{Corollary}
\theoremstyle{definition}
\theoremstyle{remark}
\numberwithin{equation}{section}
\def\N{\mathbb{N}}
\def\R{\mathbb{R}}
\def\ra{\rightarrow}
\def\bs{\backslash}
\def\al{\alpha}
\def\ep{\epsilon}
\def\ka{\kappa}
\def\la{\lambda}
\def\om{\omega}
\def\Om{\Omega}
\def\de{\delta}
\begin{document}

\title[Transition fronts in nonlocal equations]{Transition fronts in nonlocal equations with time heterogeneous ignition nonlinearity}

\author{Wenxian Shen}
\address{Department of Mathematics and Statistics, Auburn University, Auburn, AL 36849}
\email{wenxish@auburn.edu}

\author{Zhongwei Shen}
\address{Department of Mathematics and Statistics, Auburn University, Auburn, AL 36849}
\email{zzs0004@auburn.edu}

\subjclass[2010]{35C07, 35K55, 35K57, 92D25}



\keywords{transition front, nonlocal equation, ignition nonlinearity}

\begin{abstract}
The present paper is devoted to the study of transition fronts in nonlocal reaction-diffusion equations with time heterogeneous nonlinearity of ignition type. It is proven that such an equation admits space monotone transition fronts with finite speed and space regularity in the sense of uniform Lipschitz continuity. Our approach is first constructing a sequence of approximating front-like solutions and then proving that the approximating solutions converge to a transition front. We take advantage of the idea of modified interface location, which allows us to characterize the finite speed of approximating solutions in the absence of space regularity, and leads directly to uniform exponential decaying estimates.
\end{abstract}

\maketitle

\tableofcontents


\section{Introduction}

Since the pioneering works of Fisher (see \cite{Fish37}) and Kolmogorov, Petrowsky and Piscunov (see \cite{KPP37}), the reaction-diffusion equation of the form
\begin{equation}\label{eqn-classical}
u_{t}=u_{xx}+f(t,x,u)
\end{equation}
has attracted a lot of attention, and a vast amount of literature has been carried out to the understanding of front propagation phenomenon in such an equation and its generalized forms. We refer to \cite{ArWe75,ArWe78,FiMc77,FiMc80,Kam76,Uch78,Xi00} and references therein for works in the homogeneous media, i.e., $f(t,x,u)=f(u)$. Recently, there are a lot of progress concerning \eqref{eqn-classical} in the heterogeneous media. We refer to \cite{BeHa02,DiHaZh14,MeRoSi10,MNRR09,Na14,NoRy09,NRRZ12,We02,Zl12} and references therein for works in the space heterogeneous media, i.e., $f(t,x,u)=f(x,u)$, and to \cite{AlBaCh99,NaRo12,Sh99-1,Sh99-2,Sh06,Sh11,ShSh14,ShSh14-1} and references therein for works in the time heterogeneous media, i.e., $f(t,x,u)=f(t,x)$. There are also some works in the space-time heterogeneous media (see e.g. \cite{KoSh14,LiZh1,LiZh2,Na09,Na10, Sh04,Sh11-1,We02}), but it remains widely open. It is worthwhile to point out that the stability results concerning traveling waves and their generalizations, from a PDE viewpoint, are more or less influenced by the work of Fife and McLeod (see \cite{FiMc77}).

Equation \eqref{eqn-classical} is used to model various diffusive processes in biology, ecology, combustion theory and so on. However, when processes with jumps come into play, equation \eqref{eqn-classical} is no longer suitable. More precisely, the random dispersal operator $\partial_{xx}$ is no long suitable. As a substitute, the nonlocal dispersal operator is introduced (see e.g. \cite{Fi03} for some background) and we are now concerned with the following integral equation
\begin{equation}\label{eqn-nonlocal}
u_{t}=J\ast u-u+f(t,x,u),
\end{equation}
where $J$ is a convolution kernel and $[J\ast u](x)=\int_{\R}J(x-y)u(y)dy=\int_{\R}J(y)u(x-y)dy$.

As for \eqref{eqn-nonlocal} in the homogeneous media, traveling waves, i.e., solutions of the form $u(t,x)=\phi(x-ct)$ with $(c,\phi)$ satisfying
\begin{equation*}
J\ast\phi-\phi+c\phi_{x}+f(\phi)=0,\quad \phi(-\infty)=1,\quad\phi(\infty)=0,
\end{equation*}
have been obtained for various nonlinearities, including bistable nonlinearity, monostable nonlinearity, and
 ignition type nonlinearity (see \cite{BaFiReWa97,CaCh04,Ch97,Cov-thesis,CoDu05,CoDu07,Sc80} and references therein). The study of \eqref{eqn-nonlocal} in the heterogeneous media is rather recent and results concerning front propagation are very limited comparing to that of the classical random dispersal case. In \cite{CDM13,DiLi15,ShZh10,ShZh12-1,ShZh12-2}, the authors investigated \eqref{eqn-nonlocal} in the space periodic monostable media, i.e., $f(t,x,u)=f(x,u)$ is of monostable type and periodic in $x$, and proved the existence of spreading speeds and periodic traveling waves.
 In \cite{RaShZh}, the authors studied the existence of spreading speeds and traveling waves of \eqref{eqn-nonlocal} in the space-time periodic monostable media. Very recently, both Berestycki, Coville and Vo (see \cite{BCV14}), and Lim and Zlato\v{s} (see \cite{LiZl14}) investigated \eqref{eqn-nonlocal} in the space heterogeneous monostable media. While Berestycki, Coville and Vo stuided principal eigenvalue, positive solution and long-time behavior of solutions, Lim and Zlato\v{s} proved the existence of transition fronts in the sense of Berestycki-Hamel (see \cite{BeHa07,BeHa12}), that is, an entire solution $u(t,x)$ of \eqref{eqn-classical} or \eqref{eqn-nonlocal} is called a \textit{transition front} if $u(t,-\infty)=1$ and $u(t,\infty)=0$ for any $t\in\R$, and for any $\ep\in(0,1)$ there holds
\begin{equation*}
\sup_{t\in\R}\text{diam}\{x\in\R|\ep\leq u(t,x)\leq 1-\ep\}<\infty.
\end{equation*}

As just mentioned, the existing results for \eqref{eqn-nonlocal} in the heterogeneous media is far from being complete, and therefore, it is expected to explore more results in time or space heterogeneous media of monostable, bistable or ignition type. In the present paper, we study \eqref{eqn-nonlocal} in the time heterogeneous media of ignition type, i.e.,
\begin{equation}\label{main-eqn}
u_{t}=J\ast u-u+f(t,u),\quad (t,x)\in\R\times\R,
\end{equation}
where the convolution kernel $J$ satisfies
\begin{itemize}
\item[\rm(H1)] $J\not\equiv0$, $J\in C^{1}(\R)$, $J(x)=J(-x)\geq0$ for all $x\in\R$, $\int_{\R}J(x)dx=1$, $\int_{\R}|J'(x)|dx<\infty$ and
\begin{equation}\label{decay-convolution-kernel}
\int_{\R}J(x)e^{\la x}dx<\infty,\quad\forall\la>0;
\end{equation}
\end{itemize}
and the time heterogeneous nonlinearity $f(t,u)$ satisfies
\begin{itemize}
\item[\rm(H2)] $f:\R\times[0,\infty)\ra\R$ is continuously differentiable and satisfies the following conditions:
\begin{itemize}
\item there are $\theta\in(0,1)$ (the ignition temperature), $f_{\min}\in C^{1,\al}([0,1])$ and a Lipschitz continuous function $f_{\max}:[0,1]\ra\R$ satisfying
\begin{equation*}
\begin{split}
f_{\min}(u)=0=f_{\max}(u),\,\,&u\in[0,\theta]\cup\{1\},\\
0<f_{\min}(u)\leq f_{\max}(u),\,\,&u\in(\theta,1),\\
f_{\min}'(1)<0
\end{split}
\end{equation*}
such that
\begin{equation*}
f_{\min}(u)\leq f(t,u)\leq f_{\max}(u),\quad (t,u)\in\R\times[0,1].
\end{equation*}

\item $f(t,u)<0$ for $(t,u)\in\R\times(1,\infty)$;

\item first-order partial derivatives are uniformly bounded, i.e.,
\begin{equation*}
\sup_{(t,u)\in\R\times[0,1]}|f_{t}(t,u)|<\infty\quad\text{and}\sup_{(t,u)\in\R\times[0,\infty)}|f_{u}(t,u)|<\infty
\end{equation*}

\item there exists $\tilde{\theta}\in(\theta,1)$ such that
\begin{equation}\label{decreasing-near-1}
\frac{f(t,u_{1})-f(t,u_{2})}{u_{1}-u_{2}}\leq0,\quad\forall u_{1},u_{2}\in[\tilde{\theta},1],\,\,u_{1}\neq u_{2},\,\,t\in\R.
\end{equation}
\end{itemize}
\end{itemize}

We remark that \eqref{decay-convolution-kernel} says that $J$ decays at $\pm\infty$ faster than any exponential function. Typical examples for $J$ are compactly supported functions and the probability density function for standard normal distribution. Under the continuous differentiability assumption of $f$, \eqref{decreasing-near-1} is equivalent to $f_{u}(t,u)\leq0$ for all $(t,u)\in\R\times[\tilde{\theta},1]$. But we will use \eqref{decreasing-near-1} for convenience.

The main result in the present paper is stated in the following theorem.

\begin{thm}\label{thm-transition-front}
Suppose $\rm(H1)$ and $\rm(H2)$. Equation \eqref{main-eqn} admits a transition front $u(t,x)$ that is strictly decreasing in space and uniformly Lipschitz continuous in space, that is,
\begin{equation*}
\sup_{x\neq y, t\in\R}\bigg|\frac{u(t,y)-u(t,x)}{y-x}\bigg|<\infty.
\end{equation*}
Moreover, there exists a continuous differentiable function $X:\R\ra\R$ such that the following hold:
\begin{itemize}
\item[\rm(i)] there exist $c_{\min}^*>0$ and $c_{\max}^*>0$ such that $c_{\min}^*\leq\dot{X}(t)\leq c_{\max}^*$ for all $t\in\R$;

\item[\rm(ii)] there exist two exponents $c_{\pm}>0$ and two shifts $h_{\pm}>0$ such that
\begin{equation*}
\begin{split}
u(t,x+X(t)+h_{+})&\leq e^{-c_{+}x},\quad x\geq0,\\
u(t,x+X(t)-h_{-})&\geq 1-e^{c_{-}x},\quad x\leq0
\end{split}
\end{equation*}
for all $t\in\R$.
\end{itemize}
\end{thm}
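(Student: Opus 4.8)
The plan is to follow the approximation scheme announced in the abstract: for each $n\in\N$ construct a front-like solution $u_{n}(t,x)$ that equals $1$ far to the left of a moving reference point and decays to $0$ far to the right of it, and then pass to the limit $n\to\infty$ to obtain a genuine transition front. First I would fix a family of initial times $t_{n}\to-\infty$ and solve \eqref{main-eqn} forward from $t=t_{n}$ with a suitable front-like initial datum (for instance a shift of a supersolution built from the homogeneous-media ignition wave, or a cutoff of $1$ on a half-line), normalizing the solution at each time by a \emph{modified interface location} $X_{n}(t)$ --- say the leftmost point where $u_{n}(t,\cdot)$ crosses the ignition value $\theta$, or a smoothed version thereof. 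The point of the modified interface location, as the authors stress, is that it is well defined and tractable even without any spatial regularity of $u_{n}$; using the comparison principle for \eqref{eqn-nonlocal} together with the bounds $f_{\min}\le f\le f_{\max}$ one squeezes $u_{n}$ between spatial shifts of the ignition traveling waves $\phi_{\min}$ and $\phi_{\max}$ associated with $f_{\min}$ and $f_{\max}$, which immediately yields (a) two-sided bounds $c_{\min}^{*}\le \dot X_{n}(t)\le c_{\max}^{*}$ on the speed, and (b) the uniform exponential tail estimates of part (ii) --- the right tail from the faster-than-exponential decay \eqref{decay-convolution-kernel} of $J$ and the decay of $\phi_{\max}$, the left tail from $f_{\min}'(1)<0$ and \eqref{decreasing-near-1} near $u=1$.

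Next I would establish the two structural properties needed to take a limit: spatial monotonicity and uniform Lipschitz continuity in $x$. Monotonicity I would get by a sliding argument: if the initial datum is nonincreasing, then for any $h>0$ the function $u_{n}(t,x-h)$ is a supersolution lying above $u_{n}(t,x)$ at $t=t_{n}$, so by the comparison principle $x\mapsto u_{n}(t,x)$ is nonincreasing for all $t>t_{n}$; strict monotonicity then follows from a strong maximum principle for \eqref{eqn-nonlocal} on the region where $u_{n}\in(0,1)$. For the uniform Lipschitz bound --- which is the delicate point, since the nonlocal operator $J\ast u-u$ does not regularize --- I would differentiate or difference the equation in $x$: writing $v=\pa_{x}u_{n}$ (or a difference quotient) one gets $v_{t}=J\ast v-v+f_{u}(t,u_{n})v$, and since $\sup|f_{u}|<\infty$ by (H2) this is a linear nonlocal equation with bounded zeroth-order coefficient; combining the uniform bound $\sup|f_{u}|$, the boundedness of $\int|J'|$ from (H1), and the exponential tail control from the previous step (which confines the transition region), a Gronwall-type estimate in the moving frame gives a bound on $\|\pa_{x}u_{n}(t,\cdot)\|_{\infty}$ uniform in $n$ and in $t$. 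One also needs uniform continuity in $t$ of $u_{n}$ and of $X_{n}$, which comes directly from the equation, the bound on $f_{t}$, and the already-established spatial regularity.

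Finally I would extract a limit. With the uniform Lipschitz-in-$x$ bound, uniform continuity in $t$, and the uniform exponential tails, the sequence $(t,x)\mapsto u_{n}(t,x+X_{n}(t))$ is precompact in $C_{loc}(\R\times\R)$ (Arzel\`a--Ascoli); passing to a subsequence one obtains an entire solution $\tilde u$ of \eqref{main-eqn} with $X_{n}(t)$ normalized to keep the interface at the origin, together with a limiting interface $X(t)$ whose increments inherit $c_{\min}^{*}\le\dot X\le c_{\max}^{*}$ and whose $C^{1}$ regularity follows from integrating the equation against a test function (or from the integral equation for the interface crossing). The limit $\tilde u$ is nonincreasing in $x$ (limit of monotone functions), satisfies the exponential bounds in (ii) (closed conditions under local uniform convergence), and hence has limits $1$ and $0$ at $x=\mp\infty$ uniformly in $t$; this gives the bounded-interface condition defining a transition front. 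Strict monotonicity of the limit is recovered from the strong maximum principle applied to $\pa_{x}\tilde u$ on $\{0<\tilde u<1\}$. \textbf{I expect the main obstacle to be the uniform-in-$n$, uniform-in-$t$ Lipschitz bound in space}: because the nonlocal diffusion provides no smoothing, this estimate must be wrung out of the structure of the nonlinearity (the sign condition \eqref{decreasing-near-1} near $u=1$, the bound on $f_{u}$) and of the kernel (the $L^{1}$ bound on $J'$ and the super-exponential decay \eqref{decay-convolution-kernel}), propagated carefully in the moving frame so that it does not deteriorate as $t_{n}\to-\infty$; controlling the behavior near $u=1$, where the reaction is not strictly negative away from $u=1$, is the part I would expect to require the most care.
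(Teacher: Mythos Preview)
Your overall architecture matches the paper's, but the paragraph where you say that sandwiching $u_n$ between shifts of the homogeneous ignition waves $\phi_{\min},\phi_{\max}$ ``immediately yields'' the two-sided speed bounds and the exponential tails hides the real work and, as stated, does not go through. The comparison $u_n(t,\cdot)\ge \phi_{\min}(\cdot-y_{s}-c_{\min}(t-s))$ and an analogous upper bound are only anchored at the initial time; as $t-s\to\infty$ the two reference interfaces drift apart at rate $c_{\max}-c_{\min}$, so they give neither a uniform bound on $\dot X_n$ nor tails measured from the \emph{actual} interface. You cannot restart the comparison at a later time $t_0$ because $u_n(t_0,\cdot)$ need not dominate $\phi_{\min}(\cdot-X_n(t_0))$ merely from agreeing at one point. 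In the paper the lower propagation bound comes from comparison with a \emph{bistable} wave (Proposition~\ref{prop-rightward-propagation}), and the upper control is obtained not from a wave $\phi_{\max}$ at all but from an auxiliary exponential interface $Y_\kappa(t;s)$ together with a Zlato\v{s}-type argument showing $\sup|Y_\kappa-X_\lambda|<\infty$ (Proposition~\ref{prop-bd-width-aux}). Only after this bounded-interface-width step does the paper build the $C^1$ interface $X(t;s)$: not as a smoothed level set (one has no derivative bounds on $X_\theta$ to smooth), but by a piecewise-linear construction with slope $c_B/2$ that is reset whenever $X_{\lambda_*}$ catches up (Theorem~\ref{thm-interface-rightward-propagation-improved}); the exponential tails (Theorem~\ref{thm-decaying-estimate}) are then derived \emph{using} the a~priori bound $\dot X\ge c_B/2$, not the other way around.

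Your Lipschitz sketch has the right ingredients (difference quotients, the linear equation for $v^\eta$, the role of $\int|J'|$ and of $\sup|f_u|$), and you correctly flag the uniform-in-$t$ issue. But ``Gronwall in the moving frame'' will blow up on an unbounded time interval unless you use more structure. The mechanism in the paper (Lemma~\ref{lem-reguarity}) is a pointwise three-region argument: to the right of the front $f_u=0$, to the left of it $f_u\le 0$ by \eqref{decreasing-near-1}, so $v^\eta$ cannot grow there; growth can occur only in the middle strip of fixed width, and because the modified interface moves with speed $\ge c_B/2$, each fixed $x_0$ spends at most a uniformly bounded time $T$ in that strip. This is what makes the estimate independent of the initial time. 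If you rewrite your Gronwall step in this pointwise form you recover the paper's argument; as written, your sketch does not yet explain why the bound does not deteriorate as $t_n\to-\infty$.
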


Clearly, due to space homogeneity, any space translation of $u(t,x)$ is also a transition front. Considering space reflection, we obtain space increasing transition fronts. Note that if $u(t,x)$ is continuously differentiable, then $\phi(t,x):=u(t,x+X(t))$ satisfies
\begin{equation*}
\begin{cases}
\phi_{t}=J\ast\phi-\phi+\dot{X}(t)\phi_{x}+f(t,\phi),\\
\phi(t,-\infty)=1,\,\,\phi(t,\infty)=0\,\,\text{uniformly in}\,\,t\in\R.
\end{cases}
\end{equation*}
In the literature, $u(t,x)=\phi(t,x-X(t))$ is also called a \textit{generalized traveling wave}, especially in the time heterogeneous media (see e.g. \cite{NaRo12,Sh99-1,Sh99-2,Sh11}).

Theorem \ref{thm-transition-front} is a nonlocal version of \cite[Theorem 1.3(1)]{ShSh14}, where the classical random dispersal case is treated. However, its proof does not follow the procedure for that of \cite[Theorem 1.3(1)]{ShSh14} due to the lack of space regularity. For clarity, let us outline the main ideas of the proof of Theorem \ref{thm-transition-front} within the following four steps.

\begin{itemize}
\item[\rm(i)] We first construct space decreasing front-like solutions $u(t,x;s)$ of \eqref{main-eqn} with initial moment at $s<0$ satisfying the normalization $u(0,0;s)=\theta$ (see \eqref{front-like-eq},  Lemmas \ref{lem-approximating-sol} and Lemma \ref{lem-approximating-sol-prop}). We are going to show that $u(t,x;s)$ converge to a transition front as $s\to-\infty$. Unlike in the classical random dispersal case, right now, we cannot conclude the convergence of $u(t,x;s)$ to some entire solution of \eqref{main-eqn} due to the lack of space regularity. In fact, from continuity and monotonicity, we only know that they are a.e. differentiable in space. Thus, in order for the convergence, certain space regularity needs to be established. Also, in order for $u(t,x;s)$ to converge to a transition front, the uniform boundedness of the interface width of $u(t,x;s)$ needs to be established.

\item[\rm(ii)] For $\la\in(0,1)$, define the interface location $X_{\la}(t;s)$ via $u(t,X_{\la}(t;s);s)=\la$. One important step in constructing transition fronts is to establish the boundedness of interface width, that is,
\begin{equation}\label{intro-interface-width}
\sup_{s<0,t\geq s}\big[X_{\la_{1}}(t;s)-X_{\la_{2}}(t;s)\big]<\infty
\end{equation}
for $0<\la_{1}<\la_{2}<1$. Combining the rightward propagation estimate of $X_{\la}(t;s)$ and an idea of Zlato\v{s} (see \cite[Lemma 2.5]{Zl13}), we are able to show \eqref{intro-interface-width} for $0<\la_{1}<\la_{2}\leq\la_{*}$, where $\la_{*}\in(\theta,1)$ is fixed
(see Theorem \ref{thm-bounded-interface-width}). Of course, this is not enough to describe the front-like shape of $u(t,x;s)$. We need \eqref{intro-interface-width} to be true for all $0<\la_{1}<\la_{2}<1$.

\item[\rm(iii)] Another important step in constructing transition fronts is to establish the steepness estimate, that is, $u_{x}(t,X_{\theta}(t;s);s)$ is uniformly negative, since then the formula
\begin{equation*}
\dot{X}_{\theta}(t;s)=-\frac{u_{t}(t,X_{\theta}(t;s);s)}{u_{x}(t,X_{\theta}(t;s);s)}
\end{equation*}
ensures the finite speed of $X_{\theta}(t;s)$. However, this does not work here for that $u_{x}(t,X_{\theta}(t;s);s)$ may not exist due to the lack of space regularity. To circumvent this, we look at the problem from a different viewpoint. Instead of trying to obtain some properties of $X_{\theta}(t;s)$, we modify it to get what we want. As a result, we obtain a new interface location $X(t;s)$, which is continuously differentiable, has bounded and uniform positive derivative and stays within a neighborhood of $X_{\theta}(t;s)$, and hence, captures the propagation nature of $u(t,x;s)$ (see Theorem \ref{thm-interface-rightward-propagation-improved}). We then use this new interface location to derive the uniform exponential decaying estimate of $u(t,x;s)$ at $\infty$ and of $1-u(t,x;s)$ at $-\infty$ (see Theorem \ref{thm-decaying-estimate}). From which, we establish \eqref{intro-interface-width} for all $0<\la_{1}<\la_{2}<1$ (see Corollary \ref{cor-bound-interface-width}).

\item[\rm(iv)] We have obtained all the expected properties of $u(t,x;s)$ except one: certain space regularity. Here, we establish the uniform Lipschitz continuity in space, that is,
\begin{equation*}
\sup_{x\in\R,\eta\neq0\atop s<0,t\geq s}\bigg|\frac{u(t,x+\eta;s)-u(t,x;s)}{\eta}\bigg|<\infty
\end{equation*}
(see Lemma \ref{lem-reguarity}).
The idea of proof is based on the decomposition of $\R$ into moving intervals according to the new interface location in $\rm(iii)$. Then, for any fixed $x$, it can stay in the interval causing the growth of $\big|\frac{u(t,x+\eta;s)-u(t,x;s)}{\eta}\big|$ only for a certain period of time. For the analysis, we need \eqref{intro-interface-width}. With this regularity and the above-mentioned properties, we finally construct a transition front, and finish the proof of Theorem \ref{thm-transition-front}.
\end{itemize}

Although the interface locations $X(t)$ in Theorem \ref{thm-transition-front} have very nice properties, in general, we have no idea about the value $u(t,X(t))$. To have a better understanding of how $u(t,x)$ propagates, let us look at the interface locations at the ignition temperature, that is, $X_{\theta}(t)$ such that $u(t,X_{\theta}(t))=\theta$ for all $t\in\R$. Due to the space monotonicity of $u(t,x)$, $X_{\theta}(t)$ is well-defined and continuous. We prove

\begin{thm}\label{thm-bounded-oscillation}
Suppose $\rm(H1)$ and $\rm(H2)$. Then, for any $\de>0$, there holds
\begin{equation*}
\sup_{|t-s|\leq\de}|X_{\theta}(t)-X_{\theta}(s)|<\infty.
\end{equation*}
In particular, $\limsup_{t\to\pm\infty}\big|\frac{X_{\theta}(t)}{t}\big|<\infty$.
\end{thm}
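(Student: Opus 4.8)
The plan is to deduce the statement from the quantitative properties of the transition front $u$ recorded in Theorem~\ref{thm-transition-front}. Since $u(t,\cdot)$ is continuous, strictly decreasing, and satisfies $u(t,-\infty)=1$, $u(t,\infty)=0$ for every $t$, the level set $\{x:u(t,x)=\theta\}$ is the single point $X_{\theta}(t)$, which is moreover continuous in $t$. The key reduction is to show that $X_{\theta}(t)$ remains within a uniformly bounded distance of the regular interface location $X(t)$ from Theorem~\ref{thm-transition-front}; once this is done, the control $c_{\min}^{*}\le\dot X(t)\le c_{\max}^{*}$ transfers to $X_{\theta}$.

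For the comparison I would use the two exponential decaying estimates in Theorem~\ref{thm-transition-front}(ii). Set $x_{+}:=\frac{1}{c_{+}}\ln\frac{1}{\theta}\ge0$ and $x_{-}:=\frac{1}{c_{-}}\ln(1-\theta)\le0$. Evaluating the first estimate at $x=x_{+}$ gives $u(t,x_{+}+X(t)+h_{+})\le e^{-c_{+}x_{+}}=\theta$, so strict spatial monotonicity of $u(t,\cdot)$ forces $X_{\theta}(t)\le X(t)+h_{+}+x_{+}$. Evaluating the second estimate at $x=x_{-}$ gives $u(t,x_{-}+X(t)-h_{-})\ge 1-e^{c_{-}x_{-}}=\theta$, so likewise $X_{\theta}(t)\ge X(t)-h_{-}+x_{-}$. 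Hence $|X_{\theta}(t)-X(t)|\le C_{0}$ for all $t\in\R$, with $C_{0}:=\max\{h_{+}+x_{+},\ h_{-}+|x_{-}|\}$.

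To finish, fix $\de>0$. For $|t-s|\le\de$, writing $X(t)-X(s)=\int_{s}^{t}\dot X(r)\,dr$ and using $0<\dot X\le c_{\max}^{*}$ together with the bound just obtained,
\[
|X_{\theta}(t)-X_{\theta}(s)|\le |X_{\theta}(t)-X(t)|+\Big|\int_{s}^{t}\dot X(r)\,dr\Big|+|X(s)-X_{\theta}(s)|\le 2C_{0}+c_{\max}^{*}\de,
\]
which yields $\sup_{|t-s|\le\de}|X_{\theta}(t)-X_{\theta}(s)|<\infty$. The same comparison gives $|X_{\theta}(t)|\le C_{0}+|X(0)|+c_{\max}^{*}|t|$ for all $t$, whence $\limsup_{t\to\pm\infty}\big|\frac{X_{\theta}(t)}{t}\big|\le c_{\max}^{*}<\infty$.

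I do not anticipate a genuine obstacle here: the theorem is essentially a corollary of Theorem~\ref{thm-transition-front}, and the only point requiring care is the passage from the exponential tail bounds to the two-sided inclusion $X(t)-h_{-}-|x_{-}|\le X_{\theta}(t)\le X(t)+h_{+}+x_{+}$, which rests solely on the strict monotonicity of $u(t,\cdot)$ in space.
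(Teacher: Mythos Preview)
Your proof is correct. The key step---bounding $|X_{\theta}(t)-X(t)|$ uniformly by reading off the two exponential tail estimates of Theorem~\ref{thm-transition-front}(ii) at suitable points---is exactly what the paper does to obtain its inequality \eqref{bounded-interface-width-transition-front}. From there, however, the two arguments diverge.

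You finish by the triangle inequality: $|X_{\theta}(t)-X_{\theta}(s)|\le 2C_{0}+|X(t)-X(s)|\le 2C_{0}+c_{\max}^{*}\de$, using only $\dot X\le c_{\max}^{*}$. The paper instead argues through time regularity of $u$: it uses the bound $\bar C:=\sup|u_{t}|<\infty$ to show $u(t+\eta,X_{\theta}(t))\in[\theta-\bar C\eta,\theta+\bar C\eta]$, which for small $\eta$ traps $X_{\theta}(t)$ between $X_{(1+\theta)/2}(t+\eta)$ and $X_{\theta/2}(t+\eta)$; the uniform closeness of these level sets to $X_{\theta}(t+\eta)$ (again via \eqref{bounded-interface-width-transition-front}) then gives a bound on $|X_{\theta}(t+\eta)-X_{\theta}(t)|$ for $\eta\in[0,\eta_{0}]$, which is iterated to reach arbitrary $\de$. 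The paper itself remarks at the very end that the ``in particular'' clause follows immediately from $\sup_{t}|X_{\theta}(t)-X(t)|<\infty$ together with $\dot X\in[\tfrac{c_{B}}{2},c_{\max}]$; your argument is precisely this observation, applied not just to the asymptotic statement but to the main one as well. It is shorter and avoids the detour through $u_{t}$, at the cost of a slightly less explicit dependence of the bound on $\de$ (linear in $\de$ plus an additive constant, versus the paper's step-count bound $(k+1)C_{1}$).
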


It is known that $X_{\theta}(t)$ oscillates due to the time dependence of $f$. Theorem \ref{thm-bounded-oscillation} then says that such oscillation is locally uniformly bounded.

The paper is organized as follows. In Section \ref{sec-approximation-sol}, we construct approximating solutions and study some fundamental properties. In Section \ref{sec-bound-interface-width}, we show that interface width of approximating solutions remains uniformly bounded as time elapses. In Section \ref{sec-modified-decaying}, we construct modified interface location and use it to derive uniform exponential decaying estimates of approximating solutions. In the final section, Section \ref{sec-transition-front}, we study the uniform Lipschitz continuity of approximating solutions and finish the proof of Theorem \ref{thm-transition-front} and Theorem \ref{thm-bounded-oscillation}. The paper is ended up with Appendix \ref{sec-app-CP} on comparison principles.


\section{Approximating front-like  solutions}\label{sec-approximation-sol}

In this section, we construct approximating front-like solutions of \eqref{main-eqn}, which will be shown to converge to a
transition solution of \eqref{main-eqn}. Throughout this section, we assume $\rm(H1)$ and $\rm(H2)$. For the comparison principle, it is referred to Proposition \ref{lem-comparison} in the appendix.

First we note that, by general semigroup theory (see e.g. \cite{Paz}),  for any $u_0\in C_{\rm unif}^b(\R,\R)$ and $s\in\R$,  \eqref{main-eqn}
has a unique (local) solution $u(t,\cdot;s,u_0)\in C_{\rm unif}^b(\R,\R)$ with $u(s,x;s,u_0)=u_0(x)$, where
$$
C_{\rm unif}^b(\R,\R)=\{u\in C(\R,\R)\,|\, u\,\, \text{is uniformly continuous on}\,\,\R\,\,\text{and}\,\, \sup_{x\in\R}|u(x)|<\infty\}
$$
equipped with the norm $\|u\|=\sup_{x\in\R}|u(x)|$. Moreover, $u(t,\cdot;s,u_0)$ is continuous in $s\in\R$ and $u_0\in C_{\rm unif}^b(\R,\R)$.
By the comparison principle, if $u_0(x)\ge 0$ for $x\in\R$, then
$u(t,\cdot;s,u_0)$ exists for all $t\ge s$ and $u(t,x;s,u_0)\ge 0$ for $t\ge s$ and $x\in\R$.

Consider the following homogeneous equation
\begin{equation}\label{main-eqn-perturb-homo}
u_{t}=J\ast u-u+f_{\min}(u),\quad (t,x)\in\R\times\R,
\end{equation}
where $f_{\min}$, given in $\rm(H2)$, is of ignition type. Let us first summarize some results obtained in \cite{Cov-thesis}. There are a unique $c_{\min}>0$ and a unique continuously differentiable function $\phi=\phi_{\min}:\R\to(0,1)$ satisfying
\begin{equation}\label{tw-homo}
\begin{cases}
J\ast\phi-\phi+c_{\min}\phi_{x}+f_{\min}(\phi)=0,\\
\phi_{x}<0,\,\,\phi(0)=\theta,\,\,\phi(-\infty)=1\,\,\text{and}\,\,\phi(\infty)=0.
\end{cases}
\end{equation}
That is, $\phi_{\min}$ is the normalized wave profile and $\phi_{\min}(x-c_{\min}t)$ is the traveling wave of \eqref{main-eqn-perturb-homo}. Moreover, $\phi_{\min}\in C^{1}(\R)$ and it enjoys exponential decaying estimates stated in the following lemma.

\begin{lem}\label{lem-app-exponential-decay}
There exist $c_{\min}^{\pm}>0$ and $x_{\min}<0$ such that
\begin{equation*}
\begin{split}
\phi_{\min}(x)&\leq e^{-c_{\min}^{+}x},\quad x\geq0,\\
1-\phi_{\min}(x)&\leq e^{c_{\min}^{-}(x-x_{\min})},\quad x\leq x_{\min}.
\end{split}
\end{equation*}
\end{lem}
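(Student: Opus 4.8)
The plan is to derive the exponential decay of $\phi_{\min}$ at $+\infty$ and of $1-\phi_{\min}$ at $-\infty$ directly from the traveling wave equation \eqref{tw-homo}, using linearization near the two end states together with a comparison argument. First I would look at the behavior near $x=+\infty$, where $\phi_{\min}(x)\to0$ and $\phi_{\min}(x)\in[0,\theta]$ for $x$ large, so that $f_{\min}(\phi_{\min}(x))=0$ on that range; there \eqref{tw-homo} reduces to the linear nonlocal equation $J\ast\phi-\phi+c_{\min}\phi_{x}=0$. The natural move is to seek exponential supersolutions of the form $x\mapsto e^{-\mu x}$ and to compute the characteristic function $\Lambda(\mu):=\int_{\R}J(y)e^{\mu y}\,dy-1-c_{\min}\mu$, which is finite for all $\mu>0$ by hypothesis \eqref{decay-convolution-kernel}. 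Since $\Lambda(0)=0$, $\Lambda'(0)=\int_{\R}yJ(y)\,dy-c_{\min}=-c_{\min}<0$ (using $J$ even), and $\Lambda(\mu)\to+\infty$ as $\mu\to\infty$ again by \eqref{decay-convolution-kernel}, there exists a positive root $\mu_{+}$ with $\Lambda(\mu_{+})=0$; picking $c_{\min}^{+}\in(0,\mu_{+})$ makes $e^{-c_{\min}^{+}x}$ a strict supersolution of the linear operator on the region where $\phi_{\min}\le\theta$. A sliding/comparison argument on the half-line $\{x\ge x_{0}\}$, where $x_{0}$ is chosen so that $\phi_{\min}(x_{0})\le\theta$, then yields $\phi_{\min}(x)\le e^{-c_{\min}^{+}x}$ for $x\ge0$ after possibly adjusting the constant (or shifting), which is exactly the first estimate.

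For the estimate near $-\infty$, I would set $v:=1-\phi_{\min}$, so $v\to0$ as $x\to-\infty$ and, since $\phi_{\min}$ is near $1$ there, one can use $f_{\min}\in C^{1,\al}([0,1])$ with $f_{\min}(1)=0$ and $f_{\min}'(1)<0$ to write $f_{\min}(\phi_{\min})=f_{\min}(1-v)=f_{\min}'(1)(-v)+o(v)=|f_{\min}'(1)|\,v+o(v)$. Substituting into \eqref{tw-homo} gives $J\ast v-v+c_{\min}v_{x}+|f_{\min}'(1)|v+(\text{higher order})=0$ near $x=-\infty$. The relevant characteristic function is now $\Lambda_{-}(\mu):=\int_{\R}J(y)e^{-\mu y}\,dy-1+c_{\min}\mu+|f_{\min}'(1)|$ for $\mu>0$ (corresponding to the ansatz $v\sim e^{\mu x}$ as $x\to-\infty$); one checks $\Lambda_{-}(0)=|f_{\min}'(1)|>0$ and, because $\Lambda_{-}$ is convex with $\Lambda_{-}'(0)=c_{\min}-\int yJ\,dy=c_{\min}>0$... so in fact I should be careful: I want a root, so I would instead examine the sign more carefully and use that $v$ solves a linear inequality $J\ast v-v+c_{\min}v_x+(|f_{\min}'(1)|+\ep)v\ge 0$ for $x$ very negative, for which $e^{c_{\min}^{-}(x-x_{\min})}$ with a suitably small $c_{\min}^{-}>0$ and suitably negative $x_{\min}$ is a supersolution, again by a characteristic-root computation exploiting \eqref{decay-convolution-kernel} for the $\int J(y)e^{-\mu y}dy$ term. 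A comparison principle on $\{x\le x_{\min}\}$ then gives $1-\phi_{\min}(x)\le e^{c_{\min}^{-}(x-x_{\min})}$ for $x\le x_{\min}$.

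The main obstacle I anticipate is twofold. First, the comparison arguments are on half-lines rather than on all of $\R$, so one must handle the boundary terms at $x=x_0$ (resp. $x=x_{\min}$): here the key point is that $\phi_{\min}$ is bounded above by $\theta$ (resp. $1-\phi_{\min}$ bounded by a small constant) at the chosen cutoff, so after scaling the exponential supersolution by a fixed constant $\ge1$ one can dominate $\phi_{\min}$ (resp. $1-\phi_{\min}$) at the boundary and invoke the nonlocal comparison principle on the half-line — this works precisely because the equation is linear there and the nonlocal operator $J\ast(\cdot)$ is order-preserving. Second, and more delicate, is that the nonlocal term $J\ast$ reaches across the cutoff point, so the linearized equation is not genuinely supported on the half-line; to deal with this I would either extend the exponential barrier smoothly and monotonically to all of $\R$ (using that $e^{-c_{\min}^{+}x}$ with $c_{\min}^{+}<\mu_{+}$ remains a supersolution wherever $\phi_{\min}\le\theta$, and $\phi_{\min}$ is already $\le1\le$ a constant times the barrier on the bounded complementary region) or appeal to the monotonicity $\phi_{x}<0$ to control the crossing. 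Once the barrier is globally defined and dominates $\phi_{\min}$ (resp. $1-\phi_{\min}$) outside the half-line by construction, the comparison principle applies cleanly. The characteristic-root computations themselves are routine given \eqref{decay-convolution-kernel} and $f_{\min}'(1)<0$.
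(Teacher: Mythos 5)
The paper itself gives no proof of this lemma: it is quoted directly from Coville's thesis (\cite{Cov-thesis}), so your characteristic-function-plus-comparison argument is a genuine alternative derivation rather than a reconstruction of the paper's argument. Your treatment of the decay at $+\infty$ is sound: with $\Lambda(\mu)=\int_{\R}J(y)e^{\mu y}\,dy-1-c_{\min}\mu$ one indeed has $\Lambda(0)=0$, $\Lambda'(0)=-c_{\min}<0$, convexity, and $\Lambda\to+\infty$, giving a positive root $\mu_{+}$ and a strict supersolution $e^{-c_{\min}^{+}x}$ for $c_{\min}^{+}\in(0,\mu_{+})$; and your observation about the nonlocal term reaching across the cutoff---extend the exponential barrier to all of $\R$ and check directly that it dominates $\phi_{\min}$ on $\{x<0\}$ because there $\phi_{\min}<1<e^{-c_{\min}^{+}x}$, so that at a putative negative interior minimum the inequality $J\ast w(x^{*})\ge w(x^{*})$ still holds---is exactly the right resolution.

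However, the second half contains a genuine sign error that you noticed (``I should be careful: I want a root'') but did not actually locate. Writing $v=1-\phi_{\min}$, equation \eqref{tw-homo} transforms into $J\ast v-v+c_{\min}v_{x}=f_{\min}(1-v)$, and since $f_{\min}(1)=0$, $f_{\min}'(1)<0$, one has $f_{\min}(1-v)=|f_{\min}'(1)|v+o(v)\ge\beta v$ on $\{x\le x_{\min}\}$ for some $\beta>0$, where $x_{\min}$ is chosen so that $\phi_{\min}\ge\tilde\theta$ there. Thus $J\ast v-v+c_{\min}v_{x}-\beta v\ge0$: the zero-order coefficient is $-\beta<0$, not $+(|f_{\min}'(1)|+\ep)$ as you wrote. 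Correspondingly the characteristic function should be $\Psi(\mu):=\int_{\R}J(y)e^{-\mu y}\,dy-1+c_{\min}\mu-\beta$, which satisfies $\Psi(0)=-\beta<0$ (not $>0$ as your $\Lambda_{-}(0)=|f_{\min}'(1)|$ suggested), $\Psi'(0)=c_{\min}>0$, convexity, and $\Psi\to+\infty$, hence has a unique positive root $\mu_{-}$; any $c_{\min}^{-}\in(0,\mu_{-})$ makes $e^{c_{\min}^{-}(x-x_{\min})}$ a strict supersolution. The sign matters: at a negative interior minimum $x^{*}$ of (barrier minus $v$), the term $-\beta(w-v)(x^{*})$ must be strictly positive to close the maximum-principle contradiction, which requires the zero-order coefficient to be negative; with your $+$ sign that step fails. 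Once the sign is corrected, your half-line comparison with $w-v\to0$ as $x\to-\infty$, $w-v\ge0$ at $x=x_{\min}$ and on $\{x>x_{\min}\}$ by the choice $w(x_{\min})=1>1-\tilde\theta\ge v(x_{\min})$, closes the argument exactly as in the first half.
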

We remark that it is shown in \cite{Cov-thesis} that $\phi_{\min}$ does not decay faster than exponential function at $\infty$ and $1-\phi_{\min}$ does not decay faster than exponential function at $-\infty$.

For $s<0$ and $y\in\R$, denote by $u(t,x;s,\phi_{\min}(\cdot-y))$ the classical solution of \eqref{main-eqn} with initial data $u(s,x;s,\phi_{\min}(\cdot-y))=\phi_{\min}(x-y)$. We remark that there is no guarantee of space regularity of $u(t,x;s,\phi_{\min}(\cdot-y))$. The next lemma gives the approximating solutions.

\begin{lem}\label{lem-approximating-sol}
For any $s<0$, there exists a unique $y_{s}\in\R$ such that $u(0,0;s,\phi_{\min}(\cdot-y_{s}))=\theta$. Moreover, $y_{s}\to-\infty$ as $s\to-\infty$.
\end{lem}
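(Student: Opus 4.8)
The plan is to define, for each fixed $s<0$, the function $g_{s}(y):=u(0,0;s,\phi_{\min}(\cdot-y))$ and to show that $g_{s}:\R\to(0,1)$ is continuous, strictly decreasing, and satisfies $g_{s}(y)\to 1$ as $y\to+\infty$ and $g_{s}(y)\to 0$ as $y\to-\infty$; the existence and uniqueness of $y_{s}$ with $g_{s}(y_{s})=\theta$ then follows from the intermediate value theorem together with strict monotonicity. Continuity of $g_{s}$ in $y$ is immediate from the continuous dependence of solutions on initial data in $C_{\rm unif}^{b}(\R,\R)$ (quoted at the start of the section) and the continuity of $y\mapsto\phi_{\min}(\cdot-y)$ in that space, which holds because $\phi_{\min}$ is uniformly continuous. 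Strict monotonicity in $y$: since $\phi_{\min}$ is strictly decreasing, $y_{1}<y_{2}$ gives $\phi_{\min}(\cdot-y_{1})\le\phi_{\min}(\cdot-y_{2})$ with strict inequality somewhere, so by the comparison principle (Proposition \ref{lem-comparison}) and its strong form the corresponding solutions are strictly ordered at $(0,0)$, whence $g_{s}(y_{1})<g_{s}(y_{2})$.

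For the limits I would use comparison with spatially constant sub/supersolutions. As $y\to+\infty$, the data $\phi_{\min}(\cdot-y)\to 1$ locally uniformly; comparing from below with the solution of the ODE $\dot v=f_{\min}(v)$ (equivalently $f$, using $f\ge f_{\min}$) started just below $1$ and using that $1$ is a stable rest state for $f_{\min}$ near $1$ (recall $f_{\min}'(1)<0$), one gets $g_{s}(y)\to 1$. As $y\to-\infty$, the data tend to $0$ locally uniformly near $x=0$; more precisely, for any small $\ep>0$ one can bound $\phi_{\min}(\cdot-y)$ above on a large interval around $0$ by a supersolution that stays below $\theta$ on $[s,0]$ (using the exponential decay of $\phi_{\min}$ from Lemma \ref{lem-app-exponential-decay} to control the nonlocal term $J\ast u$, together with $f(t,u)=0$ for $u\le\theta$), so that $u(\cdot,\cdot;s,\phi_{\min}(\cdot-y))$ solves the \emph{linear} equation $u_{t}=J\ast u-u$ while it remains below $\theta$, for which decay estimates give $g_{s}(y)\to 0$.

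Finally, for the claim $y_{s}\to-\infty$ as $s\to-\infty$: here the idea is that over a longer time interval $[s,0]$ the front-like datum propagates rightward a definite amount, so to keep the value at the fixed point $(0,0)$ pinned at $\theta$ the initial shift must be pushed further left. Concretely, I would first establish a uniform-in-$s$ lower bound on the rightward progress — e.g. that $u(0,x;s,\phi_{\min}(\cdot-y))\ge\theta$ for all $x\le y+c_{0}|s|$ for some $c_{0}>0$, obtained by comparison with an appropriately shifted subsolution built from the homogeneous traveling wave $\phi_{\min}(x-c_{\min}t)$ (valid since $f\ge f_{\min}$) — and then argue that if $y_{s}$ stayed bounded along some sequence $s_{n}\to-\infty$, the value $u(0,0;s_{n},\phi_{\min}(\cdot-y_{s_{n}}))$ would be forced strictly above $\theta$, a contradiction. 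I expect this last step — extracting a \emph{uniform} rightward-propagation estimate before any of the machinery of later sections is available — to be the main obstacle; it may instead be cleanest to note that $y_{s}\le y_{0}$ is monotone or at least bounded above by a crude comparison, and then run the contradiction argument using only the one-sided subsolution bound above.
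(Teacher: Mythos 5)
Your skeleton (continuity and strict monotonicity of $g_{s}(y):=u(0,0;s,\phi_{\min}(\cdot-y))$, then locate a crossing of $\theta$) matches the paper, and the continuity and uniqueness parts are fine. But your argument that $g_{s}(y)\to 1$ as $y\to+\infty$ has a gap: a spatially constant subsolution $v(t)$ solving $\dot v=f_{\min}(v)$ started near $1$ must satisfy $v(s)\le\phi_{\min}(x-y)$ for \emph{every} $x$, which fails because $\phi_{\min}(x-y)\to 0$ as $x\to+\infty$ for each fixed $y$. The correct tool is the one you already invoke in your last paragraph: since $f\ge f_{\min}$, the translating profile $\phi_{\min}(x-y-c_{\min}(t-s))$ is a subsolution, so $g_{s}(y)\ge\phi_{\min}(c_{\min}s-y)$. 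This gives $g_{s}(y)\ge\theta$ whenever $y\ge c_{\min}s$ and, by strict monotonicity, $y_{s}\le c_{\min}s\to-\infty$. Hence the ``main obstacle'' you worry about in the final paragraph does not exist; there is no need for any uniform-in-$s$ spreading estimate or contradiction argument.

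On the other side, your plan (keep the solution below $\theta$ so $f$ vanishes, then decay for the linear equation, controlling $J\ast u$ via exponential decay) is more delicate than necessary and not clearly closeable as stated, since $J\ast u$ sees the whole line. The paper instead uses the global supersolution $v(t,x)=e^{-c_{\min}^{+}(x-y-c(t-s))}$ with $c$ large enough that $c_{\min}^{+}c-\int_{\R}J(z)e^{c_{\min}^{+}z}dz+1\ge\sup_{u\in(0,1)}f_{\max}(u)/u$, which dominates the initial datum by Lemma \ref{lem-app-exponential-decay} and yields $g_{s}(y)\le e^{c_{\min}^{+}(y+cs)}\to0$. Together these two explicit comparisons give the two-sided pinch $\frac{\ln\theta+c_{\min}^{+}cs}{c_{\min}^{+}}\le y_{s}\le c_{\min}s$, i.e.\ \eqref{initial-position}, which is slightly stronger than what your IVT route produces and is actually reused in the proof of Lemma \ref{lem-approximating-sol-prop}; so I would replace both of your limit arguments by these explicit bounds and keep the rest of your proof as is.
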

\begin{proof}
Let $s<0$. We first see that the comparison principle gives
\begin{equation*}
u(t,x;s,\phi_{\min}(\cdot-y))\geq\phi_{\min}(x-y-c_{\min}(t-s)),\quad x\in\R,\,\,t\geq s.
\end{equation*}
In particular, $u(0,0;s,\phi_{\min}(\cdot-y))\geq\phi_{\min}(c_{\min}s-y)$. Thus, the monotonicity and the normalization of $\phi_{\min}$ ensure that $u(0,0;s,\phi_{\min}(\cdot-y))\geq\theta$ if $y\geq c_{\min}s$.

We now construct a suitable supersolution to bound $u(t,x;s,\phi_{\min}(\cdot-y))$ from above. By Lemma \ref{lem-app-exponential-decay}, we have $\phi_{\min}(\cdot-y)\leq e^{-c_{\min}^{+}(\cdot-y)}$ for all $y\in\R$. Set $v^{y}(t,\cdot;s)=e^{-c_{\min}^{+}(\cdot-y-c(t-s))}$, where $c>0$ is to be chosen. We compute
\begin{equation*}
v^{y}_{t}-[J\ast v^{y}-v^{y}]=\bigg[c_{\min}^{+}c-\int_{\R}J(z)e^{c_{\min}^{+}z}dz+1\bigg]v^{y}\geq f(t,v^{y})
\end{equation*}
provided $c>0$ is so large that $c_{\min}^{+}c-\int_{\R}J(z)e^{c_{\min}^{+}z}dz+1\geq\sup_{u\in(0,1)}\frac{f_{\max}(u)}{u}$. It then follows from the comparison principle that
\begin{equation*}
u(t,x;s,\phi_{\min}(\cdot-y))\leq e^{-c_{\min}^{+}(x-y-c(t-s))},\quad x\in\R,\,\,t\geq s.
\end{equation*}
In particular, $u(0,0;s,\phi_{\min}(\cdot-y))\leq\theta$ if $y\leq\frac{\ln\theta+c_{\min}^{+}cs}{c_{\min}^{+}}$.

Continuity of $u(0,0;s,\phi_{\min}(\cdot-y))$ in $y$ then yields the existence of some $y_{s}\in\R$ such that $u(0,0;s,\phi_{\min}(\cdot-y_{s}))=\theta$. The uniqueness of such an $y_{s}$ is a simple consequence of the comparison principle. In fact, if $y_{1}<y_{2}$, then $\phi_{\min}(\cdot-y_{1})<\phi_{\min}(\cdot-y_{2})$ by monotonicity, which implies that $u(0,0;s,\phi_{\min}(\cdot-y_{1}))<u(0,0;s,\phi_{\min}(\cdot-y_{2}))$.

Moreover, the above analysis implies that
\begin{equation}\label{initial-position}
\frac{\ln\theta+c_{\min}^{+}cs}{c_{\min}^{+}}\leq y_{s}\leq c_{\min}s,
\end{equation}
and hence, $y_{s}\to-\infty$ as $s\to-\infty$.
\end{proof}

For notational simplicity, in what follows, we put
\begin{equation}\label{front-like-eq} 
u(t,x;s)=u(t,x;s,\phi_{\min}(\cdot-y_{s})).
\end{equation}
Thus, $u(s,\cdot;s)=\phi_{\min}(\cdot-y_{s})$. The next lemma provides some fundamental results of $u(t,x;s)$.

\begin{lem}\label{lem-approximating-sol-prop}
For any $s<0$ and $t\geq s$ there hold
\begin{itemize}
\item[\rm(i)] the limits $u(t,-\infty;s)=1$ and $u(t,\infty;s)=0$ are locally uniformly in $s$ and $t$;
\item[\rm(ii)] $u(t,x;s)$ is strictly decreasing in $x$. In particular, $u(t,x;s)$ is almost everywhere differentiable in $x$.
\end{itemize}
\end{lem}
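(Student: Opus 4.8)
The plan is to establish (ii) first, since the almost-everywhere differentiability is then immediate from Lebesgue's theorem on monotone functions, and then to read off the limits in (i) from the sub/supersolution bounds already obtained in the proof of Lemma~\ref{lem-approximating-sol}.

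For (ii) I would propagate monotonicity from the initial datum. Fix $s<0$ and $\eta>0$. Since \eqref{main-eqn} is invariant under spatial translation (the kernel convolution commutes with translation and $f=f(t,u)$ is $x$-independent), the function $v(t,x):=u(t,x-\eta;s)$ is again a solution of \eqref{main-eqn}, and at the initial moment $v(s,x)=\phi_{\min}(x-\eta-y_{s})>\phi_{\min}(x-y_{s})=u(s,x;s)$ for every $x\in\R$, because $\phi_{\min}$ is strictly decreasing. The comparison principle (Proposition~\ref{lem-comparison}) then gives $v(t,x)\ge u(t,x;s)$ for all $t\ge s$, i.e. $u(\cdot,\cdot;s)$ is nonincreasing in $x$. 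To upgrade this to strict monotonicity, set $z:=v-u(\cdot,\cdot;s)\ge0$; by the mean value theorem and the uniform bound $M:=\sup_{\R\times[0,\infty)}|f_{u}|<\infty$ from $\rm(H2)$ one has $z_{t}=J\ast z-z+c(t,x)z$ with $|c|\le M$, hence $\partial_{t}\big(e^{(1+M)(t-s)}z\big)=e^{(1+M)(t-s)}\big(J\ast z+(M+c)z\big)\ge0$ since $z\ge0$ and $J\ge0$. Integrating in $t$ yields $z(t,x)\ge e^{-(1+M)(t-s)}z(s,x)>0$ for all $t\ge s$ and $x\in\R$, that is $u(t,x-\eta;s)>u(t,x;s)$ for every $\eta>0$, which is the asserted strict spatial monotonicity.

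For (i), first note that the constant function $1$ solves \eqref{main-eqn} since $f(t,1)=0$ by $\rm(H2)$, and $\phi_{\min}(\cdot-y_{s})<1$, so the comparison principle gives $u(t,x;s)\le1$ for all $t\ge s$. On the other hand, the bounds derived inside the proof of Lemma~\ref{lem-approximating-sol},
\[
\phi_{\min}\big(x-y_{s}-c_{\min}(t-s)\big)\le u(t,x;s)\le e^{-c_{\min}^{+}\left(x-y_{s}-c(t-s)\right)},\qquad x\in\R,\ t\ge s,
\]
give $u(t,\infty;s)=0$ from the right-hand inequality and, together with $u\le1$ and $\phi_{\min}(-\infty)=1$, also $u(t,-\infty;s)=1$. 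For the local uniformity, fix a compact $K\subset\{(s,t):s<0,\ t\ge s\}$; using \eqref{initial-position} and $c_{\min}(t-s),\,c(t-s)\ge0$, both $y_{s}+c_{\min}(t-s)$ and $y_{s}+c(t-s)$ remain in a bounded interval $[m_{K},M_{K}]$ as $(s,t)$ ranges over $K$. Inserting this into the two bounds and using that $\phi_{\min}$ is decreasing with $\phi_{\min}(-\infty)=1$ shows that, given $\ep\in(0,1)$, there is $R=R(\ep,K)>0$ with $u(t,x;s)\le\ep$ for $x\ge R$ and $u(t,x;s)\ge1-\ep$ for $x\le-R$, uniformly for $(s,t)\in K$, which is (i).

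The only genuinely delicate step is the passage from nonincreasing to strictly decreasing in (ii): because the nonlocal equation has no regularizing effect there is no parabolic strong maximum principle to invoke, so one must instead exploit the order-preserving structure of the $J\ast(\cdot)$ term through the exponential-multiplier argument above. Everything else is routine bookkeeping with the explicit comparison functions already constructed.
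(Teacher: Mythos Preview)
Your proof is correct and follows essentially the same approach as the paper: part~(ii) via spatial translation invariance plus the comparison principle, and part~(i) via the explicit sub/supersolution bounds $\phi_{\min}(x-y_{s}-c_{\min}(t-s))\le u\le e^{-c_{\min}^{+}(x-y_{s}-c(t-s))}$ from Lemma~\ref{lem-approximating-sol} together with \eqref{initial-position}. The one place you diverge slightly is in upgrading nonincreasing to strictly decreasing: the paper simply invokes the strict part of the comparison principle (Proposition~\ref{lem-comparison}(iii), whose proof is deferred to \cite{ShZh10}), whereas your exponential-multiplier argument $\partial_{t}(e^{(1+M)(t-s)}z)\ge0$ is a self-contained alternative that exploits the fact that $z(s,\cdot)>0$ pointwise---a clean way to avoid any appeal to a strong maximum principle in the nonlocal setting.
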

\begin{proof}
$\rm(i)$ It follows from the fact $u(t,x;s)\in(0,1)$ by the comparison principle, the estimate \eqref{initial-position} and the following estimate
\begin{equation}\label{aprior-estimate-two-side1}
\phi(x-y_{s}-c_{\min}(t-s))\leq u(t,x;s)\leq e^{-c_{\min}^+(x-y_{s}-c(t-s))}
\end{equation}
for some sufficiently large $c>0$, which is derived in Lemma \ref{lem-approximating-sol}.

$\rm(ii)$ For the monotonicity, we first see that $u(s,x;s)=\phi_{\min}(x-y_{s})$ is strictly decreasing in $x$. For any $y>0$, we apply comparison principle to $u(t,x-y;s)-u(t,x;s)$ to conclude that that $u(t,x-y;s)>u(t,x;s)$ for $t>s$. The result then follows.
\end{proof}

We remark that the family $\{u(t,x;s)\}_{s<0}$ is the approximating front-like solutions, which will be shown to converge to a transition front of \eqref{main-eqn} as $s\to -\infty$. However, due to the lack of space regularity as mentioned before, it is not clear that this family will converge to some solution of \eqref{main-eqn}.  Also even if $u(t,x;s)$ converges to some solution of \eqref{main-eqn}, it is difficult to see that the limiting solution is a transition front. We will then first establish in Sections \ref{sec-bound-interface-width} and \ref{sec-modified-decaying} the uniform boundedness of the interface width of $u(t,x;s)$ and uniform decaying estimates of $u(t,x;s)$, respectively, which assure the limiting solution of $u(t,x;s)$ as $s\to -\infty$ (if exists) is a transition front. Later, in Lemma \ref{lem-reguarity}, we will show the uniform Lipschitz continuity in space of the approximating solutions, which of course implies the convergence of the approximating solutions thanks to Arzel\`{a}-Ascoli theorem, but its proof, using the uniform boundedness of interface width (see Corollary \ref{cor-bound-interface-width}), is not straightforward.


\section{Bounded interface width}\label{sec-bound-interface-width}

For $s<0$, $t\geq s$ and $\la\in(0,1)$, let $X_{\la}(t;s)$ be such that $u(t,X_{\la}(t;s);s)=\la$. By Lemma \ref{lem-approximating-sol-prop}, it is well-defined and continuous in $t$. Moreover, $X_{\la_{1}}(t;s)>X_{\la_{2}}(t;s)$ if $\la_{1}<\la_{2}$. We point out that $X_{\la}(t;s)$ is not monotonic in $t$ due to the time dependence of $f$. As usual, we refer to $X_{\la}(t;s)$ as the interface location, and the corresponding point on the solution curve as the interface. Assume $\rm(H1)$ and $\rm(H2)$ in this section. The main result in this section is stated in the following theorem.

\begin{thm}\label{thm-bounded-interface-width}
There exists $\la_{*}\in(\theta,1)$ such that for any $0<\la_{1}<\la_{2}\leq\la_{*}$, there holds
\begin{equation*}
\sup_{s<0,t\geq s}\big[X_{\la_{1}}(t;s)-X_{\la_{2}}(t;s)\big]<\infty.
\end{equation*}
\end{thm}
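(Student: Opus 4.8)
The plan is to derive, uniformly in $s<0$ and $t\ge s$, two complementary one-sided estimates: a \emph{rightward propagation} lower bound on the speed of $X_{\la}(t;s)$ valued at the ignition threshold (or slightly above), and a \emph{comparison-from-below} bound that prevents the level sets for small $\la_1$ from lagging too far behind. Because we have no space regularity, I cannot differentiate $X_{\la}$; instead I will work entirely with the sub/super-solution comparison principle (Proposition \ref{lem-comparison}) applied to shifts of the homogeneous wave $\phi_{\min}$ and to exponential barriers of the type already used in Lemma \ref{lem-approximating-sol}. First I would fix a value $\la_*\in(\theta,1)$ close to $1$, to be constrained later (its role is to let us invoke \eqref{decreasing-near-1}: on $[\tilde\theta,1]$ the nonlinearity is nonincreasing, so the dynamics there is essentially a decaying heat-type flow and level sets can be squeezed together). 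The key structural fact is that for $u$-values in $[\theta,\la_*]$ the reaction is bounded below by $f_{\min}$, while for $u$-values above $\tilde\theta$ one can run a linear comparison with a contracting profile.

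The main steps, in order. (1) \textbf{Rightward propagation of the bulk.} Using the comparison principle and the lower bound $u(t,x;s)\ge \phi_{\min}(x-y_s-c_{\min}(t-s))$ from Lemma \ref{lem-approximating-sol}, together with the fact that $u\ge\theta$ on a half-line whose right endpoint moves at least at speed $c_{\min}>0$ up to a bounded error, I get $X_{\theta}(t;s)\ge X_{\theta}(\tau;s)+c_{\min}(t-\tau)-C$ for all $s\le\tau\le t$, with $C$ independent of $s,\tau,t$; this is essentially \cite[Lemma 2.5]{Zl13} adapted to the nonlocal setting and it controls how the threshold level set advances. (2) \textbf{A uniform modulus between $\la$ and $\la_*$ for $\la\ge\theta$.} For $\la\in[\theta,\la_*]$, because $f(t,u)\ge f_{\min}(u)>0$ on $(\theta,1)$, one can build a subsolution that is a suitably time-shifted and space-shifted copy of $\phi_{\min}$ sitting at height $\ge\la_*$ once the solution has exceeded $\theta$ somewhere for a bounded time; combined with step (1) this shows $X_{\la}(t;s)-X_{\la_*}(t;s)$ is bounded above for $\theta\le\la\le\la_*$. (3) \textbf{Pushing below $\theta$.} For $0<\la_1<\theta$ I need to control how far the tail $u=\la_1$ trails behind $u=\theta$. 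Here the reaction vanishes for $u\le\theta$, so on the region where $u\le\theta$ the equation is just $u_t=J*u-u$ (a linear contraction semigroup), and an exponential supersolution $e^{-c_{\min}^+(x-y_s-c(t-s))}$ as in Lemma \ref{lem-approximating-sol}, anchored against the known value $u=\theta$ at $X_\theta$, yields $X_{\la_1}(t;s)-X_\theta(t;s)\le \tfrac{1}{c_{\min}^+}\ln(\theta/\la_1)+C'$, uniformly. Chaining (2) and (3) through $X_\theta$ gives the claim for all $0<\la_1<\la_2\le\la_*$.

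The main obstacle I anticipate is step (2): producing a subsolution that certifies the solution actually \emph{rises} to a height $\ge\la_*$ (not merely stays $\ge\theta$) within a uniformly bounded time and uniformly bounded spatial lag, since $\phi_{\min}$ only moves at its own speed $c_{\min}$ and one must rule out the solution profile becoming arbitrarily shallow between $\theta$ and $\la_*$. The device is to use that $f_{\min}$ is of ignition type with $f_{\min}'(1)<0$: the homogeneous wave $\phi_{\min}(x-c_{\min}t)$ is itself a subsolution of \eqref{main-eqn} (since $f\ge f_{\min}$), and once $u(\cdot;s)\ge\theta$ on an interval of length comparable to the ``width'' of $\phi_{\min}$ between $\theta$ and $\la_*$ — which step (1) guarantees after a bounded time — a translate of $\phi_{\min}$ can be slipped underneath, forcing $u\ge\la_*$ a bounded distance to the left of $X_\theta(t;s)$. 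Quantifying ``a bounded time'' and ``a bounded interval'' uniformly in $s$ is where the careful bookkeeping lies, but it uses only the a priori bounds \eqref{initial-position}, \eqref{aprior-estimate-two-side1} and Lemma \ref{lem-app-exponential-decay}, all already available. The restriction to $\la_2\le\la_*$ rather than $\la_2<1$ is exactly because near $u=1$ the subsolution $\phi_{\min}$ flattens and this pushing argument degenerates; that gap is deferred to Corollary \ref{cor-bound-interface-width} after the decaying estimates are in hand.
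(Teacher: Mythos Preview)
Your proposal has a genuine gap in step (3), and the misunderstanding propagates backward into how you conceive of $\la_*$ and step (2).

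The central difficulty is this: to ``anchor'' an exponential supersolution $e^{-c(x-Z(t))}$ at $X_\theta(t;s)$ and apply Proposition~\ref{lem-comparison}(i) on the region $\{x>X_\theta(t;s)\}$, you must compute $v_t$, which involves $\dot Z(t)$. If $Z(t)=X_\theta(t;s)$ (or any function tied to it), you need $X_\theta(t;s)$ to be differentiable with a controlled derivative. But that is precisely what is \emph{not} available here --- the whole point of the section is to obtain propagation control in the absence of space regularity, and $X_\theta(t;s)$ has no a~priori differentiability or speed bound. Falling back on the global supersolution $e^{-c_{\min}^+(x-y_s-c(t-s))}$ from Lemma~\ref{lem-approximating-sol} does not help: it yields an upper bound on $X_{\la_1}(t;s)$ moving at the crude speed $c$, while your lower bound on $X_\theta(t;s)$ from $\phi_{\min}$ moves at speed $c_{\min}<c$, so their difference grows linearly in $t-s$ and the claimed uniform bound $\tfrac{1}{c_{\min}^+}\ln(\theta/\la_1)+C'$ does not follow.

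The paper's device to get around this is exactly the auxiliary interface $Y_\ka(t;s)=\inf\{y:u(t,x;s)\le e^{-\la_\ka(x-y)}\ \forall x\}$ (equation~\eqref{new-interface}). By construction $u\le e^{-\la_\ka(x-Y_\ka(t;s))}$, so $X_{\la_1}(t;s)\le Y_\ka(t;s)+\tfrac{1}{\la_\ka}\ln(1/\la_1)$ automatically, with no anchoring needed; and Lemma~\ref{lem-new-interface-bd} gives $Y_\ka(t;s)-Y_\ka(t_0;s)\le \tilde c_\ka(t-t_0)$ directly from the definition, again with no differentiability required. The hard step is then Proposition~\ref{prop-bd-width-aux}, which bounds $Y_\ka-X_\la$ by a contradiction argument balancing this upper speed bound against the rightward propagation estimate of Proposition~\ref{prop-rightward-propagation}; \emph{this} is where the Zlato\v{s} idea from \cite[Lemma~2.5]{Zl13} enters, not in your step~(1). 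Note also that $\la_*$ in the paper is chosen via $\la_*=\min\{u>0:f_{\max}(u)=\ka u\}$ so that $f(t,u)\le\ka u$ on $[0,\la_*]$ (equation~\eqref{a-condition-bounded-interface}), which is what makes the exponential barrier with the \emph{small} rate $\la_\ka$ a supersolution on the relevant region; the monotonicity \eqref{decreasing-near-1} near $1$ plays no role here and is used only later in Lemma~\ref{lem-reguarity}. Finally, the rightward propagation in Proposition~\ref{prop-rightward-propagation} is obtained not from $\phi_{\min}$ but from a bistable subsolution $\phi_B$ (so that stability in the sense of Lemma~\ref{lem-app-uniform-stable-bi} is available), which is what lets one restart the estimate from an arbitrary time $t_0$ knowing only $u(t_0,\cdot;s)\ge\la$ on a half-line --- your global bound $u\ge\phi_{\min}(\cdot-y_s-c_{\min}(t-s))$ cannot be restarted in this way.
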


Theorem \ref{thm-bounded-interface-width} shows the boundedness of the width between interfaces below $\la_{*}$. Later in Corollary \ref{cor-bound-interface-width}, it is extended to any $0<\la_{1}<\la_{2}<1$. The proof of Theorem \ref{thm-bounded-interface-width} is long, and therefore, broken into several parts. In Subsection \ref{subsec-progation-estimate}, we give rightward propagation estimates of interfaces above the ignition temperature. In Subsection \ref{subsec-aux-width}, we define new interface locations $Y_{\ka}(t;s)$ and establish the uniform boundedness between $Y_{\ka}(t;s)$ and $X_{\la}(t;s)$. We finish the proof of Theorem \ref{thm-bounded-interface-width} in Subsection \ref{subsec-proof}.


\subsection{Rightward propagation estimates of interface locations}\label{subsec-progation-estimate}

In this subsection, we study the rightward propagation nature of $X_{\la}(t;s)$. To do so, we need some knowledge on the traveling waves of
\begin{equation}\label{eqn-homo-bistable}
u_{t}=J\ast u-u+f_{B}(u)
\end{equation}
where $f_{B}:[0,1]\to\R$ is a bistable nonlinearity satisfying the following conditions
\begin{equation*}
\begin{cases}
f_{B}\in C^{2}([0,1]),\,\,f_{B}(0)=0,\,\,f_{B}(\theta)=0,\,\,f_{B}(1)=0,\\
f_{B}'(0)<0,\,\,f_{B}'(1)<0,\\
f_{B}(u)<0\,\,\text{for}\,\,u\in(0,\theta),\,\,0<f_{B}(u)\leq f_{\min}(u)\,\,\text{for}\,\,u\in(\theta,1),\\
\int_{0}^{1}f_{B}(u)du>0\,\,\text{and}\,\,1+f_{B}'(u)>0\,\,\text{for}\,\,u\in[0,1].
\end{cases}
\end{equation*}
It is proven in \cite{BaFiReWa97} that the problem
\begin{equation}\label{bistable-tw-homo}
\begin{cases}
J\ast u-u+cu_{x}+f_{B}(u)=0,\\
u_{x}<0,\,\, u(0)=\theta,\,\, u(-\infty)=1\,\,\text{and}\quad u(\infty)=0
\end{cases}
\end{equation}
admits a unique solution $(c_{B},\phi_{B})$ with $c_{B}>0$. Moreover, the following stability result holds:

\begin{lem}\label{lem-app-uniform-stable-bi}
Let $u_{0}:\R\to[0,1]$ be continuous and satisfy
\begin{equation*}
\limsup_{x\to\infty}u_{0}(x)<\theta<\liminf_{x\to-\infty}u_{0}(x).
\end{equation*}
Then, there exists $x_{B}=x_{B}(u_{0})\in\R$, $q_{B}=q_{B}(u_{0})>0$ and $\om_{B}>0$ such that
\begin{equation*}
u_{B}(t,x;u_{0})\geq\phi_{B}(x-x_{B}-c_{B}t)-q_{B}e^{-\om_{B}t},\quad x\in\R
\end{equation*}
for all $t\geq0$, where $u_{B}(t,x;u_{0})$ is the solution of \eqref{eqn-homo-bistable} with initial data $u_{B}(0,\cdot;u_{0})=u_{0}$.
\end{lem}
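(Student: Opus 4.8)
The plan is to run a Fife--McLeod type stability argument adapted to the nonlocal operator, in two stages: a \emph{front--formation} stage that reduces the comparison to a later time, and then the construction of an explicit shifted-and-relaxed subsolution built out of $\phi_B$. Two structural facts are needed first. Since $f_B'(0)<0$ and $f_B'(1)<0$, after extending $f_B$ to a $C^1$ function on $(-\epsilon_0,1+\epsilon_0)$ keeping the sign behaviour near the endpoints, there are $\beta>0$ and $\delta_0\in(0,\tfrac14)$ with $f_B'(u)\le-\beta$ for $u\in(-\epsilon_0,2\delta_0]\cup[1-2\delta_0,1+\epsilon_0)$; and since $\phi_B\in C^1(\R)$ is strictly decreasing with $\phi_B(-\infty)=1$, $\phi_B(+\infty)=0$, the set $\{\phi_B\in[\delta_0,1-\delta_0]\}$ is a compact interval on which $-\phi_B'\ge m$ for some $m>0$. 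Finally, because $J$ maps a translate of $\phi_B$ to the corresponding translate of $J\ast\phi_B$ and $J\ast c=c$ for constants $c$, substituting $v(t,x)=\phi_B(\eta(t,x))-q(t)$ with $\eta(t,x)=x-c_Bt-\xi(t)-\bar x$ into \eqref{eqn-homo-bistable} and using \eqref{bistable-tw-homo} leaves no residual nonlocal term:
\[
v_t-(J\ast v-v)-f_B(v)=-\phi_B'(\eta)\dot\xi(t)-\dot q(t)+f_B(\phi_B(\eta))-f_B(\phi_B(\eta)-q(t)).
\]

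I would then take $q(t)=q_0e^{-\omega t}$ with $0<\omega\le\beta$ and $q_0<\min\{\epsilon_0,\delta_0\}$ (so $v$ stays in $(-\epsilon_0,1)$), and $\dot\xi(t)=-\tfrac{\|f_B'\|_\infty+\omega}{m}\,q_0e^{-\omega t}$, which makes $\xi$ nonpositive, bounded, and $\dot\xi\le0$. The claim $v_t-(J\ast v-v)-f_B(v)\le0$ is then verified by splitting into three ranges of $\phi_B(\eta)$. If $\phi_B(\eta)\ge1-\delta_0$ or $\phi_B(\eta)\le\delta_0$, the mean-value point in $f_B(\phi_B(\eta))-f_B(\phi_B(\eta)-q)$ lies in the spectral-gap region, so this difference is $\le-\beta q$, which together with $-\dot q=\omega q\le\beta q$ and the extra term $-\phi_B'(\eta)\dot\xi\le0$ (as $\phi_B'<0$, $\dot\xi\le0$) gives a nonpositive total. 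If $\delta_0<\phi_B(\eta)<1-\delta_0$, then $\eta$ lies in the compact interval above, $-\phi_B'(\eta)\ge m$, hence $-\phi_B'(\eta)\dot\xi\le m\dot\xi$, and $m\dot\xi-\dot q+\|f_B'\|_\infty q\le0$ by the choice of $\dot\xi$. Thus $v$ is a subsolution of \eqref{eqn-homo-bistable} on $\{t\ge0\}$ for every $\bar x$.

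It remains to fit such a $v$ under the data at some time. Since $\liminf_{x\to-\infty}u_0(x)>\theta$, there are $\theta_1\in(\theta,1)$ and $R>0$ with $u_0\ge\theta_1$ on $(-\infty,-R]$; because $f_B$ is bistable with $\int_0^1 f_B>0$, a compactly supported spreading subsolution for the nonlocal bistable equation (in the spirit of \cite{BaFiReWa97} and \cite{Cov-thesis}) shows that for some $T_0>0$ and $a\in\R$ one has $u_B(T_0,x;u_0)\ge1-\tfrac{q_0}{2}$ for $x\le a$; and $u_B(T_0,\cdot;u_0)\ge0$ everywhere by comparison with the state $0$. Choosing $\bar x\ge a-\phi_B^{-1}(q_0)$ makes $\phi_B(\cdot-\bar x)-q_0\le u_B(T_0,\cdot;u_0)$ on all of $\R$. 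Applying the comparison principle (Proposition~\ref{lem-comparison}) from time $T_0$ to $v(t-T_0,\cdot)$, and absorbing the bounded quantities $\xi(t-T_0)$ and $c_BT_0$ into a single translate $x_B$, gives $u_B(t,x;u_0)\ge\phi_B(x-c_Bt-x_B)-q_0e^{\omega T_0}e^{-\omega t}$ for $t\ge T_0$; for $t\in[0,T_0]$ the right-hand side is $\le1-q_Be^{-\omega t}\le0\le u_B$ as soon as $q_B\ge e^{\omega T_0}$. Taking $q_B=e^{\omega T_0}\max\{1,q_0\}$ and $\omega_B=\omega$ then yields the lemma, with $x_B=x_B(u_0)$ and $q_B=q_B(u_0)$ entering only through $T_0$ and $a$.

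The main obstacle is the front-formation stage: producing the compactly supported spreading subsolution for the nonlocal bistable operator and controlling the nonlocal ``leakage'' at the edges of its support. This is where the absence of a local maximum principle and the smoothing by $J\ast$ make the construction more delicate than in the classical random-dispersal case, and it is the step I would most carefully either import from \cite{BaFiReWa97,Cov-thesis} or reprove via a mollified, slightly lowered block $\theta_1\,\chi_{[-R,-R+L_0]}$ with $L_0$ large. Everything else — the spectral-gap versus steepness dichotomy and the exponential relaxation of $q$ and $\xi$ — is routine once the algebra above is set up.
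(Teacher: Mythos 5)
The paper does not supply its own proof of this lemma: it is quoted as a stability result inherited from Bates--Fife--Ren--Wang \cite{BaFiReWa97}, with no argument given. Your proposal is a correct reconstruction of the standard Fife--McLeod sub-solution argument as adapted to the convolution operator, which is precisely the method used in that reference, so there is no divergence in approach to report. The key algebraic reduction -- that $J\ast$ commutes with translation and annihilates the constant shift $q(t)$, so the residual of $v=\phi_B(\eta)-q$ contains no nonlocal term and reduces to $-\phi_B'(\eta)\dot\xi-\dot q+f_B(\phi_B(\eta))-f_B(\phi_B(\eta)-q)$ -- is exactly right, and your spectral-gap/steepness dichotomy closes both cases with the stated choices of $\omega\le\beta$ and $\dot\xi$.

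Two small points. First, a sign slip: to make $\phi_B(\cdot-\bar x)-q_0\le u_B(T_0,\cdot;u_0)$ on all of $\R$ you need $\phi_B(x-\bar x)\le q_0$ for $x\ge a$ (where you only know $u_B\ge0$), i.e.\ $a-\bar x\ge\phi_B^{-1}(q_0)$, so the correct condition is $\bar x\le a-\phi_B^{-1}(q_0)$, not $\bar x\ge a-\phi_B^{-1}(q_0)$. Second, you correctly single out the front-formation stage (driving the solution above $1-q_0/2$ on a left half-line in finite time via a compactly supported spreading subsolution) as the one place that genuinely rests on imported material from \cite{BaFiReWa97,Cov-thesis}; since the paper itself merely cites that reference for the whole lemma, this is exactly the appropriate division between what you reprove and what you import.
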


The main result in this subsection is stated in the following proposition.

\begin{prop}\label{prop-rightward-propagation}
Let $\la\in(\theta,1)$. For any $\ep>0$ there exists $t_{\ep,\la}>0$ such that
\begin{equation*}
X_{\la}(t;s)-X_{\la}(t_{0};s)\geq(c_{B}-\ep)(t-t_{0}-t_{\ep,\la})
\end{equation*}
for all $s<0$, $t\geq t_{0}\geq s$.
\end{prop}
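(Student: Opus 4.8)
The plan is to derive the rightward propagation estimate for $X_\la(t;s)$ with $\la\in(\theta,1)$ by comparison with the bistable traveling wave $\phi_B(x-c_Bt)$ of \eqref{eqn-homo-bistable}. The key observation is that $f(t,u)\ge f_{\min}(u)\ge f_B(u)$ on $[0,1]$, so any solution of \eqref{main-eqn} is a supersolution of the bistable equation \eqref{eqn-homo-bistable}, and hence Lemma \ref{lem-app-uniform-stable-bi} applies to it. Concretely, first I would fix $s<0$ and $t_0\ge s$ and set $u_0(\cdot)=u(t_0,\cdot;s)$ as the initial datum; by Lemma \ref{lem-approximating-sol-prop}(i) we have $u(t_0,-\infty;s)=1>\theta>0=u(t_0,\infty;s)$, so the hypothesis $\limsup_{x\to\infty}u_0(x)<\theta<\liminf_{x\to-\infty}u_0(x)$ of Lemma \ref{lem-app-uniform-stable-bi} is met. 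Then the comparison principle (Proposition \ref{lem-comparison}), using $f(t,u)\ge f_B(u)$, gives
\begin{equation*}
u(t_0+\tau,x;s)\ \ge\ u_B(\tau,x;u_0)\ \ge\ \phi_B\big(x-x_B-c_B\tau\big)-q_Be^{-\om_B\tau},\qquad x\in\R,\ \tau\ge0,
\end{equation*}
where $x_B=x_B(u_0)$ and $q_B=q_B(u_0)$ come from Lemma \ref{lem-app-uniform-stable-bi}.

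The main obstacle — and the reason the statement is phrased with a fixed $t_{\ep,\la}$ rather than a clean linear bound — is that $x_B(u_0)$ and $q_B(u_0)$ depend on the initial datum $u_0=u(t_0,\cdot;s)$, and a priori this dependence is not uniform in $s$ and $t_0$. So the real work is to show that these constants can be chosen uniformly over all the approximating solutions. Here I would exploit the two-sided a priori bound \eqref{aprior-estimate-two-side1} from Lemma \ref{lem-approximating-sol}, which sandwiches every $u(t_0,\cdot;s)$ between a fixed shift of $\phi_{\min}$ and a fixed exponential, both controlled in terms of the interface position. Combined with the normalization $u(0,0;s)=\theta$ and the rightward spreading of the lower barrier $\phi_{\min}(\cdot-y_s-c_{\min}(t-s))$, one gets that the location where $u(t_0,\cdot;s)$ crosses, say, $\theta$ is trapped in a window depending on $t_0-s$ but with slopes/shapes uniformly controlled; translating $u_0$ so that its $\theta$-level set sits at the origin, the family of translated initial data lies in a compact set (in the locally uniform topology) bounded below and above by fixed profiles, whence $x_B$ and $q_B$ can be taken from a single pair $(\bar x_B,\bar q_B)$ independent of $s$ and $t_0$. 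This is exactly the kind of uniformity argument that Lemma \ref{lem-app-uniform-stable-bi} is set up to feed into.

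Granting the uniform constants $\bar x_B,\bar q_B,\om_B$, I would finish as follows. Evaluate the lower bound at the point $x=X_\la(t_0+\tau;s)$ where $u=\la$: since $\phi_B$ is strictly decreasing with $\phi_B(-\infty)=1>\la$, pick $\xi_\la$ with $\phi_B(\xi_\la)=\la+\mu$ for a small fixed $\mu>0$; then for $\tau$ large enough that $\bar q_B e^{-\om_B\tau}\le\mu$, the inequality $\la=u(t_0+\tau,X_\la(t_0+\tau;s);s)\ge\phi_B(X_\la(t_0+\tau;s)-\bar x_B-c_B\tau)-\mu$ forces $\phi_B(X_\la(t_0+\tau;s)-\bar x_B-c_B\tau)\le\la+\mu=\phi_B(\xi_\la)$, hence $X_\la(t_0+\tau;s)-\bar x_B-c_B\tau\ge\xi_\la$, i.e. $X_\la(t_0+\tau;s)\ge c_B\tau+\bar x_B+\xi_\la$. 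To get the $(c_B-\ep)$ form stated, absorb the constants: write $t=t_0+\tau$, so $X_\la(t;s)\ge c_B(t-t_0)+(\bar x_B+\xi_\la)$ for $t-t_0\ge\tau_0$ (with $\tau_0$ depending only on $\ep,\la$ through $\mu$), and since $X_\la(t;s)-X_\la(t_0;s)\ge X_\la(t;s)-X_\la(t_0;s)$ with $X_\la(t_0;s)$ bounded above by $c_{\min}s - y_s + O(1)$ on the relevant range — more cleanly, just note $c_B(t-t_0)+C\ge(c_B-\ep)(t-t_0-t_{\ep,\la})$ once $t_{\ep,\la}$ is chosen so that $\ep\, t_{\ep,\la}\ge C + \ep\tau_0 + \sup(X_\la(t_0;s)-X_\la(t_0;s))$, using a crude upper bound on $X_\la(t_0;s)$ relative to the origin that again follows from \eqref{aprior-estimate-two-side1} — one obtains the claimed estimate for all $s<0$, $t\ge t_0\ge s$. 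I would present the constant-juggling in the last step only schematically, since it is routine once the uniform barrier is in hand; the crux is the uniformity of $x_B,q_B$ across the approximating family.
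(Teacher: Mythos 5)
Your basic idea — compare $u(t,\cdot;s)$ with the bistable equation \eqref{eqn-homo-bistable} via $f\ge f_{\min}\ge f_B$ and then invoke the convergence result of Lemma \ref{lem-app-uniform-stable-bi} — is the same as the paper's, but there are two genuine gaps in the execution.

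\textbf{The uniformity of $x_B, q_B$.} You take $u_0=u(t_0,\cdot;s)$ directly and then try to argue by compactness that $x_B(u_0),q_B(u_0)$ can be chosen uniformly. But at this stage of the paper the family $\{u(t_0,\cdot;s)\}$ is only known to be continuous and decreasing in $x$ — no equicontinuity has been established (uniform Lipschitz continuity is only proved much later, in Lemma \ref{lem-reguarity}, and its proof \emph{uses} consequences of this very proposition). So the translated initial data are not a priori precompact in any useful topology, and even if they were, Lemma \ref{lem-app-uniform-stable-bi} says nothing about continuous dependence of $x_B,q_B$ on $u_0$. The paper sidesteps this entirely with a simple trick you miss: it fixes \emph{one} profile $u_0$ (nonincreasing, equal to $\la$ on $(-\infty,x_0]$ and to $0$ on $[0,\infty)$), independent of $s$ and $t_0$, and observes that monotonicity gives $u(t_0,x+X_\la(t_0;s);s)\ge u_0(x)$. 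Then $x_B(u_0),q_B(u_0)$ are just fixed numbers and uniformity is automatic; the $s,t_0$-dependence has been absorbed into the shift $X_\la(t_0;s)$, which is exactly the quantity you want to subtract anyway.

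\textbf{The short-time estimate.} The bistable lower bound $\phi_B(x-x_B-c_B\tau)-q_Be^{-\om_B\tau}$ is useless for small $\tau$, where the error term is of order $q_B$ and can exceed $1$. So the comparison only yields $X_\la(t;s)-X_\la(t_0;s)\ge c_B(t-t_0)+\text{const}$ for $t-t_0\ge T_0$, and you still need a uniform lower bound on $X_\la(t;s)-X_\la(t_0;s)$ for $t-t_0\in[0,T_0]$. You dismiss this as ``constant-juggling,'' but it is not: since $X_\la(\cdot;s)$ is not monotone in time, there is no a priori reason it cannot dip far to the left on $[t_0,t_0+T_0]$. The paper devotes a substantial portion of the proof (the argument around \eqref{estimate-finite-time} and \eqref{not-to-negative-finity}) to exactly this, comparing with the homogeneous ignition equation $u_t=J\ast u-u+f_{\min}(u)$ starting from $u_0$, showing that its $\la$-level set $\xi_{\min}(t)$ stays bounded below on $(0,T_0]$, which takes a careful estimate on $\frac{d}{dt}u_{\min}(t,x;u_0)$ far to the left. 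Also, your closing inequality involving ``$\sup(X_\la(t_0;s)-X_\la(t_0;s))$'' is not coherent as written; what is actually needed is a bound on $X_\la(t;s)-X_\la(t_0;s)$, not on $X_\la(t_0;s)$ alone, and that is precisely what the missing short-time argument provides.
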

\begin{proof}
Fix some $\la\in(\theta,1)$. Let $u_{0}:\R\to[0,1]$ be a uniformly continuous and nonincreasing function satisfying $u_{0}(x)=\la$ for $x\leq x_{0}$ and $u_{0}(x)=0$ for $x\geq0$, where $x_{0}<0$ is fixed.
Clearly, space monotonicity of $u(t,x;s)$ implies that
\begin{equation*}
u(t_{0},x+X_{\la}(t_{0};s);s)\geq u_{0}(x),\quad x\in\R,\,\,t_{0}\geq s,
\end{equation*}
and then, by $f(t,u)\geq f_{\min}(u)\geq f_{B}(u)$ and the comparison principle, we find
\begin{equation*}
u(t,x+X_{\la}(t_{0};s);s)\geq u_{B}(t-t_{0},x;u_{0}),\quad x\in\R,\,\,t\geq t_{0}\geq s.
\end{equation*}
By Lemma \ref{lem-app-uniform-stable-bi}, there are constants $x_{B}=x_{B}(\la)\in\R$, $q_{B}=q_{B}(\la)>0$ and $\om_{B}>0$ such that
\begin{equation*}
u_{B}(t-t_{0},x;u_{0})\geq\phi_{B}(x-x_{B}-c_{B}(t-t_{0}))-q_{B}e^{-\om_{B}(t-t_{0})}, \quad x\in\R,\,\,t\geq t_{0}\geq s.
\end{equation*}
Hence,
\begin{equation*}
u(t,x+X_{\la}(t_{0};s);s)\geq \phi_{B}(x-x_{B}-c_{B}(t-t_{0}))-q_{B}e^{-\om_{B}(t-t_{0})}, \quad x\in\R,\,\,t\geq t_{0}\geq s.
\end{equation*}
Let $T_{0}=T_{0}(\la)$ be such that $q_{B}e^{-\om_{B}T_{0}}=\frac{1-\la}{2}$ and denote by $\xi_{B}(\frac{1+\la}{2})$ the unique point such that $\phi_{B}(\xi_{B}(\frac{1+\la}{2}))=\frac{1+\la}{2}$. Setting $x=x_{B}+c_{B}(t-t_{0})+\xi_{B}(\frac{1+\la}{2})$, we find for $t\geq t_{0}+T_{0}$
\begin{equation*}
u(t,x_{B}+c_{B}(t-t_{0})+\xi_{B}(\frac{1+\la}{2})+X_{\la}(t_{0};s);s)\geq \phi_{B}(\xi_{B}(\frac{1+\la}{2}))-q_{B}e^{-\om_{B}T_{0}}=\la,
\end{equation*}
which together with monotonicity implies that
\begin{equation}\label{estimate-long-time}
X_{\la}(t;s)-X_{\la}(t_{0};s)\geq x_{B}+c_{B}(t-t_{0})+\xi_{B}(\frac{1+\la}{2}),\quad t\geq t_{0}+T_{0}.
\end{equation}

We now estimate $X_{\la}(t;s)-X_{\la}(t_{0};s)$ for $t\in[t_{0},t_{0}+T_{0}]$. We claim that there exists $z=z(T_{0})<0$ such that
\begin{equation}\label{estimate-finite-time}
X_{\la}(t;s)-X_{\la}(t_{0};s)\geq z\quad\text{for all}\,\,s<0,\,\,s\leq t_{0}\leq t\leq t_{0}+T_{0}.
\end{equation}
Let $u_{\min}(t,x;u_{0})$ and $u_{\min}(t;\la):=u_{\min}(t,x;\la)$ be solutions of \eqref{main-eqn-perturb-homo} with $u_{\min}(0,x;u_{0})=u_{0}(x)$ and $u_{\min}(0;\la)=u_{\min}(0,x;\la)\equiv\la$, respectively. By the comparison principle, we have $u_{\min}(t,x;u_{0})<u_{\min}(t;\la)$ for all $x\in\R$ and $t>0$, and $u_{\min}(t,x;u_{0})$ is strictly decreasing in $x$ for $t>0$.

We see that for any $t>0$, $\lim_{x\to-\infty}u_{\min}(t,x;u_{0})=u_{\min}(t;\la)$. This is because that $\frac{d}{dt}u_{\min}(t,-\infty;u_{0})=f_{\min}(u_{\min}(t,-\infty;u_{0}))$ for $t>0$ and $u_{\min}(0,-\infty;u_{0})=\la$. Since $\la\in(\theta,1)$, as a solution of the ODE $u_{t}=f_{\min}(u)$, $u_{\min}(t;\la)$ is strictly increasing in $t$, which implies that $u_{\min}(t,-\infty;u_{0})=u_{\min}(t;\la)>\la$ for $t>0$. As a result, for any $t>0$ there exists a unique $\xi_{\min}(t)\in\R$ such that $u_{\min}(t,\xi_{\min}(t);u_{0})=\la$. Moreover, $\xi_{\min}(t)$ is continuous in $t$.

Since $f(t,u)\geq f_{\min}(u)$ and $u(t_{0},\cdot+X_{\la}(t_{0};s);s)\geq u_{0}$, the comparison principle implies that
\begin{equation*}
u(t,x+X_{\la}(t_{0};s);s)\geq u_{\min}(t-t_{0},x;u_{0}),\quad x\in\R,\,\,t\geq t_{0}\geq s.
\end{equation*}
Setting $x=\xi_{\min}(t-t_{0})$, we find $u(t,\xi_{\min}(t-t_{0})+X_{\la}(t_{0};s);s)\geq\la$, which together with the monotonicity implies that
$X_{\la}(t;s)\geq\xi_{\min}(t-t_{0})+X_{\la}(t_{0};s)$ for $t\geq t_{0}\geq s$.
Thus, \eqref{estimate-finite-time} follows if $\inf_{t\in(t_{0},t_{0}+T_{0}]}\xi_{\min}(t-t_{0})>-\infty$, that is,
\begin{equation}\label{not-to-negative-finity}
\inf_{t\in(0,T_{0}]}\xi_{\min}(t)>-\infty.
\end{equation}

We now show \eqref{not-to-negative-finity}. Since $u_{0}(x)=\la$ for $x\leq x_{0}$, continuity with respect to the initial data
 (in sup norm) implies that for any $\ep>0$ there exists $\de>0$ such that
\begin{equation*}
u_{\min}(t;\la)-\la\leq\ep\quad\text{and}\quad\sup_{x\leq x_{0}}[u_{\min}(t;\la)-u_{\min}(t,x;u_{0})]=u_{\min}(t;\la)-u_{\min}(t,x_{0};u_{0})\leq\ep
\end{equation*}
for all $t\in[0,\de]$, where the equality is due to monotonicity. By $\rm(H1)$, $J$ concentrates near $0$ and decays very fast as $x\to\pm\infty$. Thus, we can choose $x_{1}=x_{1}(\ep)<<x_{0}$ such that $\int_{-\infty}^{x_{0}}J(x-y)dy\geq1-\ep$ for all
$x\le x_1$. Now, for any $x\leq x_{1}$ and $t\in(0,\de]$, we have
\begin{equation*}
\begin{split}
\frac{d}{dt}u_{\min}(t,x;u_{0})&=\int_{\R}J(x-y)u_{\min}(t,y;u_{0})dy-u_{\min}(t,x;u_{0})+f_{\min}(u_{\min}(t,x;u_{0}))\\
&\geq\int_{-\infty}^{x_{0}}J(x-y)u_{\min}(t,y;u_{0})dy-u_{\min}(t,x;u_{0})+f_{\min}(u_{\min}(t,x;u_{0}))\\
&\geq(1-\ep)\inf_{x\leq x_{0}}u_{\min}(t,x;u_{0})-u_{\min}(t;\la)+f_{\min}(u_{\min}(t,x;u_{0}))\\
&=-(1-\ep)\sup_{x\leq x_{0}}[u_{\min}(t;\la)-u_{\min}(t,x;u_{0})]-\ep u_{\min}(t;\la)+f_{\min}(u_{\min}(t,x;u_{0}))\\
&\geq-\ep(1-\ep)-\ep(\la+\ep)+f_{\min}(u_{\min}(t,x;u_{0}))\\
&>0
\end{split}
\end{equation*}
if we choose $\ep>0$ sufficiently small, since then $f_{\min}(u_{\min}(t,x;u_{0}))$ is close to $f_{\min}(\la)$, which is positive. This simply means that $u_{\min}(t,x;u_{0})>\la$ for all $x\leq x_{1}$ and $t\in(0,\de]$, which implies that $\xi_{\min}(t)>x_{1}$ for $t\in(0,\de]$. The continuity of $\xi_{\min}$ then leads to \eqref{not-to-negative-finity}. This proves \eqref{estimate-finite-time}. The result of the proposition then follows from \eqref{estimate-long-time} and \eqref{estimate-finite-time}.
\end{proof}


\subsection{Auxiliary interface width estimates}\label{subsec-aux-width}

In this subsection, we define a new interface location and study the boundedness between this new interface location and the original ones.

For $\ka>0$, set $c_{\ka}:=\inf_{\la>0}\frac{1}{\la}\big(\int_{\R}J(y)e^{\la y}dy-1+\ka\big)>0$. It is not hard to see that there exists a unique $\la_{\ka}>0$ such that
\begin{equation}\label{minimal-point}
c_{\ka}=\frac{1}{\la_{\ka}}\bigg(\int_{\R}J(y)e^{\la_{\ka}y}dy-1+\ka\bigg).
\end{equation}
We point out (see \cite{CaCh04}) that if the support of $J(\cdot)$ is compact, then  $c_{\ka}$ is  the minimal speed, denoted by $c_{\rm KPP}$,  of the traveling waves of the following KPP equation,
\begin{equation*}
u_{t}=J\ast u-u+f_{\rm KPP}(u)
\end{equation*}
with $f_{\rm KPP}'(0)=\ka$, where $f_{\rm KPP}(0)=0$ and $f_{\rm KPP}(u)/u$ is strictly decreasing in $u\ge 0$.  If $J$ is not compactly supported, it is known that $c_{\rm KPP}\leq c_\ka$ (see \cite{CoDu05}). As in the classical random dispersal case, we have

\begin{lem}\label{lem-aux-minimal-speed}
$c_{\ka}\to0$ and $\la_{\ka}\to0$ as $\ka\ra0$.
\end{lem}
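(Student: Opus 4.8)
The plan is to analyze the explicit formula defining $c_\ka$ and show both limits follow from elementary estimates on the moment generating function $M(\la):=\int_\R J(y)e^{\la y}\,dy$. First I would record basic properties of $M$: by (H1), $M$ is finite and smooth on $[0,\infty)$ (even on all of $\R$ since $J$ has faster-than-exponential decay at $\pm\infty$), with $M(0)=1$, $M'(0)=\int_\R yJ(y)\,dy=0$ by symmetry, and $M''(\la)=\int_\R y^2 J(y)e^{\la y}\,dy>0$, so $M$ is strictly convex with a minimum at $\la=0$. Consequently $g_\ka(\la):=\frac{1}{\la}(M(\la)-1+\ka)$ is, for each fixed $\ka>0$, a smooth positive function on $(0,\infty)$ that tends to $+\infty$ as $\la\to0^+$ (because $M(\la)-1\to0$ but the $\ka/\la$ term blows up) and, since $M(\la)$ grows super-exponentially, also tends to $+\infty$ as $\la\to\infty$; hence it attains its infimum $c_\ka$ at an interior critical point $\la_\ka$, which gives existence (and with strict convexity of $M$ one gets uniqueness of $\la_\ka$, though that is already asserted in the excerpt).

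Next I would establish $c_\ka\to0$. The key observation is the upper bound obtained by evaluating $g_\ka$ at a cleverly chosen $\la$ rather than at the optimal one: for any fixed $\la>0$,
\begin{equation*}
c_\ka \;=\; \inf_{\mu>0} g_\ka(\mu) \;\le\; g_\ka(\la) \;=\; \frac{M(\la)-1+\ka}{\la}.
\end{equation*}
Since $M(\la)-1\to0$ as $\la\to0$, given $\delta>0$ I can first pick $\la$ so small that $(M(\la)-1)/\la<\delta/2$, and then for all sufficiently small $\ka$ we have $\ka/\la<\delta/2$, so $c_\ka<\delta$. Combined with $c_\ka>0$ this yields $\lim_{\ka\to0}c_\ka=0$. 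A mild technical point: one should choose $\la$ depending on $\delta$ first and let $\ka\to0$ afterward, which is the only place any care is needed in the quantifier order.

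For $\la_\ka\to0$, I would exploit the critical point equation. Differentiating $g_\ka$ and setting $g_\ka'(\la_\ka)=0$ gives $\la_\ka M'(\la_\ka)=M(\la_\ka)-1+\ka$, so that
\begin{equation*}
c_\ka \;=\; \frac{M(\la_\ka)-1+\ka}{\la_\ka} \;=\; M'(\la_\ka).
\end{equation*}
Since $M'$ is continuous, strictly increasing on $[0,\infty)$ (by strict convexity of $M$), and $M'(0)=0$, the relation $M'(\la_\ka)=c_\ka\to0$ forces $\la_\ka\to0$; more precisely, $\la_\ka=(M')^{-1}(c_\ka)$ and $(M')^{-1}$ is continuous at $0$ with $(M')^{-1}(0)=0$. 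Alternatively, without invoking the critical point identity, one can argue directly: from $c_\ka\le g_\ka(\la_\ka)$ and the lower bound $g_\ka(\la)\ge \ka/\la$ (valid because $M(\la)\ge M(0)=1$), we get $\ka/\la_\ka\le c_\ka\to0$, which alone does not bound $\la_\ka$; so one also uses that $M(\la_\ka)-1\le \la_\ka c_\ka$, and since $M(\la)-1$ grows at least linearly for $\la$ bounded away from $0$ (indeed $M(\la)-1\ge \frac{1}{2}\la^2\inf_{[0,\la]}M''$), this pins $\la_\ka$ down to $0$. I expect the main (and only genuine) obstacle to be purely organizational: making sure the order of limits in the $c_\ka\to0$ argument is handled correctly and choosing the cleanest route for $\la_\ka\to0$ — the critical-point identity $c_\ka=M'(\la_\ka)$ is by far the slickest and is what I would use.
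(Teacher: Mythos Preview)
Your proof is correct and follows essentially the same approach as the paper: an upper bound $c_\ka\le g_\ka(\la)$ at a well-chosen $\la$ for the first limit, and the critical-point equation for the second. The only cosmetic differences are that the paper takes the single test point $\la=\sqrt{\ka}$ (avoiding your two-step quantifier argument), and for $\la_\ka\to0$ it inverts the critical equation in the form $\la_\ka M'(\la_\ka)-M(\la_\ka)+1=\ka$ directly rather than via your slicker identity $c_\ka=M'(\la_\ka)$.
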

\begin{proof}
We see
\begin{equation*}
c_{\ka}\leq\frac{1}{\sqrt{\ka}}\bigg(\int_{\R}J(y)e^{\sqrt{\ka}y}dy-1+\ka\bigg)\to0\quad\text{as}\quad\ka\to0,
\end{equation*}
since $\frac{1}{\sqrt{\ka}}\big(\int_{\R}J(y)e^{\sqrt{\ka}y}dy-1\big)\to0$ as $\ka\to0$.

We show $\la_{\ka}\to0$ as $\ka\ra0$. It is understood that $\la_{\ka}$ is the unique positive point such that $\frac{d}{d\la}\big[\frac{1}{\la}\big(\int_{\R}J(y)e^{\la y}dy-1+\ka\big)\big]=0$, that is,
\begin{equation*}
\la\int_{\R}J(y)ye^{\la y}dy-\int_{\R}J(y)e^{\la y}dy+1=\ka.
\end{equation*}
Setting $g(\la):=\la\int_{\R}J(y)ye^{\la y}dy-\int_{\R}J(y)e^{\la y}dy+1$, we see $g(0)=0$ and
\begin{equation*}
g'(\la)=\la\int_{\R}J(y)y^{2}e^{\la y}dy>0\quad\text{for}\,\,\la>0.
\end{equation*}
This simply means that the unique solution of $g(\la)=\ka$ goes to $0$ as $\ka\to0$. This completes the proof.
\end{proof}

For $\ka>0$, $s<0$ and $t\geq s$, define
\begin{equation}\label{new-interface}
Y_{\ka}(t;s)=\inf\Big\{y\in\R\Big|u(t,x;s)\leq e^{-\la_{\ka}(x-y)},\quad x\in\R\Big\}.
\end{equation}
Due to the second estimate in \eqref{aprior-estimate-two-side1}, $Y_{\ka}(t;s)$ is well-defined if $\la_{\ka}\leq c_{\min}^{+}$, which is the case if $\ka$ is sufficiently small due to Lemma \ref{lem-aux-minimal-speed}. Notice the definition does not guarantee any continuity or monotonicity of $Y_{\ka}(t;s)$ since $u(t,x;s)$ is not monotone in time. The following result controls the propagation of $Y_{\ka}(t;s)$.

\begin{lem}\label{lem-new-interface-bd}
Let $\ka_{0}=\sup_{u\in(0,1)}\frac{f_{\max}(u)}{u}$. For $\ka>0$, set  $\tilde{c}_{\ka}:=\frac{1}{\la_{\ka}}\big(\int_{\R}J(y)e^{\la_{\ka}y}dy-1+\ka_{0}\big)$, where $\la_{\ka}$ is given in \eqref{minimal-point}. Then, for any small $\ka>0$, we have $\tilde{c}_{\ka}>0$ and
\begin{equation*}
Y_{\ka}(t;s)-Y_{\ka}(t_{0};s)\leq\tilde{c}_{\ka}(t-t_{0})
\end{equation*}
for all $s<0$, $t\geq t_{0}\geq s$.
\end{lem}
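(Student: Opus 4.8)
The plan is to construct, starting from the initial time $t_{0}$, an explicit exponentially decaying supersolution of \eqref{main-eqn} that travels rightward with speed $\tilde{c}_{\ka}$ and dominates $u(\cdot,\cdot;s)$ at time $t_{0}$; the comparison principle then traps $u(t,\cdot;s)$ beneath it for all $t\ge t_{0}$, which is precisely the claimed bound on $Y_{\ka}$. This is the same mechanism already used for the upper estimate in Lemma \ref{lem-approximating-sol}.

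First I would check that $\tilde{c}_{\ka}>0$. By $\rm(H1)$ we have $J\ge0$, $\int_{\R}J=1$ and $\int_{\R}J(y)y\,dy=0$, so Jensen's inequality yields $\int_{\R}J(y)e^{\la y}\,dy\ge e^{\la\int_{\R}J(y)y\,dy}=1$ for every $\la>0$; hence $\int_{\R}J(y)e^{\la_{\ka}y}\,dy-1+\ka_{0}\ge\ka_{0}$. Since $f_{\max}>0$ on $(\theta,1)$ we have $0<\ka_{0}<\infty$, and $\la_{\ka}>0$ by \eqref{minimal-point}, so $\tilde{c}_{\ka}>0$. Moreover, by Lemma \ref{lem-aux-minimal-speed}, $\la_{\ka}\le c_{\min}^{+}$ for all sufficiently small $\ka$, so that $Y_{\ka}(t;s)$ is well-defined in that range.

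Now fix $s<0$ and $t_{0}\ge s$ and set $A:=Y_{\ka}(t_{0};s)$. The infimum in \eqref{new-interface} is attained, since $u(t_{0},x;s)\le e^{-\la_{\ka}(x-y)}$ is equivalent to $y\ge x+\la_{\ka}^{-1}\ln u(t_{0},x;s)$, so $A=\sup_{x\in\R}\big[x+\la_{\ka}^{-1}\ln u(t_{0},x;s)\big]$ and thus $u(t_{0},x;s)\le e^{-\la_{\ka}(x-A)}$ for all $x\in\R$. Define $v(t,x):=e^{-\la_{\ka}(x-A-\tilde{c}_{\ka}(t-t_{0}))}$ for $t\ge t_{0}$, $x\in\R$. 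Using $[J\ast v](t,x)=v(t,x)\int_{\R}J(z)e^{\la_{\ka}z}\,dz$ and the defining relation $\la_{\ka}\tilde{c}_{\ka}=\int_{\R}J(y)e^{\la_{\ka}y}\,dy-1+\ka_{0}$, one computes
\[
v_{t}-[J\ast v-v]=\Big[\la_{\ka}\tilde{c}_{\ka}-\int_{\R}J(z)e^{\la_{\ka}z}\,dz+1\Big]v=\ka_{0}v\ge f(t,v),
\]
the last inequality because $f(t,w)\le f_{\max}(w)\le\ka_{0}w$ for $w\in[0,1]$ and $f(t,w)<0\le\ka_{0}w$ for $w>1$. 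So $v$ is a supersolution of \eqref{main-eqn} on $[t_{0},\infty)$ with $v(t_{0},\cdot)\ge u(t_{0},\cdot;s)$, and the comparison principle (Proposition \ref{lem-comparison}) gives $u(t,x;s)\le v(t,x)=e^{-\la_{\ka}(x-A-\tilde{c}_{\ka}(t-t_{0}))}$ for all $t\ge t_{0}$, $x\in\R$.

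This last inequality says that $A+\tilde{c}_{\ka}(t-t_{0})$ is admissible in \eqref{new-interface} at time $t$, hence $Y_{\ka}(t;s)\le Y_{\ka}(t_{0};s)+\tilde{c}_{\ka}(t-t_{0})$, which finishes the proof. There is no serious obstacle here: the argument is a short self-contained comparison computation, and the only points needing a touch of care are verifying that the infimum defining $Y_{\ka}$ is attained (so that the comparison is correctly initialized at $t_{0}$) and that $\ka_{0}$ is finite and positive, so that $v$ makes sense and the inequality $\ka_{0}v\ge f(t,v)$ holds on all of $[0,\infty)$.
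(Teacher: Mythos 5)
Your proof is correct and follows essentially the same route as the paper's: you construct the translating exponential supersolution $v(t,x)=e^{-\la_{\ka}(x-Y_{\ka}(t_{0};s)-\tilde{c}_{\ka}(t-t_{0}))}$, verify $v_{t}-[J\ast v-v]=\ka_{0}v\geq f(t,v)$, and invoke the comparison principle to conclude. The only cosmetic difference is that you justify $\tilde{c}_{\ka}>0$ directly via Jensen's inequality (and verify attainment of the infimum in the definition of $Y_{\ka}$), whereas the paper simply observes $\tilde{c}_{\ka}\geq c_{\ka}>0$; both are fine.
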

\begin{proof}
For small $\ka>0$, we have $\tilde{c}_{\ka}\geq c_{\ka}>0$. For $s<0$, $t\geq t_{0}\geq s$, define
\begin{equation*}
v(t,x;t_{0})=e^{-\la_{\ka}(x-Y_{\ka}(t_{0};s)-\tilde{c}_{\ka}(t-t_{0}))},\quad x\in\R.
\end{equation*}
By the definition of $\tilde{c}_{\ka}$, we readily check that $v_{t}=J\ast v-v+\ka_{0}v$. By the definition of $\ka_{0}$, we have $\ka_{0}v\geq f_{\max}(v)$ for all $v\geq0$. It then follows from $v(t_{0},x;t_{0})=e^{-\la_{\ka}(x-Y_{\ka}(t_{0};s))}\geq u(t_{0},x;s)$ by \eqref{new-interface} and the comparison principle that $v(t,x;t_{0})\geq u(t,x;s)$ for $t\geq t_{0}$, which leads to the result.
\end{proof}

We now prove the main result in this subsection. In what follows, we fix some small $\ka>0$ such that $c_{\ka}<c_{B}$, and for this fixed $\ka$, we write $Y_{\ka}(t;s)$ as $Y(t;s)$, and set
\begin{equation}\label{a-critical-value}
\la_{*}:=\min\big\{u>0\big|f_{\max}(u)=\ka u\big\}\in(\theta,1).
\end{equation}

\begin{prop}\label{prop-bd-width-aux}
For any $\la\in(\theta,\la_{*}]$, there is $C(\la)>0$ such that
\begin{equation*}
|X_{\la}(t;s)-Y(t;s)|\leq C(\la)
\end{equation*}
for all $s<0$, $t\geq s$.
\end{prop}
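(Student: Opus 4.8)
The plan is to sandwich the interface $X_{\la}(t;s)$ between $Y(t;s)-C_1$ and $Y(t;s)+C_2$ for suitable constants depending only on $\la$.

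For the upper bound $X_{\la}(t;s) \le Y(t;s) + C_2(\la)$: by the very definition of $Y(t;s)$ in \eqref{new-interface}, we have $u(t,x;s) \le e^{-\la_{\ka}(x-Y(t;s))}$ for all $x$. Evaluating at $x = X_{\la}(t;s)$ gives $\la = u(t,X_{\la}(t;s);s) \le e^{-\la_{\ka}(X_{\la}(t;s)-Y(t;s))}$, hence $X_{\la}(t;s) - Y(t;s) \le \frac{1}{\la_{\ka}}\ln\frac{1}{\la} =: C_2(\la)$, which is independent of $s$ and $t$. This direction is immediate.

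The lower bound $X_{\la}(t;s) \ge Y(t;s) - C_1(\la)$ is the main obstacle. Suppose for contradiction it fails: then there are sequences $s_n < 0$ and $t_n \ge s_n$ with $Y(t_n;s_n) - X_{\la}(t_n;s_n) \to \infty$. The idea is that if the "nose" of the exponential majorant sits far to the right of the $\la$-level set, then just to the left of $Y(t_n;s_n)$ the solution must be close to $1$ (since $e^{-\la_{\ka}(x-Y(t_n;s_n))}$ is large there and $u\le 1$, this alone gives nothing — so instead one argues: near the point where $u(t_n,\cdot;s_n) = e^{-\la_{\ka}(\cdot - Y(t_n;s_n))}$, which is the binding point in the infimum defining $Y$, the solution is exactly pinned to the exponential). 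More precisely, minimality of $Y$ in \eqref{new-interface} forces, for each $n$, either a binding point $x_n$ with $u(t_n,x_n;s_n) = e^{-\la_{\ka}(x_n - Y(t_n;s_n))}$ and matching one-sided slopes, or an asymptotic binding at $x\to+\infty$; by the exponential decay estimate \eqref{aprior-estimate-two-side1} the latter is controlled since $\la_{\ka} < c_{\min}^+$ can be arranged strictly, so the binding is at a finite point $x_n$ with $x_n - Y(t_n;s_n)$ bounded below. At such a binding point $u(t_n,x_n;s_n) = e^{-\la_{\ka}(x_n-Y(t_n;s_n))}$ is bounded above by $e^{\la_{\ka} C_2(\la)}\cdot\la \cdot(\text{something})$ — this needs care. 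The cleaner route is to combine the rightward propagation estimate of Proposition~\ref{prop-rightward-propagation} with the propagation control on $Y$ from Lemma~\ref{lem-new-interface-bd}: since $c_{\ka} < c_B$ (our choice of $\ka$), $Y$ moves rightward at speed at most $\tilde c_\ka$ which can be taken close to $c_\ka < c_B$, while $X_{\la}$ (for $\la \in (\theta,1)$) moves rightward at speed essentially $c_B$ up to a fixed delay $t_{\ep,\la}$. So over a long time interval $X_{\la}$ catches up with $Y$, preventing $Y - X_{\la}$ from being unbounded — but one must rule out the width being created at the initial time or over short times.

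I would therefore organize the lower bound as follows. First, establish a \emph{uniform-in-time} lower bound for $u$ slightly to the left of $Y(t;s)$: concretely, show there is $\de = \de(\la) > 0$ and $M = M(\la)>0$ such that $u(t, Y(t;s) - M; s) \ge \la$ for all $s<0$, $t \ge s$; this is exactly $X_{\la}(t;s) \ge Y(t;s) - M$. To get it, fix $t$ and let $x^* = x^*(t;s)$ be the binding point for $Y(t;s)$ (handle the no-finite-binding case via \eqref{aprior-estimate-two-side1} with strict inequality $\la_\ka < c_{\min}^+$, arranged by shrinking $\ka$). At $x^*$ we have $u(t,x^*;s) = e^{-\la_\ka(x^* - Y(t;s))}$. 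Now run the equation a short fixed time $\tau$ backward/forward from $t$ and use that $f \ge f_{\min} \ge f_B$ together with the bistable stability Lemma~\ref{lem-app-uniform-stable-bi}: since $u(t-\tau, \cdot; s)$ already exceeds a fixed front-like profile (it exceeds $\phi_{\min}(\cdot - y_s - c_{\min}(t-\tau-s))$ by \eqref{aprior-estimate-two-side1}, which is $\ge \la_*>\la$ far to the left and $\le \theta/2$ far to the right), Lemma~\ref{lem-app-uniform-stable-bi} gives $u(t,\cdot;s) \ge \phi_B(\cdot - x_B - c_B\tau) - q_B e^{-\om_B \tau}$ with $x_B, q_B$ depending only on $\la$ (through the fixed comparison profile), so $u(t, z; s) \ge \la$ whenever $z \le x_B + c_B\tau + \xi_B(\tfrac{1+\la}{2})$. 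It remains to check $x_B + c_B\tau + \xi_B(\tfrac{1+\la}{2}) \ge Y(t;s) - M$ for appropriate $M$, i.e.\ to bound $Y(t;s)$ above by $x_B + O(1)$; this follows from $Y(t;s) \le X_{\la_*}(t;s) + C_2(\la_*)$ (the easy upper bound applied at level $\la_*$) combined with $X_{\la_*}(t;s) \le (\text{location of the binding point of } \phi_B) + O(1)$, which is where the choice $c_\ka < c_B$ enters to guarantee the two descriptions of the front are mutually consistent. Assembling these pieces yields $M(\la)$, and together with the upper bound $C_2(\la)$ we set $C(\la) = \max\{C_2(\la), M(\la)\}$, completing the proof.
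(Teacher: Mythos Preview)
Your upper bound $X_{\la}(t;s)\le Y(t;s)+C_2(\la)$ is correct and matches the paper. The lower bound, however, has a genuine gap.

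First, your assertion that ``$\tilde c_\ka$ \dots\ can be taken close to $c_\ka$'' is false. Recall $c_\ka=\frac{1}{\la_\ka}(\int J e^{\la_\ka y}dy-1+\ka)$ while $\tilde c_\ka=\frac{1}{\la_\ka}(\int J e^{\la_\ka y}dy-1+\ka_0)$ with $\ka_0=\sup_{u\in(0,1)}f_{\max}(u)/u$ fixed. As $\ka\to 0$ one has $\la_\ka\to 0$ (Lemma~\ref{lem-aux-minimal-speed}), so $\tilde c_\ka-c_\ka=(\ka_0-\ka)/\la_\ka\to\infty$. Thus Lemma~\ref{lem-new-interface-bd} alone cannot be played off against Proposition~\ref{prop-rightward-propagation}; $\tilde c_\ka$ may be far larger than $c_B$.

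The paper supplies the missing mechanism: whenever $Y$ is ahead of $X_\la$, a \emph{refined} speed bound $Y(t;s)\le Y(t_0;s)+c_\ka(t-t_0)$ holds, not merely the coarse bound with $\tilde c_\ka$. The point is that for $x>X_\la(t;s)$ one has $u(t,x;s)\le\la\le\la_*$, hence $f(t,u)\le\ka u$ by the very definition of $\la_*$ in \eqref{a-critical-value}; so $v(t,x)=e^{-\la_\ka(x-Y(t_0;s)-c_\ka(t-t_0))}$, which solves $v_t=J*v-v+\ka v$, is a supersolution in the relevant region as long as $X_\la$ stays to its left. A last-time/first-time argument then shows $Y-X_\la$ cannot exceed $C_0+c_B t_{\ep,\la}$, since on any interval where it does $Y$ moves at speed $\le c_\ka$ and $X_\la$ at speed $\ge c_B-\ep>c_\ka$, closing the gap. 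You never invoke the condition $f(t,u)\le\ka u$ for $u\le\la_*$, which is precisely what makes $\la_*$ special and yields the speed $c_\ka$ rather than $\tilde c_\ka$.

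Your alternative ``direct'' route is circular. You want to show $u(t,Y(t;s)-M;s)\ge\la$, and to locate the bistable subsolution you invoke $X_{\la_*}(t;s)\le(\text{bistable front position})+O(1)$; but bounding $X_{\la_*}$ relative to the bistable front position requires knowing where you placed the comparison profile at time $t-\tau$, which you anchored only via $\phi_{\min}(\cdot-y_s-c_{\min}(t-\tau-s))$. That gives a location in terms of $y_s$ and $t-\tau-s$, not in terms of $Y(t;s)$, so the chain never closes without already knowing $Y-X_{\la_*}$ is bounded. The binding-point discussion likewise does not produce a usable inequality.
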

\begin{proof}
From the definition of $\la_{*}$, we readily see that
\begin{equation}\label{a-condition-bounded-interface}
f(t,u)\leq f_{\max}(u)\leq \ka u,\quad u\in[0,\la_{*}].
\end{equation}
Fix an $\la\in(\theta,\la_{*}]$. Set $C_{0}=\max\{Y(s;s)-X_{\la}(s;s),1\}$. We see that $C_{0}$ is independent of $s<0$. This is because that $u(s,\cdot;s)=\phi_{\min}(\cdot-y_{s})$, and hence, space translations do not change $Y(s;s)-X_{\la}(s;s)$. Clearly, we have the estimate $\sup_{s<0,t\geq s}[X_{\la}(t;s)-Y(t;s)]\leq C$ for some large $C>0$.

Set $\ep=\frac{c_{B}-c_{\ka}}{2}$ and $C_{1}=C_{0}+c_{B}t_{\ep,\la}$, where $t_{\ep,\la}$ is as in Proposition \ref{prop-rightward-propagation}. To finish the proof, we only need to show $\sup_{s<0,t\geq s}[Y(t;s)-X_{\la}(t;s)]\leq C_{1}$.
 Suppose this is not the case, then we can find some $s_{1}<0$ and $t_{1}\geq s_{1}$ such that $Y(t_{1};s_{1})-X_{\la}(t_{1};s_{1})>C_1$. Since $Y(s_{1};s_{1})-X_{\la}(s_{1};s_{1})\leq C_{0}<C_{1}$, there holds $t_{1}>s_{1}$. Let
\begin{equation*}
t_{0}=\sup\big\{t\in[s_{1},t_{1}]\big|Y(t;s_{1})-X_{\la}(t;s_{1})\leq C_{0}\big\}.
\end{equation*}

We claim $Y(t_{0};s_{1})-X_{\la}(t_{0};s_{1})\leq C_{0}$. It is trivial if there are only finitely many $t\in[s_{1},t_{1}]$ such that $Y(t;s_{1})-X_{\la}(t;s_{1})\leq C_{0}$. So we assume there are infinitely many such $t$ and the claim is false. Then, there exists a sequence $\{\tilde{t}_{n}\}_{n\in\N}\subset[s_{1},t_{0})$ such that $Y(\tilde{t}_{n};s_{1})-X_{\la}(\tilde{t}_{n};s_{1})\leq C_{0}$ for $n\in\N$ and $\tilde{t}_{n}\ra t_{0}$ as $n\ra\infty$. Moreover, $Y(t_{0};s_{1})-X_{\la}(t_{0};s_{1})=\tilde C_{1}>C_{0}$. It then follows that for all $n\in\N$
\begin{equation*}
Y(\tilde{t}_{n};s_{1})-X_{\la}(\tilde{t}_{n};s_{1})\leq C_{0}=C_{0}-\tilde C_{1}+Y(t_{0};s_{1})-X_{\la}(t_{0};s_{1}),
\end{equation*}
that is,
\begin{equation*}
\tilde C_{1}-C_{0}+X_{\la}(t_{0};s_{1})-X_{\la}(\tilde{t}_{n};s_{1})\leq Y(t_{0};s_{1})-Y(\tilde{t}_{n};s_{1})\leq\tilde{c}_{\ka}(t_{0}-\tilde{t}_{n}),
\end{equation*}
where the second inequality is due to Lemma \ref{lem-new-interface-bd}. Passing $n\to\infty$, we easily deduce a contradiction from the continuity of $X_{\la}(t;s_{1})$. Hence, the claim is true, that is, $Y(t_{0};s_{1})-X_{\la}(t_{0};s_{1})\leq C_{0}$. It follows that $t_{0}<t_{1}$.

We show
\begin{equation}\label{equality-technical}
Y(t_{0};s_{1})-X_{\la}(t_{0};s_{1})=C_{0}.
\end{equation}
Suppose \eqref{equality-technical} is not true, then we can find some $\de_{0}>0$ such that $Y(t_{0};s_{1})-X_{\la}(t_{0};s_{1})=C_{0}-\de_{0}$. Since $Y(t;s_{1})-X_{\la}(t;s_{1})>C_{0}$ for $t\in(t_{0},t_{1}]$ by the definition of $t_{0}$, we deduce from Lemma \ref{lem-new-interface-bd} that for $t\in(t_{0},t_{1}]$
\begin{equation*}
\begin{split}
C_{0}<Y(t;s_{1})-X_{\la}(t;s_{1})&\leq Y(t_{0};s_{1})+\tilde{c}_{\ka}(t-t_{0})-X_{\la}(t_{0};s_{1})+X_{\la}(t_{0};s_{1})-X_{\la}(t;s_{1})\\
&=C_{0}-\de_{0}+\tilde{c}_{\ka}(t-t_{0})+X_{\la}(t_{0};s_{1})-X_{\la}(t;s_{1}).
\end{split}
\end{equation*}
This leads to a contradiction when $t$ approaches $t_{0}$ due to the continuity of $X_{\la}(t;s_{1})$ in $t$. Hence, \eqref{equality-technical} holds.

Next, we look at the time interval $[t_{0},t_{1}]$ and set $\tilde{Y}(t;s_{1})=Y(t_{0};s_{1})+c_{\ka}(t-t_{0})$ for $t\in[t_{0},t_{1}]$. Note both $X_{\la}(t;s_{1})$ and $\tilde{Y}(t;s_{1})$ are continuous, and $X_{\la}(t_{0};s_{1})<\tilde{Y}(t_{0};s_{1})$ by \eqref{equality-technical}. We claim that $X_{\la}(t;s_{1})<\tilde{Y}(t;s_{1})$ for all $t\in[t_{0},t_{1}]$. Suppose this is not the case and let
\begin{equation*}
t_{2}=\min\big\{t\in[t_{0},t_{1}]\big|X_{\la}(t;s_{1})=\tilde{Y}(t;s_{1})\big\}.
\end{equation*}
Clearly, $t_{2}\in(t_{0},t_{1}]$. We define
\begin{equation*}
v(t,x;t_{0})=e^{-\la_{\ka}(x-\tilde{Y}(t;s_{1}))},\quad x\in\R,\,\, t\in[t_{0},t_{2}].
\end{equation*}
We easily check $v_{t}=J\ast v-v+\ka v$. Moreover, we see
\begin{itemize}
\item at the initial moment $t_{0}$, we have $u(t_{0},x;s_{1})\leq e^{-\la_{\ka}(x-Y(t_{0};s_{1}))}=v(t_{0},x;t_{0})$ for $x\in\R$,

\item for $x\leq \tilde{Y}(t;s_{1})$ and $t\in(t_{0},t_{2})$, we have $u(t,x;s_{1})<1\leq v(t,x;t_{0})$,

\item for $x>\tilde{Y}(t;s_{1})$ and $t\in(t_{0},t_{2})$, we have $x>X_{\la}(t;s_{1})$, and hence $u(t,x;s_{1})\leq\la$. As a result, we have
$u_{t}=J\ast u-u+f(t,u)\leq J\ast u-u+\ka u$ by \eqref{a-condition-bounded-interface}.
\end{itemize}
Note, by Lemma \ref{lem-approximating-sol-prop} and the definition of $v(t,x;t_{0})$, the limit $v(t,x;t_{0})-u(t,x;s_{1})\to0$ as $x\to\infty$ is uniform in $t\in[t_{0},t_{2}]$. Then, applying the comparison principle (see Proposition \ref{lem-comparison}) to $v(t,x;t_{0})-u(t,x;s_{1})$, we conclude
\begin{equation*}
u(t,x;s_{1})\leq v(t,x;t_{0})=e^{-\la_{\ka}(x-\tilde{Y}(t;s_{1}))},\quad x\in\R,\,\, t\in[t_{0},t_{2}].
\end{equation*}
It follows that $Y(t;s_{1})\leq\tilde{Y}(t;s_{1})$ for $t\in[t_{0},t_{2}]$ by definition in \eqref{new-interface}. In particular, $Y(t_{2};s_{1})\leq\tilde{Y}(t_{2};s_{1})=X_{\la}(t_{2};s_{1})$. Since $t_{2}\in(t_{0},t_{1}]$, we have $Y(t_{2};s_{1})-X_{\la}(t_{2};s_{1})>C_{0}$ by the definition of $t_{0}$. It is a contradiction. Thus, the claim follows, that is, $X_{\la}(t;s_{1})<\tilde{Y}(t;s_{1})$ for all $t\in[t_{0},t_{1}]$, and repeating the above arguments, we see
\begin{equation}\label{propagation-estimate-refined}
Y(t;s_{1})\leq\tilde{Y}(t;s_{1})=Y(t_{0};s_{1})+c_{\ka}(t-t_{0}),\quad t\in[t_{0},t_{1}].
\end{equation}
It follows from \eqref{propagation-estimate-refined} and  Proposition \ref{prop-rightward-propagation} that for any $t\in[t_{0},t_{1}]$
\begin{equation*}
\begin{split}
Y(t;s_{1})-X_{\la}(t;s_{1})&\leq Y(t_{0};s_{1})+c_{\ka}(t-t_{0})-[X_{\la}(t_{0};s_{1})+(c_{B}-\ep)(t-t_{0}-t_{\ep,\la})]\\
&= C_{0}+(c_{B}-\ep)t_{\ep,\la}-(c_{B}-c_{\ka}-\ep)(t-t_{0})\\
&\leq C_{0}+c_{B}t_{\ep,\la}=C_1.
\end{split}
\end{equation*}
This is a contradiction. Consequently, $Y(t;s)-X_{\la}(t;s)\leq C_{1}$ for all $s<0$, $t\geq s$. This completes the proof.
\end{proof}

We remark that Proposition \ref{prop-bd-width-aux} is a nonlocal version of \cite[Lemma 3.4]{ShSh14}, whose proof is based on the rightward propagation estimate as in Proposition \ref{prop-rightward-propagation} and an idea of Zlato\v{s} (see \cite[Lemma 2.5]{Zl13}).


\subsection{Proof of Theorem \ref{thm-bounded-interface-width}}\label{subsec-proof}

Let $\la_{*}$ be as in \eqref{a-critical-value} and fix any $0<\la_{1}<\la_{2}\leq\la_{*}$. Consider the function
\begin{equation*}
g(t,x;s)=e^{-\la_{\ka}(x-Y(t;s))},\quad x\in\R.
\end{equation*}
Since $g(t,Y(t;s);s)=1$, there exists a unique $x_{1}>0$ (independent of $s<0$, $t\geq s$) such that $g(t,Y(t;s)+x_{1};s)=\la_{1}$. Since $g(t,x;s)=e^{-\la_{\ka}(x-Y(t;s))}\geq u(t,x;s)$ for $x\geq Y(t;s)$, and trivially, $g(t,x;s)>1>u(t,x;s)$ for $x<Y(t;s)$, we have $Y(t;s)+x_{1}\geq X_{\la_{1}}(t;s)$. It then follows that
\begin{equation*}
X_{\la_{1}}(t;s)-X_{\la_{2}}(t;s)\leq Y(t;s)-X_{\la_{2}}(t;s)+x_{1}.
\end{equation*}
The result then follows from Proposition \ref{prop-bd-width-aux}.


\section{Modified interface locations and decaying estimates}\label{sec-modified-decaying}

In the study of the propagation of the solution $u(t,x;s)$, the propagation of the interface location $X_{\la}(t;s)$, more precisely, how fast it moves, plays a crucial role. In the classical random dispersal case, this problem is transferred into the study of uniform steepness, that is, whether $u_{x}(t,X_{\la}(t;s);s)$ is uniformly negative, since there holds the formula
\begin{equation*}
\dot{X}_{\la}(t;s)=-\frac{u_{t}(t,X_{\la}(t;s);s)}{u_{x}(t,X_{\la}(t;s);s)}.
\end{equation*}
Clearly, this approach does not work here since we are lack of space regularity of $u(t,x;s)$. Moreover, we do not know if $X_{\la}(t;s)$ is differentiable in $t$ and it moves back and forth in general. To circumvent these difficulties, we look at the problem from a different viewpoint. Instead of studying $X_{\la}(t;s)$ directly, we modify it to get a new interface location of expected properties, such as moving in one direction with certain speed and staying within a certain distance from $X_{\la}(t;s)$, which captures the propagation nature of $u(t,x;s)$. This is the purpose of Subsection \ref{subsec-modified-interface-location}. As an application of the new interface location, we derive uniform exponential decaying estimates of $u(t,x;s)$ in Subsection \ref{subsec-decaying-estimate}.

\subsection{Modified interface locations}\label{subsec-modified-interface-location}

In this subsection, we modify $X_{\la}(t;s)$ properly and prove the following theorem.

\begin{thm}\label{thm-interface-rightward-propagation-improved}
Let $\la_{*}$ be as in \eqref{a-critical-value}. There are $c_{\max}>0$, $\tilde{c}_{\max}>0$ and $d_{\max}>0$ such that for any $s<0$, there exists a continuously differentiable function $X(t;s):[s,\infty)\ra\R$ satisfying
\begin{equation*}
\begin{split}
\frac{c_{B}}{2}\leq\dot{X}(t;s)\leq c_{\max},&\quad t\geq s,\\
|\ddot{X}(t;s)|\leq\tilde{c}_{\max},&\quad t\geq s
\end{split}
\end{equation*}
such that
\begin{equation*}
0\leq X(t;s)-X_{\la_{*}}(t;s)\leq d_{\max},\quad t\geq s.
\end{equation*}
Moreover, $\{\dot{X}(\cdot,s)\}_{s<0}$ and $\{\ddot{X}(\cdot,s)\}_{s<0}$ are uniformly bounded and uniformly Lipschitz continuous.

In particular, for any $\la\in(0,\la_{*}]$, there exists $d_{\max}(\la)>0$ such that
\begin{equation*}
|X(t;s)-X_{\la}(t;s)|\leq d_{\max}(\la)\quad\text{if}\,\,\la\in(0,\la_{*}]
\end{equation*}
for all $s<0$, $t\geq s$.
\end{thm}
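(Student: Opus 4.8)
The plan is to build $X(t;s)$ by smoothing and, if necessary, pushing up a rough interface so that it advances at a controlled positive rate. Concretely, I would start from the function $Y(t;s)=Y_{\ka}(t;s)$ from Section \ref{sec-bound-interface-width}, which by Proposition \ref{prop-bd-width-aux} stays within a fixed distance $C(\la_{*})$ of $X_{\la_{*}}(t;s)$. The function $Y(t;s)$ already has a built-in one-sided propagation property: Lemma \ref{lem-new-interface-bd} gives the upper bound $Y(t;s)-Y(t_{0};s)\leq \tilde c_{\ka}(t-t_{0})$, and the argument in the proof of Proposition \ref{prop-bd-width-aux} (see \eqref{propagation-estimate-refined}) shows that whenever $Y$ gets too far ahead of $X_{\la_{*}}$ it cannot have been created by fast motion, which combined with Proposition \ref{prop-rightward-propagation} yields a \emph{lower} bound on the long-run advance of $X_{\la_{*}}$, hence of $Y$. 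So the raw material is a function oscillating within a bounded tube around a curve that advances with average speed bounded below by roughly $c_B$ and above by $\tilde c_\ka$.

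The next step is to manufacture a genuinely monotone, uniformly speeded interface out of this. I would define an auxiliary function such as
\begin{equation*}
Z(t;s)=\sup_{s\leq\tau\leq t}\big[Y(\tau;s)+\tfrac{c_{B}}{2}(t-\tau)\big],
\end{equation*}
which by construction satisfies $Z(t;s)\geq Y(t;s)$, is nondecreasing, and — using the Lipschitz-in-time bound from Lemma \ref{lem-new-interface-bd} — has difference quotients trapped between $\tfrac{c_B}{2}$ and $\max\{\tfrac{c_B}{2},\tilde c_\ka\}$, so $Z$ is Lipschitz with the right two-sided slope bounds. One must check that $Z$ does not run away from $X_{\la_*}$: this is exactly where the rightward propagation estimate of Proposition \ref{prop-rightward-propagation} is used, to say that the supremum is attained at a $\tau$ with $t-\tau$ bounded, so $Z(t;s)-Y(t;s)$ stays bounded and hence $0\leq Z(t;s)-X_{\la_*}(t;s)\leq d_{\max}$ for a uniform $d_{\max}$. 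Finally, to upgrade Lipschitz regularity to $C^1$ with bounded second derivative, I would mollify in time: set $X(t;s)=(\rho_{\delta}\ast Z(\cdot;s))(t)$ for a fixed smooth kernel $\rho_\delta$, extending $Z(\cdot;s)$ to the left of $s$ by its slope-$\tfrac{c_B}{2}$ continuation. Convolution preserves the slope bounds $\tfrac{c_B}{2}\leq\dot X\leq c_{\max}$, produces $|\ddot X|\leq\tilde c_{\max}$ with $\tilde c_{\max}\sim (c_{\max}-\tfrac{c_B}{2})\|\rho_\delta'\|_{L^1}$, and shifts $X$ from $Z$ by at most $c_{\max}\delta$, which is absorbed into $d_{\max}$. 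Uniformity in $s$ of all these bounds is automatic because every constant ($c_B,\tilde c_\ka,c_{\max},C(\la_*),t_{\ep,\la_*}$) is $s$-independent, and uniform Lipschitz continuity of $\{\dot X(\cdot;s)\}$ and $\{\ddot X(\cdot;s)\}$ follows from the bound on $\ddot X$ and on the third derivative produced by one more derivative on the mollifier.

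The last assertion, that $|X(t;s)-X_{\la}(t;s)|\leq d_{\max}(\la)$ for every $\la\in(0,\la_*]$, I would get by combining the already-established bound $|X(t;s)-X_{\la_*}(t;s)|\leq d_{\max}$ with control on $X_{\la}-X_{\la_*}$. For $\la\in(\theta,\la_*]$ this is immediate from Proposition \ref{prop-bd-width-aux} (both are within $C(\cdot)$ of $Y$). For $\la\in(0,\theta]$ one needs $\sup_{s<0,t\geq s}[X_{\la}(t;s)-X_{\la_*}(t;s)]<\infty$, which is precisely Theorem \ref{thm-bounded-interface-width} applied with $\la_1=\la$, $\la_2=\la_*$, together with the trivial inequality $X_\la\geq X_{\la_*}$ from space monotonicity; so $d_{\max}(\la)=d_{\max}+\sup[X_\la-X_{\la_*}]$ works. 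I expect the main obstacle to be the bookkeeping in the second paragraph: verifying that the running-supremum construction simultaneously (a) keeps the lower slope bound $\dot X\geq c_B/2$ \emph{honestly} everywhere — which forces the extension of $Z$ to the left of $s$ and a small initial-layer argument near $t=s$ using that $Y(s;s)-X_{\la_*}(s;s)$ is $s$-independent — and (b) does not lose the upper bound $Z(t;s)-X_{\la_*}(t;s)\leq d_{\max}$, for which the quantitative form of Proposition \ref{prop-rightward-propagation} (linear-in-time lower bound with a fixed additive defect $t_{\ep,\la}$) is exactly what is needed. Everything after that is routine mollification estimates.
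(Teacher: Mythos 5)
Your construction is correct but genuinely different from the paper's. The paper builds the modified interface explicitly: starting from $X_{\la_*}(s;s)+2C_0+1$, it lets a straight line of slope $c_B/2$ run until $X_{\la_*}(\cdot;s)$ catches up, records the hitting time $T_1(s)$, jumps up by $2C_0+1$, repeats to obtain a sequence of hitting times $T_n(s)$ with gaps uniformly in $\big[\tfrac{2}{2c_0-c_B},\tfrac{4(2C_0+1)}{c_B}+3t_B\big]$, and finally replaces the piecewise-linear function by a $C^2$ one by splicing in a fixed interpolating bump $\de(\cdot)$ near each $T_n(s)$. You instead take the one-sided running envelope $Z(t;s)=\sup_{s\le\tau\le t}[Y(\tau;s)+\tfrac{c_B}{2}(t-\tau)]$ and then mollify. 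Both routes use exactly the same three ingredients — Proposition \ref{prop-rightward-propagation} (quantified rightward motion), Proposition \ref{prop-bd-width-aux} ($Y$ stays near $X_{\la_*}$), and Lemma \ref{lem-new-interface-bd} ($Y$ is one-sidedly Lipschitz) — and your envelope-plus-mollification version is arguably slicker because the slope bounds $\tfrac{c_B}{2}\le\dot X\le\max\{\tilde c_\ka,\tfrac{c_B}{2}\}$ and the $C^\infty$ regularity come out of general principles rather than case-by-case bookkeeping. Two small points to tighten: (1) since Proposition \ref{prop-bd-width-aux} gives only $|Y-X_{\la_*}|\le C_0$, your $Z$ can dip as much as $C_0$ below $X_{\la_*}$; you should add a constant (say $C_0+c_{\max}\delta$, absorbing also the mollification shift) to $Z$ so that the stated conclusion $0\le X-X_{\la_*}$ holds exactly. (2) Note that the one-sided Lipschitz bound on $Y$ and the linear-in-time lower bound on $X_{\la_*}$ suffice for every estimate you need on $Z$; in particular you do not need to know that $Y$ is continuous or that the supremum defining $Z$ is attained, which is worth saying since the paper explicitly declines to establish continuity of $Y_{\ka}(t;s)$.
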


\begin{proof}
By Proposition \ref{prop-rightward-propagation}, there exists $t_{B}>0$ such that
\begin{equation}\label{modify-propagation-estimate-1}
X_{\la_{*}}(t;s)-X_{\la_{*}}(t_{0};s)\geq\frac{3}{4}c_{B}(t-t_{0}-t_{B}), \quad s<0,\,\,t\geq t_{0}\geq s.
\end{equation}
Recall $Y(t;s)$ is $Y_{\ka}(t;s)$ for some fixed small $\ka>0$ and we have
\begin{equation}\label{modify-bounded-width}
C_{0}:=\sup_{s<0,t\geq s}|X_{\la_{*}}(t;s)-Y(t;s)|<\infty
\end{equation}
by Proposition \ref{prop-bd-width-aux}, and
\begin{equation}\label{modify-propagation-estimate-2}
Y(t;s)-Y(t_{0};s)\leq c_{0}(t-t_{0}), \quad s<0,\,\,t\geq t_{0}\geq s
\end{equation}
by Lemma \ref{lem-new-interface-bd}, where $c_{0}=\tilde{c}_{\ka}$ for the fixed $\ka>0$. We interpret that \eqref{modify-propagation-estimate-1}, \eqref{modify-bounded-width} and \eqref{modify-propagation-estimate-2} imply that $X_{\la_{*}}(t;s)$ moves with a uniformly positive and uniformly bounded average speed. This observation is crucial in the following modification.

We modify $X_{\la_{*}}(t;s)$ as follows. At the initial moment $s$, we define
\begin{equation*}
Z_{0}(t;s)=X_{\la_{*}}(s;s)+2C_{0}+1+\frac{c_{B}}{2}(t-s),\quad t\geq s.
\end{equation*}
Clearly, $X_{\la_{*}}(s;s)<Z_{0}(s;s)$. By \eqref{modify-propagation-estimate-1}, $X_{\la_{*}}(t;s)$ will hit $Z_{0}(t;s)$ sometime after $s$. Let $T_1(s)$ be the first time that $X_{\la_{*}}(t;s)$ hits $Z_{0}(t;s)$, that is, $T_1(s)=\min\big\{t\geq s\big|X_{\la_{*}}(t;s)=Z_{0}(t;s)\big\}$. It follows that
\begin{equation*}
X_{\la_{*}}(t;s)<Z_{0}(t;s)\,\,\text{for}\,\,t\in[s,T_1(s))\quad\text{and}\quad X_{\la_{*}}(T_1(s);s)=Z_{0}(T_1(s);s).
\end{equation*}
Moreover, $T_1(s)-s\in\big[\frac{2}{2c_{0}-c_{B}},\frac{4(2C_{0}+1)}{c_{B}}+3t_{B}\big]$, which is a simple result of \eqref{modify-propagation-estimate-1}, \eqref{modify-bounded-width} and \eqref{modify-propagation-estimate-2}.

Now, at the moment $T_1(s)$, we define
\begin{equation*}
Z_{1}(t;T_1(s))=X_{\la_{*}}(T_1(s);s)+2C_{0}+1+\frac{c_{B}}{2}(t-T_1(s)),\quad t\geq T_1(s).
\end{equation*}
Similarly, $X_{\la_{*}}(T_1(s);s)<Z_{1}(T_1(s);T_1(s))$ and $X_{\la_{*}}(t;s)$ will hit $Z_{1}(t;T_1(s))$ sometime after $T_1(s)$. Denote by $T_{2}(s)$ the first time that $X_{\la_{*}}(t;s)$ hits $Z_{1}(t;T_1(s))$. Then,
\begin{equation*}
X_{\la_{*}}(t;s)<Z_{1}(t;T_1(s))\,\,\text{for}\,\,t\in[T_1(s),T_{2}(s))\quad\text{and}\quad X_{\la_{*}}(T_{2}(s);s)=Z_{1}(T_{2}(s);T_1(s)),
\end{equation*}
and $T_{2}(s)-T_1(s)\in\big[\frac{2}{2c_{0}-c_{B}},\frac{4(2C_{0}+1)}{c_{B}}+3t_{B}\big]$ by \eqref{modify-propagation-estimate-1}, \eqref{modify-bounded-width} and \eqref{modify-propagation-estimate-2}.

Repeating the above arguments, we obtain the following: there is a sequence of times $\{T_{n-1}(s)\}_{n\in\N}$ satisfying $T_{0}(s)=s$ and
\begin{equation*}
T_{n}(s)-T_{n-1}(s)\in\bigg[\frac{2}{2c_{0}-c_{B}},\frac{4(2C_{0}+1)}{c_{B}}+3t_{B}\bigg]\quad \text{for all}\,\,n\in\N,
\end{equation*}
and  for any $n\in\N$
\begin{equation*}
\begin{split}
X_{\la_{*}}(t;s)<Z_{n-1}(t;T_{n-1}(s))\,\,\text{for}\,\,t\in[T_{n-1}(s),T_{n}(s))\quad\text{and}\quad X_{\la_{*}}(T_{n}(s);s)=Z_{n-1}(T_{n}(s);T_{n-1}(s)),
\end{split}
\end{equation*}
where
\begin{equation*}
Z_{n-1}(t;T_{n-1}(s))=X_{\la_{*}}(T_{n-1}(s);s)+2C_{0}+1+\frac{c_{B}}{2}(t-T_{n-1}(s)).
\end{equation*}
Moreover, for any $n\in\N$ and $t\in[T_{n-1}(s),T_{n}(s))$
\begin{equation*}
\begin{split}
&Z_{n-1}(t;T_{n-1}(s))-X_{\la_{*}}(t;s)\\
&\quad\quad\leq X_{\la_{*}}(T_{n-1}(s);s)+2C_{0}+1+\frac{c_{B}}{2}(t-T_{n-1}(s))\\
&\quad\quad\quad-\bigg[X_{\la_{*}}(T_{n-1}(s);s)+\frac{3}{4}c_{B}(t-T_{n-1}(s)-t_{B})\bigg]\\
&\quad\quad=2C_{0}+1+\frac{3}{4}c_{B}t_{B}-\frac{1}{4}c_{B}(t-T_{n-1}(s))\leq 2C_{0}+1+\frac{3}{4}c_{B}t_{B}.
\end{split}
\end{equation*}

Now, define $\tilde{Z}(t;s):[s,\infty)\ra\R$ by setting
\begin{equation}\label{definition-new-fun}
\tilde{Z}(t;s)=Z_{n-1}(t;T_{n-1}(s))\quad\text{for}\,\, t\in[T_{n-1}(s),T_{n}(s)),\,\,n\in\N.
\end{equation}
Since $[s,\infty)=\cup_{n\in\N}[T_{n-1}(s),T_{n}(s))$, $\tilde{Z}(t;s)$ is well-defined for all $t\geq s$ (see Figure \ref{graph-modified-interface} for the illustration). Notice $\tilde{Z}(t;s)$ is strictly increasing and is linear on $[T_{n-1}(s),T_{n}(s))$ with slope $\frac{c_{B}}{2}$ for each $n\in\N$, and satisfies
\begin{equation*}
0\leq\tilde{Z}(t;s)-X_{\la_{*}}(t;s)\leq2C_{0}+1+\frac{3}{4}c_{B}t_{B},\quad t\geq s.
\end{equation*}

\begin{figure}
 \begin{center}
  \includegraphics{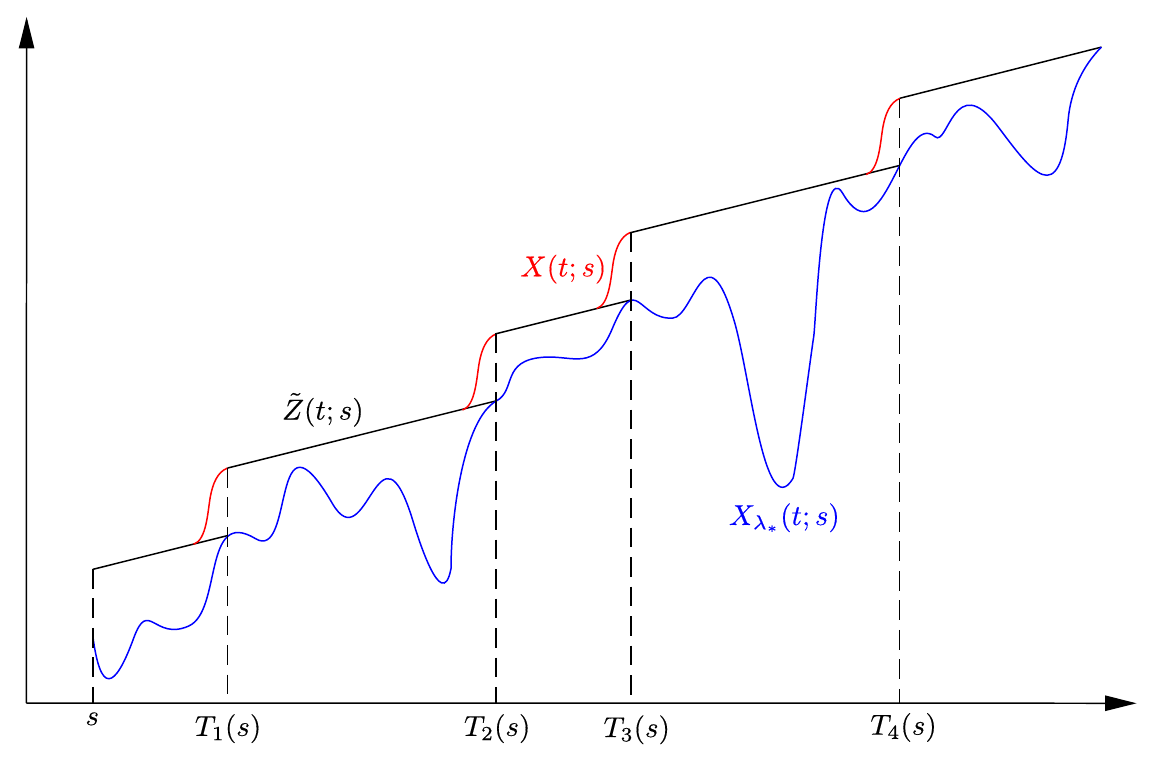}
\end{center}
\caption{\small Modified Interface Location}
  \label{graph-modified-interface}
\end{figure}

Finally, we can modify $\tilde{Z}(t;s)$ near each $T_{n}(s)$ for $n\in\N$ to get the expected modification. In fact, fix some $\de_{*}\in(0,\frac{1}{2}\frac{2}{2c_{0}-c_{B}})$. We modify $\tilde{Z}(t;s)$ by redefining it on the intervals $(T_{n}(s)-\de_{*},T_{n}(s))$, $n\in\N$ as follows: define
\begin{equation*}
X(t;s)=\begin{cases}
\tilde{Z}(t;s),\quad t\in[s,\infty)\bs\cup_{n\in\N}(T_{n}(s)-\de_{*},T_{n}(s)),\\
X_{\la_{*}}(T_{n}(s))+\de(t-T_{n}(s)),\quad t\in(T_{n}(s)-\de_{*},T_{n}(s)),\,\,n\in\N,
\end{cases}
\end{equation*}
where $\de:[-\de_{*},0]\ra[-\frac{c_{B}}{2}\de_{*},2C_{0}+1]$ is twice continuously differentiable and satisfies
\begin{equation*}
\begin{split}
&\de(-\de_{*})=-\frac{c_{B}}{2}\de_{*},\quad\de(0)=2C_{0}+1,\\
&\dot{\de}(-\de_{*})=\frac{c_{B}}{2}=\dot{\de}(0),\quad\dot{\de}(t)\geq\frac{c_{B}}{2}\,\,\text{for}\,\,t\in(-\de_{*},0)\quad\text{and}\\
&\ddot{\de}(-\de_{*})=0=\ddot{\de}(0).
\end{split}
\end{equation*}
The existence of such a function $\de(t)$ is clear. Moreover, there exist $c_{\max}=c_{\max}(\de_{*})>0$ and $\tilde{c}_{\max}=\tilde{c}_{\max}(\de_{*})>0$ such that $\dot{\de}(t)\leq c_{\max}$ and $|\ddot{\de}(t)|\leq\tilde{c}_{\max}$ for $t\in(-\de_{*},0)$. Notice the above modification is independent of $n\in\N$ and $s<0$. As a result, we readily check that $X(t;s)$ satisfies all expected properties. This completes the proof.
\end{proof}


\subsection{Uniform exponential decaying estimates}\label{subsec-decaying-estimate}

In this subsection, we study uniform exponential decaying estimates of $u(t,x;s)$ behind and ahead of interfaces. Let $\la_{*}$ be as in \eqref{a-critical-value} and $X(t;s)$ be as in Theorem \ref{thm-interface-rightward-propagation-improved}. We see that there exist $\theta_{*}\in(\theta,\la_{*}]$ and $\beta>0$ such that
\begin{equation}\label{stability-at-1}
f_{\min}(u)\geq\beta(1-u),\quad u\in[\theta_{*},1].
\end{equation}
We remark that $\beta$ may be very small.
Set
\begin{equation}\label{aux-interface-location-1}
\hat{X}(t;s)=X(t;s)-d_{\max}-\hat{C},
\end{equation}
where $\hat{C}>0$ is some constant (to be chosen) introduced only for certain flexibility. Theorem \ref{thm-bounded-interface-width} and Theorem \ref{thm-interface-rightward-propagation-improved} then imply that $\hat{X}(t;s)\leq X_{\theta_{*}}(t;s)$, and hence, $u(t,x+\hat{X}(t;s);s)\geq\theta_{*}$ for all $x\leq0$. We also set
\begin{equation}\label{aux-interface-location-2}
\tilde{X}(t;s)=X(t;s)+\sup_{s<0,t\geq s}\big|X_{\theta}(t;s)-X_{\la_{*}}(t;s)\big|.
\end{equation}
Since $X(t;s)\geq X_{\la_{*}}(t;s)$ by Theorem \ref{thm-interface-rightward-propagation-improved}, we have $\tilde{X}(t;s)\geq X_{\theta}(t;s)$, and hence, $u(t,x+\tilde{X}(t;s);s)\leq\theta$ for $x\geq0$.

We now prove the main result in this subsection.

\begin{thm}\label{thm-decaying-estimate}
There exist $c_{\pm}>0$ such that
\begin{equation*}
\begin{split}
u(t,x;s)&\geq1-e^{c_{-}(x-\hat{X}(t;s))},\quad x\leq\hat{X}(t;s),\\
u(t,x;s)&\leq e^{-c_{+}(x-\tilde{X}(t;s))},\quad x\geq\tilde{X}(t;s)
\end{split}
\end{equation*}
for all $s<0$, $t\geq s$.
\end{thm}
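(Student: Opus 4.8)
The plan is to prove the two estimates separately, both via the comparison principle with carefully chosen exponential sub/supersolutions, exploiting the stability of the constant states $1$ (behind the interface) and $0$ (ahead of it).

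\textbf{Estimate ahead of the interface.} First I would handle the upper bound $u(t,x;s)\leq e^{-c_{+}(x-\tilde{X}(t;s))}$ for $x\geq\tilde{X}(t;s)$. By the definition of $\tilde{X}(t;s)$ in \eqref{aux-interface-location-2} and the bounded interface width (Theorem \ref{thm-bounded-interface-width}), we have $u(t,x;s)\leq\theta$ for $x\geq\tilde{X}(t;s)$, so on that region the nonlinearity is switched off: $f(t,u)=0$ there since $f(t,u)\leq f_{\max}(u)=0$ for $u\in[0,\theta]$. Wait — one must be careful that $f(t,u)\geq f_{\min}(u)=0$ too, so indeed $f(t,u)=0$ whenever $u\in[0,\theta]$. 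Thus on $x\geq\tilde{X}(t;s)$ the function $u$ is a subsolution of the linear equation $w_{t}=J\ast w-w$. I would then build the supersolution $v(t,x;s)=e^{-c_{+}(x-\tilde{X}(t;s))}$. Using that $\dot{\tilde{X}}(t;s)=\dot{X}(t;s)\geq c_{B}/2>0$ (Theorem \ref{thm-interface-rightward-propagation-improved}), one computes
\begin{equation*}
v_{t}-[J\ast v-v]=\Big[c_{+}\dot{\tilde X}(t;s)+1-\int_{\R}J(z)e^{c_{+}z}dz\Big]v,
\end{equation*}
and since $\int_{\R}J(z)e^{c_{+}z}dz\to1$ as $c_{+}\to0$ by (H1), the bracket is $\geq0$ for all small $c_{+}>0$ (any $c_{+}$ with $\int J(z)e^{c_{+}z}dz-1\leq (c_{B}/2)c_{+}$ works). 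Hence $v$ is a supersolution of the linear equation on the moving half-line $\{x\geq\tilde{X}(t;s)\}$. On the moving boundary $x=\tilde{X}(t;s)$ we have $v=1\geq\theta\geq u$, at the initial time $s$ we have $u(s,\cdot;s)=\phi_{\min}(\cdot-y_{s})$ which decays at least exponentially with rate $c_{\min}^{+}$ (Lemma \ref{lem-app-exponential-decay}), so shrinking $c_{+}\leq c_{\min}^{+}$ and absorbing the shift into the constant $\hat C$-type freedom gives $u(s,x;s)\leq v(s,x;s)$ for $x\geq\tilde X(s;s)$; and the difference $v-u$ tends to $0$ as $x\to\infty$ uniformly in $t$ on compact time intervals by Lemma \ref{lem-approximating-sol-prop}. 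The comparison principle on the moving domain (Proposition \ref{lem-comparison}) then yields the claim.

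\textbf{Estimate behind the interface.} For the lower bound $u\geq 1-e^{c_{-}(x-\hat X(t;s))}$ on $x\leq\hat X(t;s)$, I would work with $w=1-u$. By the choice of $\hat X$ in \eqref{aux-interface-location-1}, $u(t,x+\hat X(t;s);s)\geq\theta_{*}$ for $x\leq0$, so on this region $u\in[\theta_{*},1]$ and the key stability inequality \eqref{stability-at-1}, $f(t,u)\geq f_{\min}(u)\geq\beta(1-u)$, is available. Since $u_t=J\ast u-u+f(t,u)$, the function $w=1-u$ satisfies $w_t=J\ast w-w-f(t,u)\leq J\ast w-w-\beta w$ on $\{x\leq\hat X(t;s)\}$. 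I would then take the supersolution $\psi(t,x;s)=e^{c_{-}(x-\hat X(t;s))}$ and compute
\begin{equation*}
\psi_{t}-[J\ast\psi-\psi]+\beta\psi=\Big[-c_{-}\dot{\hat X}(t;s)+1-\int_{\R}J(z)e^{-c_{-}z}dz+\beta\Big]\psi.
\end{equation*}
Here $\int_{\R}J(z)e^{-c_{-}z}dz\to1$ as $c_{-}\to0$, so the bracket tends to $\beta>0$; but we also need $-c_{-}\dot{\hat X}(t;s)$ not to overwhelm it — since $\dot{\hat X}=\dot X\leq c_{\max}$, choosing $c_{-}>0$ small so that $c_{-}c_{\max}+\big(\int J(z)e^{-c_{-}z}dz-1\big)\leq\beta$ makes the bracket $\geq0$, i.e. $\psi$ is a supersolution of the equation for $w$. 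On the boundary $x=\hat X(t;s)$ we have $\psi=1\geq w$; at time $s$, $w(s,\cdot;s)=1-\phi_{\min}(\cdot-y_{s})$ decays exponentially at rate $c_{\min}^{-}$ as $x\to-\infty$ (Lemma \ref{lem-app-exponential-decay}), so after possibly shrinking $c_{-}\leq c_{\min}^{-}$ and using the freedom in the constant $\hat C$ in \eqref{aux-interface-location-1} to push $\hat X(s;s)$ far enough left of $X_{\theta_{*}}(s;s)+x_{\min}$, one gets $w(s,x;s)\leq\psi(s,x;s)$ there; and $w\to0$ as $x\to-\infty$ uniformly on compact time intervals. Comparison on the moving domain $\{x\leq\hat X(t;s)\}$ then gives $1-u=w\leq\psi$, which is the desired estimate.

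\textbf{Main obstacle.} The routine part is the exponential computations; the delicate point is applying the comparison principle on the time-dependent, moving half-lines $\{x\geq\tilde X(t;s)\}$ and $\{x\leq\hat X(t;s)\}$ rather than on all of $\R$. One must verify the hypotheses of Proposition \ref{lem-comparison} in this moving-domain setting: that the ordering holds on the (moving) lateral boundary for all $t$, at the initial time, and that the relevant decay-at-infinity condition holds uniformly in $t$ on compact intervals (which is where Lemma \ref{lem-approximating-sol-prop}(i) and the a priori bounds \eqref{aprior-estimate-two-side1} are used). Making the initial-time comparison work requires matching the exponential rates against the wave-profile decay rates $c_{\min}^{\pm}$ of Lemma \ref{lem-app-exponential-decay} and using the slack constant $\hat C$; since $\beta$ in \eqref{stability-at-1} may be tiny, the resulting $c_{-}$ may be very small, but the argument is unaffected. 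A final bookkeeping point: the bounds must be uniform over $s<0$ and $t\geq s$, which is automatic because all the constants entering ($c_B$, $c_{\max}$, $\beta$, $\theta_*$, $d_{\max}$, $C_0$, and the wave-profile rates) are independent of $s$, and the initial profile $\phi_{\min}(\cdot-y_s)$ is merely a translate, so its decay rates do not depend on $s$.
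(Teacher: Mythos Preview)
Your proposal is correct and follows essentially the same approach as the paper: build exponential super/subsolutions on the moving half-lines, use $\dot X\ge c_B/2$ for the ahead estimate and $\dot X\le c_{\max}$ together with \eqref{stability-at-1} for the behind estimate, check the initial data via Lemma~\ref{lem-app-exponential-decay} (shrinking $c_\pm$ and, for the behind estimate, choosing $\hat C$ large), and apply Proposition~\ref{lem-comparison} on the moving domains. The only cosmetic difference is that the paper phrases the behind-interface argument via the operator $N_-[v]=v_t-[J\ast v-v]-\beta(1-v)$ applied to $u$ and to $1-e^{c_-(x-\hat X)}$, whereas you equivalently substitute $w=1-u$; also note that for the ahead estimate no ``$\hat C$-type freedom'' is actually needed, since $\tilde X(s;s)\ge X_\theta(s;s)=y_s$ already gives the initial comparison once $c_+\le c_{\min}^{+}$.
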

\begin{proof}
Define
\begin{equation*}
N_{-}[v]=v_{t}-[J\ast v-v]-\beta(1-v).
\end{equation*}
For $x\leq\hat{X}(t;s)$, we have $u(t,x;s)\geq\theta_{*}$, which together with \eqref{stability-at-1} implies that
\begin{equation*}
f(t,u(t,x;s))\geq f_{\min}(u(t,x;s))\geq\beta(1-u(t,x;s)),\quad x\leq\hat{X}(t;s)
\end{equation*}
It then follows that for $x\leq\hat{X}(t;s)$
\begin{equation*}
N_{-}[u]=u_{t}-[J\ast u-u]-\beta(1-u)=f(t,u)-\beta(1-u)\geq0.
\end{equation*}
For $c>0$, we compute
\begin{equation*}
\begin{split}
N_{-}[1-e^{c(x-\hat{X}(t;s))}]&=\bigg[c\dot{\hat{X}}(t;s)+\int_{\R}J(y)e^{-cy}dy-1-\beta\bigg]e^{c(x-\hat{X}(t;s))}\\
&\leq\bigg[cc_{\max}+\int_{\R}J(y)e^{-cy}dy-1-\beta\bigg]e^{c(x-\hat{X}(t;s))},
\end{split}
\end{equation*}
where we used the definition of $\hat{X}(t;s)$ and Theorem \ref{thm-interface-rightward-propagation-improved}. Since $\int_{\R}J(y)e^{-cy}dy\to1$ as $c\to0$, we can choose $c>0$ so small that $cc_{\max}+\int_{\R}J(y)e^{-cy}dy-1-\beta\leq0$, and then, $N_{-}[1-e^{c(x-\hat{X}(t;s))}]\leq0$. Hence, we have shown
\begin{equation*}
N_{-}[u]\geq0\geq N_{-}[1-e^{c_{-}(x-\hat{X}(t;s))}],\quad x\leq\hat{X}(t;s).
\end{equation*}
for some small $c_{-}>0$. Trivially, we have $u(t,x;s)>0\geq1-e^{c_{-}(x-\hat{X}(t;s))}$ for $x\geq\hat{X}(t;s)$. At the initial moment $s$, we obtain from Lemma \ref{lem-app-exponential-decay} that $u(s,x;s)=\phi_{\min}(x-y_{s})\geq1-e^{c_{-}(x-\hat{X}(s;s))}$ if we choose $c_{-}$ smaller and $\hat{C}$ sufficiently large. Then, we conclude from the comparison principle (see Proposition \ref{lem-comparison}$\rm(ii)$) that $u(t,x;s)\geq 1-e^{c_{-}(x-\hat{X}(t;s))}$ for $x\leq\hat{X}(t;s)$. This proves half of the theorem.

We now prove the other half. To do so, we define
\begin{equation*}
N_{+}[v]=v_{t}-[J\ast v-v].
\end{equation*}
Since $\tilde{X}(t;s)\geq X_{\theta}(t;s)$ by construction, we have $u(t,x;s)\leq\theta$ for $x\geq\tilde{X}(t;s)$, and hence, $f(t,u(t,x;s))=0$ for $x\geq\tilde{X}(t;s)$. From which, we deduce
\begin{equation*}
N_{+}[u]=u_{t}-[J\ast u-u]=f(t,u)=0,\quad x\geq\tilde{X}(t;s).
\end{equation*}
Let $c>0$. We compute
\begin{equation*}
\begin{split}
N_{+}[e^{-c(x-\tilde{X}(t;s))}]&=\bigg[c\dot{\tilde{X}}(t;s)-\int_{\R}J(y)e^{cy}dy+1\bigg]e^{-c(x-\tilde{X}(t;s))}\\
&\geq\bigg[\frac{cc_{B}}{2}-\int_{\R}J(y)e^{cy}dy+1\bigg]e^{-c(x-\tilde{X}(t;s))},
\end{split}
\end{equation*}
where we used Theorem \ref{thm-interface-rightward-propagation-improved}. Set $g(c)=\frac{cc_{B}}{2}-\int_{\R}J(y)e^{cy}dy+1$. Clearly, $g(0)=0$ and $g'(c)=\frac{c_{B}}{2}-\int_{\R}J(y)ye^{cy}dy$. Due to the symmetry of $J$, $\int_{\R}J(y)ye^{cy}dy\to0$ as $c\to0$. As a result, $g'(c)>0$ for all small $c>0$. Hence, we can find some $c_{+}>0$ such that $g(c^{+})>0$, and therefore, $N_{+}[e^{-c_{+}(x-\tilde{X}(t;s))}]\geq g(c_{+})e^{-c_{+}(x-\tilde{X}(t;s))}\geq0$. Hence, we have shown
\begin{equation*}
N_{+}[u]\leq0\leq N_{+}[e^{-c_{+}(x-\tilde{X}(t;s))}],\quad x\geq\tilde{X}(t;s).
\end{equation*}
for some small $c_{+}>0$. Since we have trivially $u(t,x;s)<1\leq e^{-c_{+}(x-\tilde{X}(t;s))}$ for $x\leq\tilde{X}(t;s)$ and, by Lemma \ref{lem-app-exponential-decay} and $\tilde{X}(s;s)\geq X_{\theta}(s;s)=y_{s}$, $u(s,x;s)=\phi_{\min}(x-y_{s})\leq e^{-c_{+}(x-\tilde{X}(t;s))}$ if we choose $c_{+}$ smaller, we conclude from the comparison principle (see Proposition \ref{lem-comparison}$\rm(i)$) that $u(t,x;s)\leq e^{-c_{+}(x-\tilde{X}(t;s))}$ for $x\geq\tilde{X}(t;s)$. This completes the proof.
\end{proof}

We remark that the fact that $\dot{X}(t;s)\geq\frac{c_{B}}{2}$ plays an essential role in deriving the uniform exponential decaying estimate of $u(t,x;s)$ at $\infty$.

As a simple consequence of Theorem \ref{thm-decaying-estimate}, we have

\begin{cor}\label{cor-bound-interface-width}
For any $0<\la_{1}<\la_{2}<1$, there holds
\begin{equation*}
\sup_{s<0,t\geq s}\big[X_{\la_{1}}(t;s)-X_{\la_{2}}(t;s)\big]<\infty.
\end{equation*}
In particular, for any $\la\in(0,1)$, there holds
\begin{equation*}
\sup_{s<0,t\geq s}\big|X_{\la}(t;s)-X(t;s)\big|<\infty.
\end{equation*}
\end{cor}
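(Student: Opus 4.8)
The plan is to read the corollary off directly from the two-sided exponential estimate of Theorem~\ref{thm-decaying-estimate}, using only the space monotonicity of $u(t,x;s)$ from Lemma~\ref{lem-approximating-sol-prop} (which guarantees $X_\la(t;s)$ is well-defined for every $\la\in(0,1)$) together with the finiteness of the shift $\sup_{s<0,t\ge s}|X_\theta(t;s)-X_{\la_*}(t;s)|$ entering the definition \eqref{aux-interface-location-2} of $\tilde X$, which is Theorem~\ref{thm-bounded-interface-width} applied to the pair $\theta<\la_*$. All the real work is already behind us; what remains is bookkeeping.

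Fix $\la\in(0,1)$. First I would evaluate the lower bound $u(t,x;s)\ge 1-e^{c_-(x-\hat X(t;s))}$ at $x=\hat X(t;s)+\tfrac1{c_-}\ln(1-\la)$; since $\ln(1-\la)<0$ this point lies to the left of $\hat X(t;s)$, so the estimate applies and gives $u(t,x;s)\ge\la$. Monotonicity then forces $X_\la(t;s)\ge\hat X(t;s)+\tfrac1{c_-}\ln(1-\la)$, and recalling $\hat X(t;s)=X(t;s)-d_{\max}-\hat C$ this reads
\[
X(t;s)-X_\la(t;s)\le d_{\max}+\hat C+\tfrac1{c_-}\ln\tfrac1{1-\la}=:C_1(\la),\qquad s<0,\ t\ge s.
\]
Symmetrically, evaluating the upper bound $u(t,x;s)\le e^{-c_+(x-\tilde X(t;s))}$ at $x=\tilde X(t;s)+\tfrac1{c_+}\ln\tfrac1\la$, a point to the right of $\tilde X(t;s)$ since $\ln\tfrac1\la>0$, gives $u(t,x;s)\le\la$, hence by monotonicity $X_\la(t;s)\le\tilde X(t;s)+\tfrac1{c_+}\ln\tfrac1\la$; recalling $\tilde X(t;s)=X(t;s)+\sup_{s<0,t\ge s}|X_\theta(t;s)-X_{\la_*}(t;s)|$ this reads
\[
X_\la(t;s)-X(t;s)\le \sup_{s<0,t\ge s}\big|X_\theta(t;s)-X_{\la_*}(t;s)\big|+\tfrac1{c_+}\ln\tfrac1\la=:C_2(\la),\qquad s<0,\ t\ge s.
\]
Combining the two displays gives $|X_\la(t;s)-X(t;s)|\le\max\{C_1(\la),C_2(\la)\}$ uniformly in $s<0$, $t\ge s$, which is the ``in particular'' part of the corollary.

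For the first assertion, given $0<\la_1<\la_2<1$ I would simply split
\[
X_{\la_1}(t;s)-X_{\la_2}(t;s)=\big[X_{\la_1}(t;s)-X(t;s)\big]+\big[X(t;s)-X_{\la_2}(t;s)\big]\le C_2(\la_1)+C_1(\la_2),
\]
which is finite and independent of $s<0$, $t\ge s$. There is no genuine obstacle in this argument; the only things to watch are that each point at which one of the estimates of Theorem~\ref{thm-decaying-estimate} is evaluated lies in the half-line where that estimate is valid (left of $\hat X(t;s)$, resp.\ right of $\tilde X(t;s)$), and that the shift defining $\tilde X$ is finite, which is exactly Theorem~\ref{thm-bounded-interface-width} with $\la_1=\theta$, $\la_2=\la_*$.
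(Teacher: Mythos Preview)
Your proof is correct and follows essentially the same approach as the paper: both read the result directly off the two-sided exponential bounds of Theorem~\ref{thm-decaying-estimate} together with the fact that $\hat X(t;s)$ and $\tilde X(t;s)$ differ from $X(t;s)$ by fixed constants. The paper states this in two lines (noting $\tilde X(t;s)-\hat X(t;s)\equiv\text{const.}$), while you spell out the explicit constants $C_1(\la)$, $C_2(\la)$, but the substance is identical.
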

\begin{proof}
By Theorem \ref{thm-decaying-estimate}, we have
\begin{equation*}
\max\big\{0,1-e^{c_{-}(x-\hat{X}(t;s))}\big\}\leq u(t,x;s)\leq\min\big\{1,e^{-c_{+}(x-\tilde{X}(t;s))}\big\}.
\end{equation*}
The result then follows from the fact that $\tilde{X}(t;s)-\hat{X}(t;s)\equiv\text{const.}$
\end{proof}


\section{Construction of transition fronts}\label{sec-transition-front}

In this section, we prove Theorem \ref{thm-transition-front} and Theorem \ref{thm-bounded-oscillation}. To do so, we prove uniform Lipschitz continuity of $u(t,x;s)$ in the space variable $x$.

\begin{lem}\label{lem-reguarity}
Suppose $\rm(H1)$ and $\rm(H2)$. There holds
\begin{equation*}
\sup_{x\neq y\atop s<0,t\geq s}\bigg|\frac{u(t,y;s)-u(t,x;s)}{y-x}\bigg|<\infty.
\end{equation*}
\end{lem}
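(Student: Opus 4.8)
The plan is to fix $\eta\in(0,1]$ and bound the nonnegative function $W(t,x):=u(t,x;s)-u(t,x+\eta;s)$ (nonnegativity is the space monotonicity of Lemma \ref{lem-approximating-sol-prop}) by $C\eta$ with $C$ independent of $x\in\R$, $t\ge s$, $s<0$ and $\eta\in(0,1]$; the general Lipschitz bound then follows by splitting an arbitrary increment $y-x$ into finitely many increments of length $\le1$ and summing, again using monotonicity. For fixed $x$, the function $t\mapsto W(t,x)$ is differentiable and, since $f$ is $C^{1}$ in $u$,
\begin{equation*}
\partial_{t}W(t,x)=(J\ast W)(t,x)-W(t,x)+a(t,x)W(t,x),\qquad a(t,x)=\int_{0}^{1}f_{u}\big(t,\tau u(t,x;s)+(1-\tau)u(t,x+\eta;s)\big)\,d\tau,
\end{equation*}
so $|a(t,x)|\le L:=\sup_{\R\times[0,1]}|f_{u}|<\infty$ by $\rm(H2)$. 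Two structural facts drive the argument. First, $(J\ast W)(t,x)=(J\ast u(t,\cdot;s))(x)-(J\ast u(t,\cdot;s))(x+\eta)$ and $J\ast u(t,\cdot;s)\in C^{1}$ with derivative $J'\ast u(t,\cdot;s)$; since $0\le u\le1$ this gives the uniform ``source'' bound $0\le(J\ast W)(t,x)\le\eta\int_{\R}|J'(x)|\,dx=:c_{1}\eta$. Second, by $\rm(H2)$ one has $f_{u}(t,\cdot)=0$ on $[0,\theta]$ (where $f\equiv0$) and $f_{u}(t,\cdot)\le0$ on $[\tilde\theta,1]$ (this is \eqref{decreasing-near-1}); hence $a(t,x)>0$ forces $f_{u}(t,\xi)>0$ for some $\xi$ strictly between $u(t,x+\eta;s)$ and $u(t,x;s)$, so $\xi\in(\theta,\tilde\theta)$, so $u(t,x;s)>\theta$ and $u(t,x+\eta;s)<\tilde\theta$, i.e.\ $X_{\tilde\theta}(t;s)-1<x<X_{\theta}(t;s)$. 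By Corollary \ref{cor-bound-interface-width} together with Theorem \ref{thm-interface-rightward-propagation-improved} there is a fixed $R>0$ with $\{x\in\R:a(t,x)>0\}\subseteq[X(t;s)-R,X(t;s)+R]$ for all $s<0$, $t\ge s$, $\eta\in(0,1]$. Consequently $\partial_{t}W(t,x)\le c_{1}\eta-W(t,x)$ whenever $x\notin[X(t;s)-R,X(t;s)+R]$, while always $\partial_{t}W(t,x)\le c_{1}\eta+LW(t,x)$.

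The second ingredient is geometric: since $X(\cdot;s)$ is continuous and strictly increasing with $\dot X(\cdot;s)\ge c_{B}/2$ (Theorem \ref{thm-interface-rightward-propagation-improved}), for each fixed $x$ the set $I_{x}:=\{t\ge s:x\in[X(t;s)-R,X(t;s)+R]\}$ is an interval of length $\le 4R/c_{B}=:\tau$, uniformly in $x,s,\eta$. I would then run a three-stage Gronwall argument in $t$ for fixed $x$: on $[s,\inf I_{x})$ the inequality $\partial_{t}W\le c_{1}\eta-W$ and the initial bound $W(s,x)=\phi_{\min}(x-y_{s})-\phi_{\min}(x+\eta-y_{s})\le\|\phi_{\min}'\|_{\infty}\eta$ give $W(t,x)\le\max\{\|\phi_{\min}'\|_{\infty},c_{1}\}\eta$; on $I_{x}$ the inequality $\partial_{t}W\le c_{1}\eta+LW$ over a time span $\le\tau$ multiplies this bound by a factor depending only on $L\tau$ (with an additive contribution $\le\frac{c_{1}\eta}{L}(e^{L\tau}-1)$); and on $(\sup I_{x},\infty)$ the first inequality again forces $W(t,x)$ below $\max\{\cdots,c_{1}\}\eta$ (when $I_{x}=\emptyset$ only the first regime occurs). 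This yields $W(t,x)\le C\eta$ with $C=C(\|\phi_{\min}'\|_{\infty},c_{1},L,\tau)$ independent of all parameters, which is exactly what is needed.

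The hard part will be keeping the half-width $R$ — hence $\tau$ and the final constant $C$ — genuinely independent of $\eta\in(0,1]$, $s<0$ and $t\ge s$: this is precisely where Corollary \ref{cor-bound-interface-width} (boundedness of $X_{\la_{1}}(t;s)-X_{\la_{2}}(t;s)$ for \emph{all} $0<\la_{1}<\la_{2}<1$, not merely up to $\la_{*}$) is indispensable to confine $\{a(t,\cdot)>0\}$ to a fixed-width window around $X(t;s)$, and where the uniform lower bound $\dot X(\cdot;s)\ge c_{B}/2$ from Theorem \ref{thm-interface-rightward-propagation-improved} is indispensable so that the window sweeps past any fixed $x$ in bounded time; without either one $W(t,x)$ could be amplified over an unbounded time interval and the bound would fail. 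A minor technical point is to justify the $t$-differentiability of $W(t,x)$, or — more robustly — to phrase the three-stage estimate via the mild formulation $W(t,x)=e^{-(t-s)}W(s,x)+\int_{s}^{t}e^{-(t-r)}\big[(J\ast W)(r,x)+a(r,x)W(r,x)\big]\,dr$ and comparison with solutions of the scalar ODEs $\dot\phi=c_{1}\eta-\phi$ and $\dot\phi=c_{1}\eta+L\phi$.
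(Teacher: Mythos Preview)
Your proposal is correct and follows essentially the same route as the paper: bound the nonlocal term by $c_{1}\eta$ via the $J'$-trick, confine the set $\{a(t,\cdot)>0\}$ to a fixed-width window around $X(t;s)$ using Corollary \ref{cor-bound-interface-width}, observe that this window sweeps past any fixed $x$ in time at most $4R/c_{B}$ by Theorem \ref{thm-interface-rightward-propagation-improved}, and run a three-stage Gronwall argument (before/during/after the window). The paper works with the signed quotient $v^{\eta}=(u(t,x+\eta;s)-u(t,x;s))/\eta$ and the explicit ODI solution formula \eqref{ODIs-sol} rather than your nonnegative $W$ and direct Gronwall, and it organizes the three stages as a case split on whether $x_{0}$ initially lies in $R_{l}$, $R_{m}$ or $R_{r}$, but the substance is identical.
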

\begin{proof}
Since $u(t,x;s)\in(0,1)$, there holds trivially
\begin{equation*}
\forall\de>0,\quad\sup_{|y-x|\geq\de\atop s<0,t\geq s}\bigg|\frac{u(t,y;s)-u(t,x;s)}{y-x}\bigg|<\infty.
\end{equation*}
Thus, to finish the proof of the lemma, it suffices to show the local uniform Lipschitz continuity, that is,
\begin{equation}\label{locally-uniform-Lip-continuity}
\forall\de>0,\quad\sup_{0<|y-x|\leq\de\atop s<0,t\geq s}\bigg|\frac{u(t,y;s)-u(t,x;s)}{y-x}\bigg|<\infty.
\end{equation}

To this end, we fix $\de>0$. Let $X(t;s)$ be as in Theorem \ref{thm-interface-rightward-propagation-improved} and define
\begin{equation*}
L_{1}=\de+\sup_{s<0,t\geq s}\big|X_{\theta}(t;s)-X(t;s)\big|\quad\text{and}\quad
L_{2}=\de+\sup_{s<0,t\geq s}\big|X_{\tilde{\theta}}(t;s)-X(t;s)\big|,
\end{equation*}
where $\tilde{\theta}\in(\theta,1)$ is given in $\rm(H2)$. Notice $L_{1}<\infty$ and $L_{2}<\infty$ by Corollary \ref{cor-bound-interface-width}. Then, for any $0<|y-x|\leq\de$ we have
\begin{itemize}
\item if $x\geq X(t;s)+L_{1}$, then $y\geq x-\de\geq X_{\theta}(t;s)$, which implies that $f(t,u(t,y;s))=0=f(t,u(t,x;s))$,
and hence
\begin{equation}\label{estimate-aux-1}
\frac{f(t,u(t,y;s))-f(t,u(t,x;s))}{u(t,y;s)-u(t,x;s)}=0;
\end{equation}

\item if $x\leq X(t;s)-L_{2}$, then $y\leq x+\de\leq X_{\tilde{\theta}}(t;s)$, which implies that $u(t,y;s)\geq\tilde{\theta}$ and $u(t,x;s)\geq\tilde{\theta}$, and hence by $\rm(H2)$
\begin{equation}\label{estimate-aux-2}
\frac{f(t,u(t,y;s))-f(t,u(t,x;s))}{u(t,y;s)-u(t,x;s)}\leq0.
\end{equation}
\end{itemize}
According to \eqref{estimate-aux-1} and \eqref{estimate-aux-2}, we consider time-dependent disjoint decompositions of $\R$ into
\begin{equation*}
\R=R_{l}(t;s)\cup R_{m}(t;s)\cup R_{r}(t;s),
\end{equation*}
where
\begin{equation*}
\begin{split}
R_{l}(t;s)&=(-\infty,X(t;s)-L_{2}),\\
R_{m}(t;s)&=[X(t;s)-L_{2}, X(t;s)+L_{1}]\quad\text{and}\\
R_{r}(t;s)&=(X(t;s)+L_{1},\infty).
\end{split}
\end{equation*}
Since $X(t;s)$ is continuous in $t$, this three regions change continuously in $t$. As $X(t;s)$ moves to the right by Theorem \ref{thm-interface-rightward-propagation-improved}, any fixed point will eventually enter into $R_{l}(t;s)$ and stay there forever.

For $s<0$ and $x_{0}\in\R$, let $t_{\rm first}(x_{0};s)$ be the first time that $x_{0}$ is in $R_{m}(t;s)$, that is,
\begin{equation*}
t_{\rm first}(x_{0};s)=\min\big\{t\geq s\big|x_{0}\in R_{m}(t;s)\big\},
\end{equation*}
and $t_{\rm last}(x_{0};s)$ be the last time that $x_{0}$ is in $R_{m}(t;s)$, that is,
\begin{equation*}
t_{\rm last}(x_{0};s)=\max\big\{t_{0}\in\R\big|x_{0}\in R_{m}(t_{0};s)\,\,\text{and}\,\,x_{0}\notin R_{m}(t,s)\,\,\text{for}\,\,t>t_{0}\big\}.
\end{equation*}
Since $X(t;s)$ moves to the right, if $x_{0}\in R_{l}(s;s)$, then $x_{0}\in R_{l}(t;s)$ for all $t>s$. In this case, $t_{\rm first}(x_{0};s)$ and $t_{\rm last}(x_{0};s)$ are not well-defined, but it will not cause any trouble. Clearly, $x_{0}\in R_{l}(t;s)$ for all $t>t_{\rm last}(x_{0};s)$.

We see that either both $t_{\rm first}(x_{0};s)$ and $t_{\rm last}(x_{0};s)$ are well-defined, or both of them are not well-defined. In fact, $t_{\rm first}(x_{0};s)$ and $t_{\rm last}(x_{0};s)$ are well-defined only if $x_{0}\notin R_{l}(s;s)$. As a simple consequence of Theorem \ref{thm-interface-rightward-propagation-improved} and the fact that the length of $R_{m}(t;s)$ is $L_{1}+L_{2}$, we have
\begin{equation}\label{growth-period}
T=T(\de):=\sup_{s<0,x_{0}\notin R_{l}(s;s)}\big[t_{\rm last}(x_{0};s)-t_{\rm first}(x_{0};s)\big]<\infty.
\end{equation}

Now, we are ready to prove the lemma. Fix $x_{0}\in\R$, $s<0$ and $0<|\eta|\leq\de$. Set
\begin{equation*}
v^{\eta}(t,x;s)=\frac{u(t,x+\eta;s)-u(t,x;s)}{\eta}.
\end{equation*}
Clearly, $v^{\eta}(t,x_{0};s)$ satisfies
\begin{equation*}
v^{\eta}_{t}(t,x_{0};s)=\int_{\R}J(x_{0}-y)v^{\eta}(t,y;s)dy-v^{\eta}(t,x_{0};s)+a^{\eta}(t,x_{0};s)v^{\eta}(t,x_{0};s),
\end{equation*}
where
\begin{equation*}
a^{\eta}(t,x_{0};s)=\frac{f(t,u(t,x_{0}+\eta;s))-f(t,u(t,x_{0};s))}{u(t,x_{0}+\eta;s)-u(t,x_{0};s)}
\end{equation*}
is uniformly bounded. Notice $\int_{\R}J(x_{0}-y)v^{\eta}(t,y;s)dy$ is bounded uniformly in $x_{0}$, $\eta$, $s$ and $t$. In fact, the change of variable gives
\begin{equation*}
\int_{\R}J(x_{0}-y)v^{\eta}(t,y;s)dy=\int_{\R}\frac{J(x_{0}-y+\eta)-J(x_{0}-y)}{\eta}u(t,y;s)dy.
\end{equation*}
The uniform boundedness then follows from the fact $u(t,x;s)\in(0,1)$ and the assumption $J'\in L^{1}(\R)$ by $\rm(H1)$.

Setting 
\begin{equation*}
M=M(\de):=\sup_{s<0,t\geq s}\sup_{x_{0}\in\R,0<|\eta|\leq\de}\bigg|\int_{\R}J(x_{0}-y)v^{\eta}(t,y;s)dy\bigg|, 
\end{equation*}
we see that $v^{\eta}(t,x_{0};s)$ satisfies
\begin{equation}\label{ODIs}
\begin{split}
&-M-v^{\eta}(t,x_{0};s)+a^{\eta}(t,x_{0};s)v^{\eta}(t,x_{0};s)\\
&\quad\quad\leq v^{\eta}_{t}(t,x_{0};s)\leq M-v^{\eta}(t,x_{0};s)+a^{\eta}(t,x_{0};s)v^{\eta}(t,x_{0};s),
\end{split}
\end{equation}
which essentially are ordinary differential inequalities. The solution of \eqref{ODIs} satisfies
\begin{equation}\label{ODIs-sol}
\begin{split}
&v^{\eta}(t_{0},x_{0};s)e^{-\int_{t_{0}}^{t}(1-a^{\eta}(\tau,x_{0};s))d\tau}-M\int_{t_{0}}^{t}e^{-\int_{r}^{t}(1-a^{\eta}(\tau,x_{0};s))d\tau}dr\\
&\quad\quad\leq v^{\eta}(t,x_{0};s)\leq v^{\eta}(t_{0},x_{0};s)e^{-\int_{t_{0}}^{t}(1-a^{\eta}(\tau,x_{0};s))d\tau}+M\int_{t_{0}}^{t}e^{-\int_{r}^{t}(1-a^{\eta}(\tau,x_{0};s))d\tau}dr
\end{split}
\end{equation}
for $s\leq t_{0}\leq t$. Notice $1-a^{\eta}(t,x_{0};s)$ controls the behavior of $v^{\eta}(t,x_{0};s)$. We see, initially,
\begin{equation*}
v^{\eta}(s,x_{0};s)=\frac{u(s,x_{0}+\eta;s)-u(s,x_{0};s)}{\eta}=\frac{\phi_{\min}(x_{0}+\eta-y_{s})-\phi_{\min}(x_{0}-y_{s})}{\eta},
\end{equation*}
which is uniformly bounded (uniform in $s<0$, $x_{0}\in\R$ and $0<|\eta|\leq\de$, and even in $\de>0$) since $\phi_{\min}\in C^{1}(\R)$. Now,
\begin{itemize}
\item[\rm(i)] if $x_{0}\in R_{l}(s;s)$, then $x_{0}\in R_{l}(t;s)$ for all $t\geq s$, which implies that $a^{\eta}(t,x_{0};s)\leq0$ for all $t\geq s$ by \eqref{estimate-aux-2}. We then conclude from \eqref{ODIs-sol} that $\sup_{t\geq s}|v^{\eta}(t,x_{0};s)|\leq C(M,\|\phi_{\min}\|_{C^{1}(\R)})$;

\item[\rm(ii)] if $x_{0}\in R_{m}(s;s)$, then $t_{\rm first}(x_{0};s)=s$. For $t\in[t_{\rm first}(x_{0};s),t_{\rm last}(x_{0};s)]$, we conclude from \eqref{ODIs-sol} and the fact that $a^{\eta}(t,x_{0};s)$ is uniformly bounded that $v^{\eta}(t,x_{0};s)$ at most grows exponentially with growth rate not larger than some universal constant, and this expnential growth can only last for a period not longer than $T$, which is given in \eqref{growth-period}. As a result, $|v^{\eta}(t_{\rm last}(x_{0};s),x_{0};s)|\leq C(M,\|\phi_{\min}\|_{C^{1}(\R)},T)$. As mentioned before, for $t>t_{\rm last}(x_{0};s)$, we have $x_{0}\in R_{l}(t;s)$, which together with \eqref{estimate-aux-2} implies that $a^{\eta}(t,x_{0};s)\leq0$. Then, as in $\rm(i)$, we conclude from  \eqref{ODIs-sol} that $|v^{\eta}(t,x_{0};s)|\leq C(M,\|\phi_{\min}\|_{C^{1}(\R)},T)$ for $t>t_{\rm last}(x_{0};s)$. Hence, $\sup_{t\geq s}|v^{\eta}(t,x_{0};s)|\leq C(M,\|\phi_{\min}\|_{C^{1}(\R)},T)$;

\item[\rm(iii)] if $x_{0}\in R_{r}(s;s)$, then $t_{\rm first}(x_{0};s)>s$. For $t\in[s,t_{\rm first}(x_{0};s))$, we have $x_{0}\in R_{r}(t;s)$, which implies $a^{\eta}(t,x_{0};s)=0$ by \eqref{estimate-aux-1}. Notice the interval $[s,t_{\rm first}(x_{0};s))$ may not have uniformly bounded length, but \eqref{ODIs-sol} says that as long as $t\in[s,t_{\rm first}(x_{0};s))$, we have $|v^{\eta}(t,x_{0};s)|\leq C(M,\|\phi_{\min}\|_{C^{1}(\R)})$, which implies, $v^{\eta}(t_{\rm first}(x_{0};s),x_{0};s)\leq C(M,\|\phi_{\min}\|_{C^{1}(\R)})$. Then, we can follow the arguments in $\rm(ii)$ to conclude that $\sup_{t\geq s}|v^{\eta}(t,x_{0};s)|\leq C(M,\|\phi_{\min}\|_{C^{1}(\R)},T)$.
\end{itemize}
Consequently, $\sup_{t\geq s}|v^{\eta}(t,x_{0};s)|\leq C(M,\|\phi_{\min}\|_{C^{1}(\R)},T)$. Since $x_{0}\in\R$ and $s<0$ are arbitrary, and $T$ and $M$ depend only on $\de$, we find \eqref{locally-uniform-Lip-continuity}, and hence, finish the proof of the lemma.
\end{proof}

Now, we prove Theorem \ref{thm-transition-front}.

\begin{proof}[Proof of Theorem \ref{thm-transition-front}]
Since $u=u(t,x;s)$ satisfies $u_{t}=J\ast u-u+f(t,u)$, we conclude from $\rm(H2)$ and the fact that $u(t,x;s)\in(0,1)$ that
\begin{equation}\label{1-bound-time-derivative}
\sup_{s<0,t>s}|u_{t}(t,x;s)|<\infty.
\end{equation}
Then, since $v(t,x;s):=u_{t}(t,x;s)$ satisfies
\begin{equation*}
v_{t}=J\ast v-v+f_{t}(t,u(t,x;s))+f_{u}(t,u(t,x;s))v,
\end{equation*}
we conclude from \eqref{1-bound-time-derivative} and $\rm(H2)$ that
\begin{equation}\label{2-bound-time-derivative}
\sup_{s<0,t>s}|u_{tt}(t,x;s)|=\sup_{s<0,t>s}|v_{t}(t,x;s)|<\infty.
\end{equation}
Now, by Lemma \ref{lem-reguarity}, \eqref{1-bound-time-derivative}, \eqref{2-bound-time-derivative}, Arzel\`{a}-Ascoli theorem and the diagonal argument, we are able to find some continuous function $u(t,x):\R\times\R\ra[0,1]$ that is differentiable in $t$ and nonincreasing in $x$ such that
\begin{equation*}
u(t,x;s)\to u(t,x)\quad\text{and} \quad u_{t}(t,x;s)\to u_{t}(t,x)
\end{equation*}
locally uniformly in $(t,x)\in\R\times\R$ as $s\ra-\infty$ along some subsequence. In particular, $u(t,x)$ is an entire classical solution of \eqref{main-eqn}. Moreover, as an entire solution, $u(t,x)\in(0,1)$ and it is strictly decreasing in $x$. The uniform Lipschitz continuity in space of $u(t,x)$ also follows.

We now show the decaying properties of $u(t,x)$. Recall $\hat{X}(t;s)$ and $\tilde{X}(t;s)$ are given in \eqref{aux-interface-location-1} and \eqref{aux-interface-location-2}, respectively. By Theorem \ref{thm-interface-rightward-propagation-improved}, $\{X(t;s)\}_{s<0}$, $\{\hat{X}(t;s)\}_{s<0}$ and $\{\tilde{X}(t;s)\}_{s<0}$ converge locally uniformly to a continuously differentiable functions $X(t)$, $\hat{X}(t)$ and $\tilde{X}(t)$, respectively. Clearly, $\frac{c_{B}}{2}\leq\dot{X}(t)\leq c_{\max}$, $X(t)-\hat{X}(t)=h_{-}$ and $\tilde{X}-X(t)=h_{+}$ for all $t\in\R$, where $h_{\pm}>0$ are constants. In particular,
\begin{equation*}
u(t,x+\hat{X}(t;s);s)\to u(t,x+\hat{X}(t))\quad\text{and}\quad u(t,x+\tilde{X}(t;s);s)\to u(t,x+\tilde{X}(t))
\end{equation*}
locally uniformly in $(t,x)\in\R\times\R$ as $s\ra-\infty$ along some subsequence, which together with Theorem \ref{thm-decaying-estimate} implies that
\begin{equation}\label{transition-front-exponential-decay}
\begin{split}
u(t,x+\hat{X}(t))&\geq1-e^{c_{-}x},\quad x\leq0,\\
u(t,x+\tilde{X}(t))&\leq e^{-c_{+}x},\quad x\geq0
\end{split}
\end{equation}
for all $t\in\R$. This completes the proof.
\end{proof}

Finally, we prove Theorem \ref{thm-bounded-oscillation}.

\begin{proof}[Proof of Theorem \ref{thm-bounded-oscillation}]
We rewrite \eqref{transition-front-exponential-decay} as
\begin{equation}\label{transition-front-exponential-decay-1}
1-e^{c_{-}(x-\hat{X}(t))}\leq u(t,x)\leq e^{-c_{+}(x-\tilde{X}(t))},\quad x\in\R,\,\,t\in\R.
\end{equation}
For $\la\in(0,1)$, let $\hat{X}_{\la}(t)$ be the unique solution of $1-e^{c_{-}(x-\hat{X}(t))}=\la$, and $\tilde{X}_{\la}(t)$ be the unique solution of $e^{-c_{+}(x-\tilde{X}(t))}=\la$. Clearly,
\begin{equation}\label{interface-location-at-la-aux}
\begin{split}
\hat{X}_{\la}(t)&=\hat{X}(t)+\frac{\ln(1-\la)}{c_{-}}=X(t)-h_{-}+\frac{\ln(1-\la)}{c_{-}},\\
\tilde{X}_{\la}(t)&=\tilde{X}(t)-\frac{\ln\la}{c_{+}}=X(t)+h_{+}-\frac{\ln\la}{c_{+}}.
\end{split}
\end{equation}
Let $X_{\la}(t)$ be such that $u(t,X_{\la}(t))=\la$. From \eqref{transition-front-exponential-decay-1}, we see that $X_{\la}(t)\in[\hat{X}_{\la}(t),\tilde{X}_{\la}(t)]$, which together with \eqref{interface-location-at-la-aux} implies that
\begin{equation}\label{bounded-interface-width-transition-front}
\sup_{t\in\R}|X_{\la}(t)-X(t)|\leq h(\la):=\max\bigg\{h_{-}-\frac{\ln(1-\la)}{c_{-}},h_{+}-\frac{\ln\la}{c_{+}}\bigg\}.
\end{equation}

Set $\bar{C}:=\sup_{(t,x)\in\R\times\R}|u_{t}(t,x)|<\infty$. For any $t\in\R$ and $\eta>0$, we have
\begin{equation*}
u(t+\eta,X_{\theta}(t))-\theta=u(t+\eta,X_{\theta}(t))-u(t,X_{\theta}(t))=u_{t}(t+\bar{\eta},X_{\theta}(t))\eta
\end{equation*}
for some $\bar{\eta}\in[0,\eta]$, which implies that $u(t+\eta,X_{\theta}(t))\in[\theta-\bar{C}\eta,\theta+\bar{C}\eta]$. Let $\eta_{0}>0$ be such that $\bar{C}\eta_{0}=\min\big\{\frac{\theta}{2},\frac{1-\theta}{2}\big\}$, we see that
\begin{equation*}
u(t+\eta,X_{\theta}(t))\in\bigg[\frac{\theta}{2},\frac{1+\theta}{2}\bigg],\quad t\in\R,\,\,\eta\in[0,\eta_{0}],
\end{equation*}
which together with space monotonicity implies that
\begin{equation*}
X_{\frac{1+\theta}{2}}(t+\eta)\leq X_{\theta}(t)\leq X_{\frac{\theta}{2}}(t+\eta),\quad t\in\R,\,\,\eta\in[0,\eta_{0}].
\end{equation*}
Clearly, as a simple consequence of \eqref{bounded-interface-width-transition-front}, we have
\begin{equation*}
\begin{split}
\sup_{t\in\R,\eta\in[0,\eta_{0}]}|X_{\theta}(t+\eta)-X_{\frac{\theta}{2}}(t+\eta)|&\leq h(\theta)+h(\frac{\theta}{2}),\\
\sup_{t\in\R,\eta\in[0,\eta_{0}]}|X_{\theta}(t+\eta)-X_{\frac{1+\theta}{2}}(t+\eta)|&\leq h(\theta)+h(\frac{1+\theta}{2}).
\end{split}
\end{equation*}
Hence, we deduce
\begin{equation*}
\sup_{t\in\R,\eta\in[0,\eta_{0}]}|X_{\theta}(t+\eta)-X_{\theta}(t)|\leq C_{1}:=\max\bigg\{h(\theta)+h(\frac{\theta}{2}),h(\theta)+h(\frac{1+\theta}{2})\bigg\}.
\end{equation*}
It then follows that for any $\de=k\eta_{0}+\de_{0}$ with $k\in\N_{0}$ and $\de_{0}\in[0,\eta_{0})$, we have
\begin{equation}\label{bounded-oscillation-estimate}
\sup_{|t-s|\leq\de}|X_{\theta}(t)-X_{\theta}(s)|\leq(k+1)C_{1}.
\end{equation}

The "in particular" part follows if we set $\de=1$ in \eqref{bounded-oscillation-estimate} and iterate. It also immediately follows from that the facts that $\sup_{t\in\R}|X_{\theta}(t)-X(t)|<\infty$ and $\dot{X}(t)\in[\frac{c_{B}}{2},c_{\max}]$ for all $t\in\R$. This completes the proof.
\end{proof}


\section*{Acknowledgements}

The authors would like to thank the referees for carefully reading the original manuscript and providing helpful suggestions.


\appendix

\section{Comparison principles}\label{sec-app-CP}

We prove comparison principles used in the previous sections.

\begin{prop}\label{lem-comparison}
Let $K:\R\times\R\to[0,\infty)$ be continuous and satisfy $\sup_{x\in\R}\int_{\R}K(x,y)dy<\infty$.
Let  $a:\R\times\R\to\R$ be continuous and uniformly bounded.

\begin{itemize}
\item[\rm(i)] Suppose that $X:[0,\infty)\to\R$ is continuous and that $u:[0,\infty)\times\R\to\R$ satisfies the following:   $u, u_t:[0,\infty)\times\R\to\R$
are continuous, the limit $\lim_{x\ra\infty}u(t,x)=0$ is locally uniformly in $t$,  and
\begin{equation*}
\begin{cases}
u_{t}(t,x)\geq \int_{\R}K(x,y)u(t,y)dy+a(t,x)u(t,x),\quad x>X(t),\,\,t>0,\\
u(t,x)\geq0,\quad x\leq X(t),\,\,t>0,\\
u(0,x)=u_{0}(x)\geq0,\quad x\in\R.
\end{cases}
\end{equation*}
Then $u(t,x)\geq0$ for $(t,x)\in(0,\infty)\times\R$.

\item[\rm(ii)]  Suppose that $X:[0,\infty)\to\R$ is continuous and that $u:[0,\infty)\times\R\to\R$ satisfies the following:   $u, u_t:[0,\infty)\times\R\to\R$
are continuous, the limit $\lim_{x\ra -\infty}u(t,x)=0$ is locally uniformly in $t$,  and
\begin{equation*}
\begin{cases}
u_{t}(t,x)\geq \int_{\R}K(x,y)u(t,y)dy+a(t,x)u(t,x),\quad x<X(t),\,\,t>0,\\
u(t,x)\geq0,\quad x\ge X(t),\,\,t>0,\\
u(0,x)=u_{0}(x)\geq0,\quad x\in\R.
\end{cases}
\end{equation*}
Then $u(t,x)\geq0$ for $(t,x)\in(0,\infty)\times\R$.

\item[\rm(iii)] Suppose  that $u:[0,\infty)\times\R\to\R$ satisfies the following:   $u, u_t:[0,\infty)\times\R\to\R$
is continuous, $\inf_{t\ge 0,x\in\R}u(t,x)>-\infty$, and
\begin{equation*}
\begin{cases}
u_{t}(t,x)\geq \int_{\R}K(x,y)u(t,y)dy+a(t,x)u(t,x),\quad x\in\R,\,\,t>0,\\
u(0,x)=u_{0}(x)\geq0,\quad x\in\R.
\end{cases}
\end{equation*}
Then $u(t,x)\geq0$ for $(t,x)\in(0,\infty)\times\R$. Moreover, if $u_0(x)\not\equiv 0$, then
$u(t,x)>0$ for $(t,x)\in(0,\infty)\times\R$.
\end{itemize}
\end{prop}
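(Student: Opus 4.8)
The plan is to reduce all three parts to a single shape by an exponential‑in‑time change of variables, and then to run a perturbation plus first‑touching‑time argument; a quicker route for part (iii) is indicated at the end.

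\emph{Reduction and set‑up.} Put $\Lambda:=\sup_{x}\int_\R K(x,y)\,dy<\infty$, $A:=\sup|a|<\infty$, $\kappa:=\Lambda+A+1$, and $v:=e^{-\kappa t}u$. A direct computation shows $v$ satisfies exactly the same hypotheses with $a$ replaced by $c:=a-\kappa$, which now obeys $c(t,x)\le-\Lambda-1<0$; the sign condition off the (moving) region, the initial inequality $v(0,\cdot)=u_0\ge0$, and the one‑sided decay at $\pm\infty$ are untouched, and in (iii) one still has $\inf_{[0,T]\times\R}v>-\infty$ for every $T>0$. So assume henceforth $a\equiv c\le-\Lambda-1$. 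Fix $T>0$ and $\ep>0$ and set $w:=v+\ep\psi$, with $\psi(t,x):=e^{\lambda t}$ in cases (i), (ii) and $\psi(t,x):=e^{\lambda t}\sqrt{1+x^2}$ in case (iii), where $\lambda$ is a large constant. In (iii) the coercive weight is admissible because the kernels occurring in the paper are convolution kernels, so $\int_\R K(x,y)\sqrt{1+y^2}\,dy\le\sqrt{1+x^2}+C$; choosing $\lambda$ large (depending only on $\Lambda,C$) and using $c\le-\Lambda-1$, $\int_\R K(x,y)\,dy\le\Lambda$, one checks, on the region where the differential inequality for $u$ is assumed,
\begin{equation*}
w_t(t,x)-\int_\R K(x,y)w(t,y)\,dy-c(t,x)w(t,x)\ \ge\ \ep\,e^{\lambda t}\ >\ 0,
\end{equation*}
i.e.\ $w$ is a strict supersolution there. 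Moreover $w(0,\cdot)>0$; $w>0$ on the complementary region, where $v\ge0$ is given; and on $[0,T]\times(\text{region})$ the function $w$ is bounded below and tends to $+\infty$ as $|x|\to\infty$ in (iii), resp.\ to $\ep e^{\lambda t}>0$ as $x\to+\infty$ in (i) or as $x\to-\infty$ in (ii).

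\emph{First‑touching‑time argument.} Let $\mu:=\inf w$ over $[0,T]\times(\text{region})$ and suppose $\mu\le0$. The lower bound and the behaviour at spatial infinity above, together with continuity of $w$ and of $X$, force $\mu$ to be attained at some $(t_*,x_*)$ with $t_*>0$ and $x_*$ strictly inside the region — it cannot lie on $\{t=0\}$ or on $\{x=X(t)\}$, where $w>0$. At such a point $\partial_t w(t_*,x_*)\le0$ (minimum of $w(\cdot,x_*)$ over $s\le t_*$), while $w(t_*,\cdot)\ge\mu$ on all of $\R$ (on the complementary region $w>0\ge\mu$), so $\int_\R K(x_*,y)w(t_*,y)\,dy\ge\mu\int_\R K(x_*,y)\,dy$ and $c(t_*,x_*)w(t_*,x_*)=c(t_*,x_*)\mu$. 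Since $\mu\le0$ and $\int_\R K(x_*,y)\,dy+c(t_*,x_*)\le\Lambda-\Lambda-1<0$, we get $\mu\big(\int_\R K(x_*,y)\,dy+c(t_*,x_*)\big)\ge0$, and the displayed inequality yields $0\ge\partial_t w(t_*,x_*)\ge\ep e^{\lambda t_*}>0$, a contradiction. Hence $\mu>0$, so $v>-\ep\psi$ on $[0,T]\times(\text{region})$; letting $\ep\downarrow0$ for fixed $(t,x)$ and then $T\uparrow\infty$, and combining with $v\ge0$ off the region, gives $v\ge0$, i.e.\ $u\ge0$, on $[0,\infty)\times\R$. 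This proves (i)–(iii).

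\emph{Strict positivity in (iii), and the main obstacle.} Suppose $u_0\not\equiv0$ and $v(t_0,x_0)=0$ with $t_0>0$. Since $v\ge0$, $x_0$ is a spatial minimum of $v(t_0,\cdot)$, and $\partial_t v(t_0,x_0)\ge\int_\R K(x_0,y)v(t_0,y)\,dy\ge0$; a strict inequality would make $v(\cdot,x_0)<0$ just before $t_0$, impossible, so $\int_\R K(x_0,y)v(t_0,y)\,dy=0$, whence $v(t_0,\cdot)$ vanishes on $\{y:K(x_0,y)>0\}=x_0+U$ with $U=\{J>0\}$ open, nonempty, symmetric. Iterating this observation, $v(t_0,\cdot)$ vanishes on $x_0+(U+\cdots+U)$; since $U-U=U+U$ contains an open neighbourhood of $0$, the zero set of $v(t_0,\cdot)$ is open, closed and nonempty, hence $\R$. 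Then $v_t\ge cv$ for $t\le t_0$ gives, by Gronwall, $v\equiv0$ on $[0,t_0]\times\R$, so $u_0\equiv0$, a contradiction; thus $u>0$ on $(0,\infty)\times\R$. The one genuine difficulty throughout is the non‑compactness of $\R$: one must prevent the infimum in the touching‑time step from escaping to spatial infinity, and this is exactly what the one‑sided decay hypotheses supply in (i), (ii) and what the coercive weight supplies in (iii). For (iii) one may bypass it by variation of constants, $v(t)=\Phi(t,0)u_0+\int_0^t\Phi(t,s)h(s)\,ds$ with $h:=v_t-\int_\R K(\cdot,y)v(y)\,dy-cv\ge0$, where $\Phi(t,s)$ is the globally defined evolution family on $BC(\R)$ generated by the bounded operator $v\mapsto\int_\R K(\cdot,y)v(y)\,dy+c(t,\cdot)v$; adding $\|c\|_\infty I$ makes this generator a positive operator, so $\Phi(t,s)$ preserves positivity, and $u_0\ge0$, $h\ge0$ give $v\ge0$ at once.
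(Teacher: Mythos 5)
Your argument for parts (i) and (ii) is correct but takes a genuinely different route from the paper's. You normalize so that the zeroth‑order coefficient is very negative ($c\le -\Lambda-1$, via $v=e^{-\kappa t}u$), add the strict supersolution $\ep e^{\lambda t}$, and reach a contradiction at the exact minimizer of $w$ in the slab. The paper normalizes in the opposite direction (multiplying by $e^{rt}$ so that $a>0$), fixes a short time step $\tau=\frac{1}{2(K_0+a_0)}$, picks the minimizer $(t_1,x_1)$ of $u$ over the slab, lets $s_1$ be the last time before $t_1$ at which $u(\cdot,x_1)\ge 0$, and integrates the inequality on $[s_1,t_1]$ to obtain $[1-(K_0+a_0)\tau]\,u(t_1,x_1)\ge u(s_1,x_1)\ge 0$, a contradiction. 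Yours is the classical touching‑point maximum‑principle argument; the paper's is an integral Gronwall‑type argument. One structural advantage of the latter is that it does not really need the infimum to be attained: replacing $(t_1,x_1)$ by a nearly‑minimizing sequence $(t_n,x_n)$ and $s_1$ by $s_n=\max\{t\le t_n:u(t,x_n)\ge -1/n\}$, and bounding $\int K(x_n,y)u\,dy\ge K_0\inf u$ and $au\ge a_0\inf u$, one gets $(1-(K_0+a_0)\tau)\inf u\ge 0$; this handles part (iii) with no assumption on $K$ beyond $\sup_x\int K(x,y)\,dy<\infty$.

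Your treatment of part (iii) has a gap relative to the statement as written. The coercive weight $\psi=e^{\lambda t}\sqrt{1+x^2}$ needs a first‑moment bound $\int_\R K(x,y)\sqrt{1+y^2}\,dy\le\sqrt{1+x^2}+C$, which does not follow from $\sup_x\int K(x,y)\,dy<\infty$ (take, e.g., $K(x,y)=g(x)/(1+y^2)$ with $g$ bounded, continuous, nonnegative). You acknowledge this by invoking the convolution structure of the kernels used in the paper; that is fine for the paper's actual applications (always $K=J(\cdot-\cdot)$ with $J$ satisfying (H1), which has all exponential moments), but it does not prove the proposition in the generality in which it is stated. The variation‑of‑constants fallback presupposes that $u$ — and in effect $u_t$ — is bounded, so that the evolution family acts on $BC(\R)$; again this is stronger than the stated hypothesis $\inf u>-\infty$. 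Your chain/irreducibility argument for strict positivity likewise tacitly uses $K(x,y)=J(x-y)$ with $J\not\equiv 0$, an assumption not present in the hypotheses of the proposition; note the ``moreover'' clause is in fact false for, say, $K\equiv 0$, so this is partly a defect of the stated hypotheses rather than of your proof. In short: (i), (ii) are sound and elegantly different, and (iii) as you wrote it proves a slightly narrower statement; the gap is removable by switching to the paper's nearly‑minimizing‑sequence integration argument.
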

\begin{proof}
$\rm(i)$ We follow \cite[Proposition 2.4]{HSV08}. Note first that replacing $u(t,x)$ by $e^{rt}u(t,x)$ for sufficiently large $r>0$, we may assume, without loss of generality, that $a(t,x)>0$ for all $(t,x)\in\R\times\R$.

Set $K_{0}:=\sup_{x\in\R}\int_{\R}K(x,y)dy<\infty$, $a_{0}:=\sup_{(t,x)\in\R\times\R}a(t,x)$ and $\tau:=\frac{1}{2(K_{0}+a_{0})}$. Suppose there exists $(t_{0},x_{0})\in[0,\tau]\times\R$ such that $u(t_{0},x_{0})<0$. Then, by the assumption, there exists $(t_{1},x_{1})\in\Om_{0,\tau}:=\{(t,x)\in\R\times\R|x>X(t),\,\,t\in(0,\tau]\}$ such that
\begin{equation*}
u(t_{1},x_{1})=\min_{(t,x)\in\Om_{0,\tau}}u(t,x)<0.
\end{equation*}

Define
\begin{equation*}
s_{1}:=\max\{t\in[0,t_{1}]|u(t,x_{1})\geq0\}.
\end{equation*}
By continuity of $u(t,x)$, $s_{1}<t_{1}$ and  $u(s_{1},x_{1})\geq0$. Moreover, by the definition of $s_{1}$, we see that $x_{1}>X(t)$ for $t\in(s_{1},t_{1}]$. In particular, we have
\begin{equation*}
u_{t}(t,x_{1})\geq \int_{\R}K(x_{1},y)u(t,y)dy+a(t,x_{1})u(t,x_{1}),\quad t\in(s_{1},t_{1}].
\end{equation*}
Integrating the above inequality with respect to $t$ from $s_{1}$ to $t_{1}$, we conclude from the facts that $u(t,x)\geq0$ for $x\leq X(t)$ and $u(t_{1},x_{1})<0$ that
\begin{equation*}
\begin{split}
u(t_{1},x_{1})-u(s_{1},x_{1})&\geq\int_{s_{1}}^{t_{1}}\int_{\R}K(x_{1},y)u(t,y)dydt+\int_{s_{1}}^{t_{1}}a(t,x_{1})u(t,x_{1})dt\\
&\geq\int_{s_{1}}^{t_{1}}\int_{X(t)}^{\infty}K(x_{1},y)u(t,y)dydt+\int_{s_{1}}^{t_{1}}a(t,x_{1})u(t,x_{1})dt\\
&\geq u(t_{1},x_{1})\bigg[\int_{s_{1}}^{t_{1}}\int_{X(t)}^{\infty}K(x_{1},y)dydt+\int_{s_{1}}^{t_{1}}a(t,x_{1})dt\bigg]\\
&\geq u(t_{1},x_{1})(K_{0}+a_{0})(t_{1}-s_{1})\\
&\geq u(t_{1},x_{1})(K_{0}+a_{0})\tau,
\end{split}
\end{equation*}
which implies that $[1-(K_{0}+a_{0})\tau]u(t_{1},x_{1})\geq u(s_{1},x_{1})\geq0$. It then follows from the choice of $\tau$ that $u(t_{1},x_{1})\geq0$. It's a contradiction. Thus, $u(t,x)\geq0$ for $(t,x)\in[0,\tau]\times\R$. Repeating the above arguments with initial times $\tau,2\tau,3\tau,\dots$, we find the result.

$\rm(ii)$ It can be proved by the similar arguments as in $\rm(i)$.

$\rm(iii)$ It follows from the arguments in \cite[Propositions 2.1 and 2.2]{ShZh10}.
\end{proof}


\bibliographystyle{amsplain}

\begin{thebibliography}{10}

\bibitem{AlBaCh99} N. Alikakos, P. W. Bates and X. Chen, Periodic traveling waves and locating oscillating patterns in multidimensional domains. \textit{Trans. Amer. Math. Soc.} 351 (1999), no. 7, 2777-2805.


\bibitem{ArWe75} D. G. Aronson and  H. F. Weinberger, Nonlinear diffusion in population genetics, combustion, and nerve pulse propagation. \textit{Lecture Notes in Math.}, Vol. 446, Springer, Berlin, 1975.

\bibitem{ArWe78} D. G. Aronson and  H. F. Weinberger, Multidimensional nonlinear diffusion arising in population genetics. \textit{Adv. in Math.} 30 (1978), no. 1, 33-76.

\bibitem{BaFiReWa97} P. W. Bates, P. C. Fife, X. Ren and X. Wang, Traveling waves in a convolution model for phase transitions. \textit{Arch. Rational Mech. Anal.} 138 (1997), no. 2, 105-136.

\bibitem{BCV14} H. Berestycki, J. Coville and H.-H. Vo, Persistence criteria for populations with non-local dispersion.  \url{http://arxiv.org/abs/1406.6346}.

\bibitem{BeHa02} H. Berestycki and F. Hamel, Front propagation in periodic excitable media. \textit{Comm. Pure Appl. Math.} 55 (2002), no. 8, 949-1032.

\bibitem{BeHa07} H. Berestycki and F. Hamel, Generalized travelling waves for reaction-diffusion equations. Perspectives in nonlinear partial differential equations, 101-123, \textit{Contemp. Math.}, 446, Amer. Math. Soc., Providence, RI, 2007.

\bibitem{BeHa12} H. Berestycki and F. Hamel, Generalized transition waves and their properties. \textit{Comm. Pure Appl. Math.} 65 (2012), no. 5, 592-648.





\bibitem{CaCh04} J. Carr and A. Chmaj, Uniqueness of travelling waves for nonlocal monostable equations. \textit{Proc. Amer. Math. Soc.} 132 (2004), no. 8, 2433-2439.


\bibitem{Ch97} X. Chen, Existence, uniqueness, and asymptotic stability of traveling waves in nonlocal evolution equations. \textit{Adv. Differential Equations} 2 (1997), no. 1, 125-160.



\bibitem{Cov-thesis} J. Coville, \'{E}quations de r\'{e}action-diffusion non-locale. PhD thesis.

\bibitem{CoDu05} J. Coville and L. Dupaigne, Propagation speed of travelling fronts in non local reaction-diffusion equations. \textit{Nonlinear Anal.} 60 (2005), no. 5, 797-819.

\bibitem{CoDu07} J. Coville and L. Dupaigne, On a non-local equation arising in population dynamics. \textit{Proc. Roy. Soc. Edinburgh Sect. A} 137 (2007), no. 4, 727-755.

\bibitem{CDM13} J. Coville, J. D\'{a}vila and S. Mart\'{i}nez, Pulsating fronts for nonlocal dispersion and KPP nonlinearity. \textit{Ann. Inst. H. Poincar\'{e} Anal. Non Lin\'{e}aire} 30 (2013), no. 2, 179-223.

\bibitem{DiHaZh14} W. Ding, F. Hamel and X.-Q. Zhao, Bistable pulsating fronts for reaction-diffusion equations in a periodic habitat. \url{http://arxiv.org/abs/1408.0723}.

\bibitem{DiLi15} W. Ding and X. Liang, Principal eigenvalues of generalized convolution operators on the circle and spreading speeds of noncompact evolution systems in periodic media, \textit{SIAM J. Math. Anal.}, 47 (2015), no.1, 855-896.

\bibitem{Fish37} R. Fisher, The wave of advance of advantageous genes, \textit{Ann. of Eugenics} 7 (1937) 335-369.

\bibitem{Fi03} P. Fife, Some nonclassical trends in parabolic and parabolic-like evolutions. \textit{Trends in nonlinear analysis}, 153-191, Springer, Berlin, 2003.

\bibitem{FiMc77} P. C. Fife and J. B. McLeod, The approach of solutions of nonlinear diffusion equations to travelling front solutions. \textit{Arch. Ration. Mech. Anal.} 65 (1977), no. 4, 335-361.

\bibitem{FiMc80} P. C. Fife and J. B. McLeod, A phase plane discussion of convergence to travelling fronts for nonlinear diffusion. \textit{Arch. Rational Mech. Anal.} 75 (1980/81), no. 4, 281-314.



\bibitem{HSV08}  V. Hutson, W. Shen and G. T. Vickers, Spectral theory for nonlocal dispersal with periodic or almost-periodic time dependence. \textit{Rocky Mountain J. Math.} 38 (2008), no. 4, 1147-1175.

\bibitem{Kam76} Y. Kametaka, On the nonlinear diffusion equation of Kolmogorov-Petrovskii-Piskunov type. \textit{Osaka J. Math.} 13 (1976), no. 1, 11-66.





\bibitem{KPP37} A. Kolmogorov, I. Petrowsky, N. Piscunov, Study of the diffusion equation  with growth of the quantity of matter and its application to a biology problem, \textit{Bjul. Moskovskogo Gos. Univ.} 1 (1937) 1-26.

\bibitem{KoSh14} L. Kong and W. Shen, Liouville type property and spreading speeds of KPP equations in periodic media with localized spatial inhomogeneity. \textit{J. Dynam. Differential Equations} 26 (2014), no. 1, 181-215.

\bibitem{LiZh1} X. Liang and X.-Q. Zhao,
 Asymptotic speeds of spread and traveling waves for monotone semiflows with applications, \textit{Comm. Pure Appl. Math.}, 60 (2007),  no. 1, pp. 1-40.

\bibitem{LiZh2} X. Liang and X.-Q. Zhao, Spreading speeds and traveling waves for abstract monostable evolution systems, \textit{J. Funct. Anal.}, 259 (2010), pp. 857-903.


\bibitem{LiZl14} T. Lim and A. Zlato\v{s}, Transition fronts for inhomogeneous Fisher-KPP reactions and non-local diffusion, preprint.\url{http://www.math.wisc.edu/~zlatos/Papers/nonlocal_KPP.pdf}.

\bibitem{MeRoSi10} A. Mellet, J.-M. Roquejoffre and Y. Sire, Generalized fronts for one-dimensional reaction-diffusion equations. \textit{Discrete Contin. Dyn. Syst.} 26 (2010), no. 1, 303-312.

\bibitem{MNRR09} A. Mellet, J. Nolen, J.-M. Roquejoffre and L. Ryzhik, Stability of generalized transition fronts. \textit{Comm. Partial Differential Equations} 34 (2009), no. 4-6, 521-552.


\bibitem{Na09} G. Nadin, Traveling fronts in space-time periodic media. \textit{J. Math. Pures Appl. (9)} 92 (2009), no. 3, 232-262.

\bibitem{Na10} G. Nadin, Existence and uniqueness of the solution of a space-time periodic reaction-diffusion equation. \textit{J. Differential Equations} 249 (2010), no. 6, 1288-1304.

\bibitem{Na14} G. Nadin, Critical travelling waves for general heterogeneous one-dimensional reaction-diffusion equations, \textit{Ann. Inst. H. Poincar\'{e} Anal. Non Lin\'{e}aire}. DOI: 10.1016/j.anihpc.2014.03.007.

\bibitem{NaRo12} G. Nadin and L. Rossi, Propagation phenomena for time heterogeneous KPP reaction-diffusion equations. \textit{J. Math. Pures Appl. (9)} 98 (2012), no. 6, 633-653.

\bibitem{NaRo14} G. Nadin and L. Rossi, Transition waves for Fisher-KPP equations with general time-heterogeneous and space-periodic coeffcients, preprint. \url{https://www.ljll.math.upmc.fr/~nadin/nr2-final.pdf}.

\bibitem{NoRy09} J. Nolen and L. Ryzhik, Traveling waves in a one-dimensional heterogeneous medium. \textit{Ann. Inst. H. Poincar\'{e} Anal. Non Lin\'{e}aire} 26 (2009), no. 3, 1021-1047.

\bibitem{NRRZ12} J. Nolen, J.-M. Roquejoffre, L. Ryzhik and A. Zlato\v{s}, Existence and non-existence of Fisher-KPP transition fronts. \textit{Arch. Ration. Mech. Anal.} 203 (2012), no. 1, 217-246.

\bibitem{Paz} A. Pazy, \textit{Semigroups of Linear Operators and
Applications to Partial Differential Equations}, Springer-Verlag New York Berlin Heidelberg Tokyo, 1983.

\bibitem{RaShZh} N. Rawal, W. Shen and A. Zhang, Spreading speeds and traveling waves of nonlocal monostable equations
 in time and space periodic habitats, \textit{Discrete Contin. Dyn. Syst.}, to appear.


\bibitem{Sc80} K. Schumacher, Traveling-front solutions for integro-differential equations. I, \textit{J. Reine Angew. Math.} 316 (1980), 54-70.

\bibitem{Sh99-1} W. Shen, Travelling waves in time almost periodic structures governed by bistable nonlinearities. I. Stability and uniqueness. \textit{J. Differential Equations} 159 (1999), no. 1, 1-54.

\bibitem{Sh99-2} W. Shen, Travelling waves in time almost periodic structures governed by bistable nonlinearities. II. Existence. \textit{J. Differential Equations} 159 (1999), no. 1, 55-101.

\bibitem{Sh04} W. Shen, Traveling waves in diffusive random media. \textit{J. Dynam. Differential Equations} 16 (2004), no. 4, 1011-1060.

\bibitem{Sh06} W. Shen, Traveling waves in time dependent bistable equations. \textit{Differential Integral Equations} 19 (2006), no. 3, 241-278.

\bibitem{Sh11} W. Shen, Existence, uniqueness, and stability of generalized traveling waves in time dependent monostable equations. \textit{J. Dynam. Differential Equations} 23 (2011), no. 1, 1-44.

\bibitem{Sh11-1} W. Shen, Existence of generalized traveling waves in time recurrent and space periodic monostable equations. \textit{J. Appl. Anal. Comput.} 1 (2011), no. 1, 69-93.

\bibitem{ShSh14} W. Shen and Z. Shen, Transition fronts in time heterogeneous and random media of ignition type.\url{http://arxiv.org/abs/1407.7579}.

\bibitem{ShSh14-1} W. Shen and Z. Shen, Stability, uniqueness and recurrence of generalized traveling waves in time heterogeneous media of ignition type. arxiv:1408.3848. \url{http://arxiv.org/abs/1408.3848}.

\bibitem{ShZh10} W. Shen and A. Zhang, Spreading speeds for monostable equations with nonlocal dispersal in space periodic habitats. \textit{J. Differential Equations} 249 (2010), no. 4, 747-795.

\bibitem{ShZh12-1} W. Shen and A. Zhang, Stationary solutions and spreading speeds of nonlocal monostable equations in space periodic habitats. \textit{Proc. Amer. Math. Soc.} 140 (2012), no. 5, 1681-1696.

\bibitem{ShZh12-2} W. Shen and A. Zhang, Traveling wave solutions of spatially periodic nonlocal monostable equations. \textit{Comm. Appl. Nonlinear Anal.} 19 (2012), no. 3, 73-101.


\bibitem{Uch78} K. Uchiyama, The behavior of solutions of some nonlinear diffusion equations for large time. \textit{J. Math. Kyoto Univ.} 18 (1978), no. 3, 453-508.


\bibitem{We02} H. Weinberger, On spreading speeds and traveling waves for growth and migration models in a periodic habitat. \textit{J. Math. Biol.} 45 (2002), no. 6, 511-548.




\bibitem{Xi00} J. Xin, Front propagation in heterogeneous media. \textit{SIAM Rev.} 42 (2000), no. 2, 161-230.


\bibitem{Zl12} A. Zlato\v{s}, Transition fronts in inhomogeneous Fisher-KPP reaction-diffusion equations. \textit{J. Math. Pures Appl. (9)} 98 (2012), no. 1, 89-102.

\bibitem{Zl13} A. Zlato\v{s}, Generalized traveling waves in disordered media: existence, uniqueness, and stability. \textit{Arch. Ration. Mech. Anal.} 208 (2013), no. 2, 447-480.
\end{thebibliography}

\end{document}